\renewcommand{\boldsymbol}[1]{\pmb{#1}} % use this with the package mathptmx
\newcommand{\satop}[2]{\stackrel{\scriptstyle{#1}}{\scriptstyle{#2}}}
\newcommand{\bsDelta}{{\boldsymbol{\Delta}}}
\newcommand{\bsgamma}{{\boldsymbol{\gamma}}}
\newcommand{\bszero}{{\boldsymbol{0}}}
\newcommand{\bse}{{\boldsymbol{e}}}
\newcommand{\bsx}{{\boldsymbol{x}}}
\newcommand{\bsy}{{\boldsymbol{y}}}
\newcommand{\bsz}{{\boldsymbol{z}}}
\newcommand{\bsnu}{{\boldsymbol{\nu}}}
\newcommand{\matrB}{{\boldsymbol B}}
\newcommand{\rd}{\mathrm{d}}
\newcommand{\bbA}{\mathbb{A}}
\newcommand{\bbB}{\mathbb{B}}
\newcommand{\bbR}{\mathbb{R}}
\newcommand{\bbZ}{\mathbb{Z}}
\newcommand{\bbN}{\mathbb{N}}
\newcommand{\bbE}{\mathbb{E}}
\newcommand{\calO}{\mathcal{O}}
\newcommand{\calI}{\mathcal{I}}
\newcommand{\calP}{\mathcal{P}}
\newcommand{\calW}{\mathcal{W}}
\newcommand{\setu}{\mathrm{\mathfrak{u}}}
\newcommand{\setv}{\mathrm{\mathfrak{v}}}
\newcommand{\setw}{\mathrm{\mathfrak{w}}}
\newcommand{\KL}{Karhunen-Lo\`eve }
\newcommand{\indx}{{\mathfrak F}}
\newcommand{\supp}{{\mathrm{supp}}}
\newcommand{\mask}[1]{{}}
\definecolor{darkred}{RGB}{139,0,0}
\definecolor{darkgreen}{RGB}{0,100,0}
\definecolor{darkmagenta}{RGB}{139,0,139}
\def\cI{{\cal I}}
\def\cT{{\cal T}}
\def\indx{{\mathfrak F}}
\newcommand{\be}{\begin{equation}}
\newcommand{\ee}{\end{equation}}
\newcommand{\bea}{\begin{eqnarray}}
\newcommand{\eea}{\end{eqnarray}}
\newcommand{\beas}{\begin{eqnarray*}}
\newcommand{\eeas}{\end{eqnarray*}}
\begin{document}
\title{Multi-level quasi-Monte Carlo Finite Element methods for a class of
elliptic PDEs with random coefficients}
%\subtitle{Do you have a subtitle?\\ If so, write it here}
\titlerunning{Multi-level QMC FE Methods for Elliptic PDEs with Random
Coefficients} % if too long for running head
\author{Frances~Y.~Kuo \and Christoph~Schwab \and Ian~H.~Sloan}
%\authorrunning{Short form of author list} % if too long for running head
\institute{Frances~Y.~Kuo \at
              School of Mathematics and Statistics,
              University of New South Wales, Sydney NSW 2052, Australia \\
              %Tel.: +123-45-678910\\
              %Fax: +123-45-678910\\
              \email{f.kuo@unsw.edu.au}
           \and
           Christoph~Schwab \at
              Seminar for Applied Mathematics, ETH Z\"urich, ETH Zentrum,
              HG G57.1, CH8092 Z\"urich, Switzerland \\
              \email{christoph.schwab@sam.math.ethz.ch}
           \and
           Ian~H.~Sloan \at
              School of Mathematics and Statistics,
              University of New South Wales, Sydney NSW 2052, Australia \\
              \email{i.sloan@unsw.edu.au}
}
\date{%Received: date / Accepted: date
      May 2014}  %%% 15 May 2014
% The correct dates will be entered by the editor
\maketitle
\begin{abstract}
Quasi-Monte Carlo (QMC) methods are applied to multi-level Finite Element
(FE) discretizations of elliptic partial differential equations (PDEs)
with a random coefficient. The representation of the random coefficient is
assumed to require a countably infinite number of terms.

The multi-level FE discretizations are combined with families of QMC
methods (specifically, randomly shifted lattice rules) to estimate
expected values of linear functionals of the solution, as in
\cite{GKNSSS,GKNSS11,KSS1} in the single level setting. Here, the expected
value is considered as an infinite-dimensional integral in the parameter
space corresponding to the randomness induced by the random coefficient.
In this paper we study the same model as in \cite{KSS1}. The error
analysis of \cite{KSS1} is generalized to a multi-level scheme, with the
number of QMC points depending on the discretization level, and with a
level-dependent dimension truncation strategy. In some scenarios, it is
shown that the overall error of the expected value of the functionals of
the solution (i.e., the root-mean-square error averaged over all shifts)
is of order $\calO(h^2)$, where $h$ is the finest FE mesh width, or
$\calO(N^{-1+\delta})$ for arbitrary $\delta>0$, where $N$ denotes the
maximal number of QMC sampling points in the parameter space. For these
scenarios, the total work for all PDE solves in the multi-level QMC FE
method is shown to be essentially of the order of {\em one single PDE
solve at the finest FE discretization level}, for spatial dimension $d=2$
with linear elements.

The analysis exploits regularity of the parametric solution with respect
to both the physical variables (the variables in the physical domain) and
the parametric variables (the parameters corresponding to randomness). As
in \cite{KSS1}, families of QMC rules with ``POD weights'' (``product and
order dependent weights") which quantify the relative importance of
subsets of the variables are found to be natural for proving convergence
rates of QMC errors that are independent of the number of parametric
variables. Our POD weights for the multi-level QMC FE algorithm are
different from those for the single level algorithm in \cite{KSS1}.
\keywords{Multi-level \and
          Quasi-Monte Carlo methods \and
          Infinite dimensional integration \and
          Elliptic partial differential equations with random coefficients \and
          Finite element methods}
% \PACS{PACS code1 \and PACS code2 \and more}
\subclass{65D30 \and 65D32 \and 65N30}
\end{abstract}
%65D30 Numerical integration
%65D32 Quadrature and cubature formulas
%65N30 Finite elements, Rayleigh-Ritz and Galerkin methods, finite methods
%\markboth{F.~Y. KUO, CH.~SCHWAB AND I.~H. SLOAN}{ML QMC FE METHODS FOR
%ELLIPTIC PDES WITH RANDOM COEFFICIENTS}
% ---------------------------------------------------------------------
% ---------------------------------------------------------------------
% ---------------------------------------------------------------------
% ---------------------------------------------------------------------
% ---------------------------------------------------------------------
% ---------------------------------------------------------------------
\section{Introduction} \label{sec:intro}
This paper is a sequel to our work \cite{KSS1}, where we analyzed
theoretically the application of quasi-Monte Carlo (QMC) methods combined
with finite element (FE) methods for a scalar, second order elliptic
partial differential equation (PDE) with random diffusion. The diffusion
is assumed to be given as an infinite series with random coefficients. As
in \cite{KSS1}, we consider the model parametric elliptic Dirichlet
problem
\begin{equation}\label{eq:PDE1}
 - \nabla \cdot (a(\bsx,\bsy)\,\nabla u(\bsx,\bsy))
 \,=\, f(\bsx) \quad\mbox{in}\quad D \subset \bbR^d\;,
 \quad
 u(\bsx,\bsy) \,=\, 0 \quad\mbox{on}\quad \partial D\;,
\end{equation}
for $D\subset\bbR^d$ a bounded domain with a Lipschitz boundary $\partial
D$, where $d=1,2$, or $3$ is assumed given and fixed (we do not track the
dependence of constants on $d$ in this work). In \eqref{eq:PDE1}, the
gradients are understood to be with respect to the physical variable
$\bsx$ which belongs to $D$, and the parameter vector $\bsy = (y_j)_{j\geq
1}$ consists of a countable number of parameters $y_j$ which we assume, as
in \cite{KSS1}, to be i.i.d.\ uniformly distributed. Hence, we assume
\[
  \bsy\in [-\tfrac{1}{2},\tfrac{1}{2}]^\bbN \,=:\, U\;.
\]
The parameter $\bsy$ is thus distributed on $U$ with the uniform
probability measure $\mu(\rd\bsy) = \bigotimes_{j\geq 1} \rd y_j =
\rd\bsy$. This simple probability model readily lends itself to treatment
by QMC integration.

The parametric diffusion coefficient $a(\bsx,\bsy)$ in \eqref{eq:PDE1} is
assumed to depend linearly on the parameters $y_j$ as follows:
\begin{equation} \label{eq:defaxy}
  a(\bsx,\bsy)
  \,=\, \bar{a}(\bsx) + \sum_{j\geq 1} y_j\, \psi_j(\bsx)\;,
  \qquad
  \bsx\in D\;, \quad\bsy\in U\;.
\end{equation}
The $\psi_j$ can arise from either the eigensystem of a covariance
operator (see, e.g. \cite{ST06}), or other suitable function systems in
$L^2(D)$. As in \cite{KSS1} we impose a number of assumptions on $\bar{a}$
and $\psi_j$ as well as on the domain~$D$:
%\smallskip
%
\begin{description}
\item [(\textbf{A1})]%
We have $\bar{a} \in L^\infty(D)$ and $\sum_{j\geq
1}\|\psi_j\|_{L^\infty(D)}
< \infty$. %\smallskip %
\item [(\textbf{A2})]%
There exist $a_{\max}$ and $a_{\min}$ such that $0 < a_{\min}\le
a(\bsx,\bsy)\le a_{\max}$ for all $\bsx\in D$ and $\bsy\in
U$. %\smallskip %
\item [(\textbf{A3})]%
There exists $p\in (0,1)$ such that $\sum_{j\ge 1} \|\psi_j\|^p_{L^\infty(D)}< \infty$.
\item [(\textbf{A4})]%
With the norm $\|v\|_{W^{1,\infty}(D)} :=
\max\{\|v\|_{L^\infty(D)},\|\nabla v\|_{L^\infty(D)}\}$,
we have
 $\bar{a} \in W^{1,\infty}(D)$ and $\sum_{j\geq
1}\|\psi_j\|_{W^{1,\infty}(D)}
< \infty$.
\item [(\textbf{A5})]%
The sequence $\psi_j$ is ordered so that
$\|\psi_1\|_{L^\infty(D)}\ge \|\psi_2\|_{L^\infty(D)} \ge\cdots$.
\item [(\textbf{A6})]%
The domain $D$ is a convex and bounded polyhedron with plane faces.
\end{description} %
In this paper we impose one additional assumption:
\begin{description}
\item [(\textbf{A7})]%
For $p$ as in (\textbf{A3}), there exists $q \in [p,1]$ such that
$\sum_{j\ge 1} \|\psi_j\|^q_{W^{1,\infty}(D)}< \infty$. %\smallskip %
\end{description}
We now briefly comment on each assumption. Assumption (\textbf{A1})
ensures that the coefficient $a(\bsx,\bsy)$ is well-defined for all
parameters $\bsy\in U$. Assumption (\textbf{A2}) yields the strong
ellipticity needed for the standard FE analysis. Assumption (\textbf{A3})
is stronger than the second part of Assumption (\textbf{A1}). This
assumption implies decay of the fluctuation coefficients $\psi_j$, with
faster decay for smaller $p$. The value of $p$ determines the convergence
rate in the previous paper \cite{KSS1}. Assumption (\textbf{A4})
guarantees that the FE solutions converge to the solution of
\eqref{eq:PDE1}. Assumption (\textbf{A5}) allows the truncation of the
infinite sum in \eqref{eq:defaxy} to, say, $s$ terms. This assumption is
not needed in this paper when the functions $\psi_j$ satisfy an
orthogonality property in relation to the FE spaces, see \S
\ref{ssec:ortho} below. Assumption (\textbf{A6}) only simplifies the FE
analysis and can be substantially relaxed. Finally, Assumption
(\textbf{A7}) is often stronger than Assumptions (\textbf{A3}) and
(\textbf{A4}). The value of $q\in [p,1]$ as well as that of $p\in (0,1)$
will determine the QMC convergence rates to be shown in this paper.

Our aim in this paper is to extend the QMC FE algorithm of \cite{KSS1} for
the efficient computation of expected values of continuous linear
functionals of the solution of \eqref{eq:PDE1} to a \emph{multi-level}
setting so that the overall computational cost is substantially reduced.
Suppose the continuous linear functional is $G: H_0^1(D) \mapsto \bbR$
(later we may impose stronger regularity assumption on $G$, e.g., $G\in
L^2(D)$). We are interested in approximating the integral
\begin{align} \label{eq:integral}
  I(G(u))
  &\,:=\, \int_U G(u(\cdot,\bsy)) \,\rd\bsy
  \,:=\, \lim_{s\to\infty} I_s(G(u))\;,
\end{align}
where
\begin{align*}
  I_s(G(u)) \,:=\,
  \int_{[-\frac{1}{2},\frac{1}{2}]^s}
  G(u(\cdot,(y_1,\ldots,y_s,0,0,\ldots)))\,\rd y_1\cdots\rd y_s\;. \nonumber
\end{align*}
The (single level) strategy in \cite{KSS1} was to (i) truncate the
infinite sum in the expansion of the coefficient to $s$ terms, (ii)
approximate the solution of the truncated PDE problem using a FE method
with mesh width $h$, and (iii) approximate the integral using a QMC method
(an equal-weight quadrature rule) with $N$ points in $s$ dimensions. The
QMC FE algorithm can therefore be expressed as
\[ %\begin{equation} \label{eq:SL}
  Q_{s,N}(G(u^s_h))
  \,:=\, \frac{1}{N} \sum_{i=1}^N G\big(u^s_h(\cdot,\bsy^{(i)})\big)\;,
\] %\end{equation}
where $u^s_h$ denotes the FE solution of the truncated PDE problem, and
$\bsy^{(1)}, \ldots, \bsy^{(N)}$ are QMC sample points which are
judiciously chosen from the $s$-dimensional unit cube
$[-\tfrac{1}{2},\tfrac{1}{2}]^s$. More precisely, the QMC rules considered
in \cite{KSS1} are \emph{randomly shifted lattice rules}; more details
will be given in the next section. It was established in \cite{KSS1} that
the root-mean-square of the error $I(G(u)) - Q_{s,N}(G(u^s_h))$ over all
random shifts is a \emph{sum} of three parts: a truncation error, a QMC
error, and a FE error. For example, in the particular case where
Assumption (\textbf{A3}) holds with $p=2/3$ and $f,G\in L^2(D)$, it was
shown that the three additive parts of the error are of orders
$\calO(s^{-1})$, $\calO(N^{-1+\delta})$, and
$\calO(h^2)=\calO(M_h^{-2/d})$, respectively, where $M_h$ is the number of
FE nodes and $d$ is the spatial dimension. Assuming the availability of a
linear complexity FE solver in the domain $D$ (e.g., a multigrid method),
the overall cost of the (single level) QMC FE algorithm is $\calO(s\,N\,
M_h)$. There, as in the present paper, we assume that the functions
$\psi_j$ and their (piecewise-constant) gradients are explicitly known,
and that integration of any FE basis functions over a single element in
the FE mesh is available at unit cost. In effect, we assume that the
entries of the FE stiffness matrix can be computed exactly. The assessment
of the impact of quadrature errors in the FE method is a classical
problem, which is well studied and covered in texts, such as the monograph
of Ciarlet \cite{Ciarlet}.

The purpose of the present paper is the design and the error-versus-cost
analysis of a \emph{multi-level} extension of the single level algorithm
developed in \cite{KSS1}.
The multi-level algorithm takes the form
\begin{equation} \label{eq:ML}
  Q_*^L(G(u))
  \,:=\, \sum_{\ell=0}^L Q_{s_\ell,N_\ell}
  \Big(G \big(u_{h_\ell}^{s_\ell} - u_{h_{\ell-1}}^{s_{\ell-1}}\big)\Big)\;,
\end{equation}
where $\{s_\ell\}_{\ell\ge 0}$ is a nondecreasing sequence of truncation
dimensions, $u_{h_\ell}^{s_\ell}$ denotes the FE approximation with mesh
width $h_\ell$ of the PDE problem with parametric input \eqref{eq:defaxy}
truncated at $s_\ell$ terms, with the convention
$u_{h_{-1}}^{s_{-1}}\equiv 0$, and $Q_{s_\ell,N_\ell}$ denotes the
(randomly shifted) QMC quadrature rule with $N_\ell$ points in $s_\ell$
dimensions. (For the practical form of the quadrature rule, including
randomization, see \eqref{eq:prac} below.) Assuming again the availability
of a linear complexity FE solver in the domain $D$, the overall cost of
this multi-level QMC FE algorithm is therefore $\calO(\sum_{\ell=0}^L
s_\ell\,N_\ell\,M_{h_\ell})$ operations. Again we use randomly shifted
lattice rules, and we show that $s_\ell$, $N_\ell$, and $M_{h_\ell}$ enter
the root-mean-square of the error $I(G(u)) - Q_*^L(G(u))$ over all random
shifts in a \emph{combined additive and multiplicative manner}. Upon
choosing $s_\ell$ and $N_\ell$ in relation to $h_\ell$ appropriately at
each level $\ell$, we arrive at a dramatically reduced overall cost
compared to the single level algorithm.

The general concept of multi-level algorithms was first introduced by
Heinrich \cite{Hei01} and reinvented by Giles \cite{Gil07,Gil08}. Since
then the concept has been applied in many areas including high dimensional
integration, stochastic differential equations, and several types of PDEs
with random coefficients. Most of these works used multi-level Monte Carlo
(MC) algorithms, while few papers considered multi-level QMC algorithms.
The multi-level QMC FE algorithm \eqref{eq:ML} proposed and analyzed here
differs in several core aspects from the abstract multi-level QMC
framework proposed in \cite{Gne11,HMNR11}. It also differs from the
multi-level MC approach which has recently been developed for elliptic
problems with random input data of the general form \eqref{eq:PDE1} in
\cite{BSZ,CST,CGST,SchwabGittelsonActNum11,TSGU}. The model considered
here, as in \cite{KSS1}, is infinite-dimensional. Previous treatments of
infinite-dimensional quadrature include \cite{Gne11,KSWW10b,HMNR11} with
QMC methods, \cite{NHMR10} with MC methods, and \cite{PW11} with Smolyak
(or sparse-grid) quadrature.

There is an important special case where the functions $\psi_j$ satisfy an
orthogonality property in relation to the FE spaces, see
\eqref{eq:orthprop} ahead. In this case there is \emph{no dimension
truncation error at any level}, that is, with $s_\ell$ chosen in an
appropriate way we have $u_{h_\ell}^{s_\ell}= u_{h_\ell}$. Furthermore,
due to the special structure of the expansion of the coefficient
$a(\bsx,\bsy)$, the overall cost is only $\calO(\sum_{\ell=0}^L
N_\ell\,M_{h_\ell}\,\log(M_{h_\ell}))$ operations. To have this
orthogonality property we need \emph{multiresolution} function systems;
examples are given in \S\ref{ssec:ortho}. We emphasize that the
eigenfunction system of the covariance operator does \emph{not} have this
property.

One of the main findings of the present paper is that the error analysis
of the multi-level QMC FE algorithm requires \emph{smoothness of the
parametric solution simultaneously with respect to the spatial variable
$\bsx$ and to the parametric variable~$\bsy$}. Another key point is that
we require decay of stronger norms of the fluctuation coefficients
$\psi_j$, see Assumption (\textbf{A7}). For the multi-level QMC FE
algorithm, the convergence rate will be determined by both the values of
$q$ in \textbf{(A7)} and $p$ in \textbf{(A3)}, rather than just the value
of $p$ as for the single level algorithm in~\cite{KSS1}.
As in most modern analyses of QMC integration in high dimensions, we use
parameters $\gamma_\setu$, known as \emph{weights}, to describe the
relative importance of the subset of the variables with labels in the
finite subset $\setu\subset\mathbb{N}$. (These weights are to be
distinguished from quadrature weights in, e.g., Gaussian quadrature
formulas.) In \cite{KSS1} the weights were chosen to minimize a certain
upper bound on the product of the \emph{worst case error} and the norm in
the function space, yielding a special form of weights called ``POD
weights'', which stand for ``product and order dependent weights":
\begin{equation} \label{eq:PODweight}
  \gamma_\setu \,=\, \Gamma_{|\setu|}\,\prod_{j\in\setu}\gamma_j\;,
\end{equation}
where $|\setu|$ denotes the cardinality (or the ``order") of the set
$\setu$. These weights are then determined by the two sequences: by
$\Gamma_0 = 1$, $\Gamma_1, \Gamma_2, \Gamma_3, \ldots$ and by
$\gamma_1,\gamma_2,\gamma_3,\ldots$. The error bound obtained in the
present paper is more complicated than the result in \cite{KSS1} due to
the multi-level nature of the algorithm, but we follow the same general
principle for choosing weights. It turns out that the ``optimal" weights
(in the sense of minimizing an upper bound on the overall error) for the
multi-level QMC FE algorithm are again POD weights \eqref{eq:PODweight},
but they are different from the POD weights for the single level algorithm
in \cite{KSS1}. In any case, fast CBC construction algorithms for randomly
shifted lattice rules are available for POD weights, see \cite{DKS13} or
\cite{KSS-survey} for recent surveys, as well as
\cite{SKJ02b,K03,D04,NC06a,NC06b,CKN06,DPW08}.

The outline of this paper is as follows. In \S\ref{sec:review} we
introduce the function spaces used for the analysis and summarize those
results from \cite{KSS1} that are needed for this paper. In
\S\ref{sec:main} we prove the main results required for the error analysis
and combine them to obtain an error bound for the multi-level QMC FE
algorithm. Finally in \S\ref{sec:concl} we give conclusions.
\section{Problem Formulation and Summary of Relevant Results}
\label{sec:review}

\subsection{Function Spaces} \label{ssec:spaces}
First we introduce the function spaces from \cite{KSS1} which
will be used in what follows.
Our variational setting of \eqref{eq:PDE1} is based
on the Sobolev space $V=H^1_0(D)$ and its dual space $V^* = H^{-1}(D)$,
with pivot space $L^2(D)$, and with the norm in $V$ given by
\[
  \|v\|_V \,:=\, \|\nabla v\|_{L^2(D)}\;.
\]
We also consider the Hilbert space with additional regularity with respect
to $\bsx$,
\begin{equation} \label{eq:defZ}
  Z^t \,:=\, \{ v\in V: \Delta v \in H^{-1+t}(D)\}\,,\quad 0 \le t\le 1 \;,
\end{equation}
with the norm
\begin{equation} \label{eq:defnormZ}
  \| v \|_{Z^t} \,:=\,
  \left( \| v \|_{L^2(D)}^2 + \| \Delta v \|_{H^{-1+t}(D)}^2\right)^{1/2}\;,
\end{equation}
where, for $-1\leq r \leq 2$, the $H^r(D)$ norm denotes the homogeneous
$H^r(D)$-norm which is defined in terms of the $L^2(D)$ orthonormalized
eigenfunctions $\varphi_\lambda\in V$ and the eigenvalues $\lambda$ in the
corresponding spectrum $\Sigma$ of the Dirichlet Laplacian in $D$ by
\[ %begin{equation}\label{eq:defDualBrakt}
\| v \|^2_{H^r(D)}
\,:=\,
\sum_{\lambda\in \Sigma} \lambda^r\, | (v,\varphi_\lambda ) |^2
\;.
\] %end{equation}
Here, and in the following, we denote by $(\cdot,\cdot)$ the bilinear form
corresponding to the $L^2(D)$ innerproduct, extended by continuity to the
duality pairing $H^r(D)\times H^{-r}(D)$. Standard elliptic regularity
theory (see, e.g.~\cite{GilbargTrudinger}) yields the inclusion
$Z^t\subset H^{1+t}_{\mathrm{loc}}(D)$, and for convex domains $D$ and for
$t=1$ we have $Z^1=H^2(D)\cap H^1_0(D)$. As already seen in
\S\ref{sec:intro}, we will also make use of the norm
\[ %\begin{equation} \label{eq:defW1inf}
  \|v\|_{W^{1,\infty}(D)} \,:=\,
  \max\{ \| v \|_{L^\infty(D)} , \|\nabla v\|_{L^\infty(D)} \}\;.
\] %\end{equation}

The integrand in \eqref{eq:integral} is $G(u(\cdot,\bsy))$. To analyze QMC
integration for such integrands, we shall need a function space defined
with respect to $\bsy$. Since our multi-level QMC FE algorithm makes use
of the FE solution $u^s_h$ of the truncated PDE problem to $s$ terms, we
consider the \emph{weighted} and \emph{unanchored} Sobolev space
$\calW_{s,\bsgamma}$, which is a Hilbert space containing functions
defined over the $s$-dimensional unit cube $[-\frac{1}{2},\frac{1}{2}]^s$,
with square integrable mixed first derivatives. More precisely, the norm
for $F= G(u^s_h) \in \calW_{s,\bsgamma}$ is given by
\begin{equation} \label{eq:norm}
  \|F\|_{\calW_{s,\bsgamma}}
  \,:=\,
  \left(
  \sum_{\setu\subseteq\{1:s\}}
  \frac{1}{\gamma_\setu}
  \int_{[-\frac{1}{2},\frac{1}{2}]^{|\setu|}}
  \left|
  \int_{[-\frac{1}{2},\frac{1}{2}]^{s-|\setu|}}
  \frac{\partial^{|\setu|}F}{\partial \bsy_\setu}(\bsy_\setu;\bsy_{-\setu})
  \,\rd\bsy_{-\setu}
  \right|^2 \,
  \rd\bsy_\setu
  \right)^{1/2}\;,
\end{equation}
where $\{1:s\}$ is a shorthand notation for the set $\{1,\ldots,s\}$,
$\frac{\partial^{|\setu|}F}{\partial \bsy_\setu}$ denotes the mixed first
derivative with respect to the ``active'' variables $\bsy_\setu =
(y_j)_{j\in\setu}$, and where $\bsy_{-\setu} =
(y_j)_{j\in\{1:s\}\setminus\setu}$ denotes the ``inactive'' variables.
The ``outer'' integration in \eqref{eq:norm} is omitted when $\setu =
\emptyset$, while the ``inner'' integration is omitted when $\setu =
\{1:s\}$.

Weighted spaces were first introduced by Sloan and Wo\'zniakowski in
\cite{SW98}, and by now there are many variants, see e.g.\
\cite{DSWW04,SWW04}. As in \cite{KSS1}, we have taken the cube to be
centered at the origin (rather than the standard unit cube $[0,1]^s$).
Moreover, we have adopted ``general weights'': there is a weight parameter
$\gamma_\setu$ associated with each group of variables $\bsy_\setu =
(y_j)_{j\in\setu}$ with indices belonging to the set $\setu$, with the
convention that $\gamma_\emptyset = 1$. Later we will focus on ``POD
weights", see \eqref{eq:PODweight}. As in \cite{KSS1}, these POD weights
arise naturally from our analysis for the PDE application.

\subsection{Parametric Weak Formulation} \label{ssec:weak}

As in \cite{KSS1}, we consider the following {\em parameter-dependent
weak formulation of the parametric deterministic problem} \eqref{eq:PDE1}:
for $f\in V^*$ and $\bsy\in U$, find
\begin{equation}\label{eq:paramweakprob}
  u(\cdot,\bsy)\in V:\quad b(\bsy;u(\cdot,\bsy),v)
  \,=\, (f,v) \qquad
  \forall v\in V\;,
\end{equation}
where the parametric bilinear form $b(\bsy;w,v)$ is given by
\[ %\begin{equation}\label{eq:bilinear}
  b(\bsy;w,v)
  \,:=\, \int_D a(\bsx,\bsy)\, \nabla w(\bsx) \cdot \nabla v(\bsx) \,\rd\bsx\;,
  \qquad
 \forall w,v\in V\;.
\] %\end{equation}
It follows from Assumption (\textbf{A2}) that the bilinear form is
continuous and coercive on $V\times V$, and we may infer from the
Lax-Milgram Lemma the existence of a unique solution to
\eqref{eq:paramweakprob} satisfying the standard apriori estimate.
Moreover, additional regularity of the solution with respect to $\bsx$ can
be obtained under additional regularity assumptions on $f$ and the
coefficients $a(\cdot,\bsy)$.
\begin{theorem}[{\cite[Theorems 3.1 and 4.1]{KSS1}}] \label{thm:weak}
Under Assumptions \textnormal{(\textbf{A1})} and
\textnormal{(\textbf{A2})}, for every $f\in V^*$ and every $\bsy\in U$,
there exists a unique solution $u(\cdot,\bsy)\in V$ of the parametric weak
problem \eqref{eq:paramweakprob}, which satisfies
\begin{equation}\label{eq:apriori_V}
  \| u(\cdot,\bsy) \|_V \,\le\, \frac{\|f\|_{V^*}}{a_{\min}} \;.
\end{equation}
If, in addition, $f\in H^{-1+t}(D)$ for some $0\le t\le 1$, and if
Assumption \textnormal{(\textbf{A4})} holds, then there exists a constant
$C>0$ such that for every $\bsy\in U$,
\begin{equation}\label{eq:apriori_Z}
  \| u(\cdot,\bsy) \|_{Z^t} \,\le\, C\, \|f\|_{H^{-1+t}(D)} \;,
\end{equation}
with the norm in $Z^t$ defined by \eqref{eq:defnormZ}.
\end{theorem}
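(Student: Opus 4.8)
The plan is to establish the two estimates in Theorem~\ref{thm:weak} via standard arguments from the Lax--Milgram framework and elliptic regularity, adapted to the parametric setting by tracking uniformity in $\bsy\in U$.

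\textbf{Step 1: Well-posedness and the $V$-estimate.} First I would observe that under (\textbf{A1}) the coefficient $a(\cdot,\bsy)$ is well-defined in $L^\infty(D)$ for every $\bsy\in U$, and under (\textbf{A2}) we have $a_{\min}\le a(\bsx,\bsy)\le a_{\max}$ uniformly in $\bsx$ and $\bsy$. Hence for each fixed $\bsy$ the bilinear form $b(\bsy;\cdot,\cdot)$ is bounded on $V\times V$ with constant $a_{\max}$ and coercive with constant $a_{\min}$, since $b(\bsy;v,v)=\int_D a(\bsx,\bsy)|\nabla v|^2\,\rd\bsx \ge a_{\min}\|v\|_V^2$. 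The Lax--Milgram Lemma then yields a unique $u(\cdot,\bsy)\in V$ solving \eqref{eq:paramweakprob}. Testing with $v=u(\cdot,\bsy)$ gives $a_{\min}\|u(\cdot,\bsy)\|_V^2 \le b(\bsy;u(\cdot,\bsy),u(\cdot,\bsy)) = (f,u(\cdot,\bsy)) \le \|f\|_{V^*}\|u(\cdot,\bsy)\|_V$, and dividing by $\|u(\cdot,\bsy)\|_V$ produces \eqref{eq:apriori_V}; the key point is that the constant $1/a_{\min}$ is independent of $\bsy$.

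\textbf{Step 2: The $Z^t$-estimate.} Assume now $f\in H^{-1+t}(D)$ and that (\textbf{A4}) holds, so $a(\cdot,\bsy)\in W^{1,\infty}(D)$ with $\|a(\cdot,\bsy)\|_{W^{1,\infty}(D)}$ bounded uniformly in $\bsy$ by $\|\bar a\|_{W^{1,\infty}(D)}+\sum_{j\ge1}\|\psi_j\|_{W^{1,\infty}(D)}=:\bar A<\infty$. From the PDE in divergence form, $\Delta u(\cdot,\bsy) = -a^{-1}\big(f + \nabla a\cdot\nabla u\big)$ in the distributional sense; I would use this to control $\|\Delta u(\cdot,\bsy)\|_{H^{-1+t}(D)}$. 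For $t=0$ this is immediate since $H^{-1}(D)=V^*$ and $\nabla a\cdot\nabla u\in L^2(D)\hookrightarrow H^{-1}(D)$ with norm bounded by $\bar A\|u\|_V$, while $a^{-1}$ is a bounded multiplier on $H^{-1}(D)$ (using (\textbf{A4}) to control multiplication). For $t=1$, convexity of $D$ (\textbf{A6}) gives the full $H^2$-regularity shift $Z^1=H^2(D)\cap H^1_0(D)$ with an a priori bound $\|u\|_{H^2(D)}\le C(a_{\min},\bar A)\|f\|_{L^2(D)}$; this is the classical result for elliptic equations with Lipschitz coefficients on convex polyhedra, and the crucial fact is again that the constant depends on the coefficient only through $a_{\min}$ and $\bar A$, hence uniformly in $\bsy$. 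The intermediate range $0<t<1$ follows by interpolation between the $t=0$ and $t=1$ estimates. Combining these with the definition \eqref{eq:defnormZ} and the $V$-bound from Step~1 to control $\|u\|_{L^2(D)}\le C\|u\|_V$ yields \eqref{eq:apriori_Z}.

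\textbf{Main obstacle.} The routine parts are Step~1 and the $t=0$ case; the genuine content is the uniform-in-$\bsy$ $H^{1+t}$ regularity shift for $0<t\le1$. The subtlety is twofold: first, one must verify that the constant in the elliptic regularity estimate depends on $a(\cdot,\bsy)$ only through the uniform bounds $a_{\min}$, $a_{\max}$, and $\bar A$ (so that it can be taken uniform over $U$) --- this requires inspecting the proof of the regularity shift, e.g.\ via the difference-quotient method combined with the convexity of $D$, rather than merely quoting a black-box theorem. Second, for $0<t<1$ one needs the interpolation argument to be compatible with the homogeneous fractional norms used in \eqref{eq:defnormZ}, which is where the spectral definition via the Dirichlet Laplacian eigenfunctions is convenient. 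Since this is the direct citation \cite[Theorems 3.1 and 4.1]{KSS1}, I expect the authors simply to invoke those results, but the conceptual heart is ensuring uniformity of the regularity constant across the parameter domain.
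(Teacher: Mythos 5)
This theorem is quoted from \cite{KSS1} and the paper gives no proof of it, so the comparison is against the analogous machinery the paper does display, namely the proof of Theorem~\ref{thm:reg}, whose base case $|\bsnu|=0$ is exactly \eqref{eq:apriori_Z}. Your Step~1 is the standard Lax--Milgram argument and is exactly what is intended. For Step~2, the identity you write down at the outset, $\Delta u = -a^{-1}\bigl(f+\nabla a\cdot\nabla u\bigr)$ (i.e.\ \eqref{eq:nice-id}), is precisely the paper's mechanism, and it already finishes the proof for \emph{every} $t\in[0,1]$ in one stroke: since the $Z^t$-norm \eqref{eq:defnormZ} asks only for $\Delta u\in H^{-1+t}(D)$, one bounds $\|\Delta u\|_{H^{-1+t}(D)}\le a_{\min}^{-1}\bigl(\|f\|_{H^{-1+t}(D)}+C_t\,\|\nabla a\|_{L^\infty(D)}\|u\|_V\bigr)$ using the embedding $L^2(D)\hookrightarrow H^{-1+t}(D)$ with the constant $C_t$ of \eqref{eq:defB}, and then inserts \eqref{eq:apriori_V}. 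No interpolation and no elliptic regularity shift are required.

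The one substantive criticism is that you abandon this direct route at $t=1$ in favour of the classical $H^2$-shift on convex polyhedra and then interpolate for $0<t<1$. That detour proves more than is needed ($u\in H^2(D)$ rather than merely $\Delta u\in L^2(D)$) and, more importantly, imports Assumption (\textbf{A6}), which is \emph{not} among the hypotheses of Theorem~\ref{thm:weak}; the space $Z^t$ is defined via $\Delta$ precisely so that the a~priori bound \eqref{eq:apriori_Z} holds without any convexity of $D$ (convexity enters only later, to identify $Z^1=H^2(D)\cap H^1_0(D)$ for the FE approximation estimates). Your closing remarks about needing to track the dependence of the regularity constant on the coefficient are well taken in spirit, but the ``genuine content'' is not a uniform regularity shift at all --- it is the elementary observation that the right-hand side of the divergence identity is controlled uniformly in $\bsy$ by $a_{\min}$, $C_t$ and $\sup_{\bsz\in U}\|\nabla a(\cdot,\bsz)\|_{L^\infty(D)}$, all of which are finite by (\textbf{A2}) and (\textbf{A4}).
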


\subsection{Dimension Truncation} \label{ssec:dim-trunc}

Next we summarize a result from \cite{KSS1} needed for estimating the
dimension truncation error. Given $s\in\bbN$ and $\bsy\in U$, we observe
that truncating the sum in \eqref{eq:defaxy} at $s$ terms is the same as
anchoring or setting $y_j=0$ for $j>s$. We denote by $u^s(\bsx,\bsy) :=
u(\bsx,(\bsy_{\{1:s\}};\bszero))$ the solution of the parametric weak
problem \eqref{eq:paramweakprob} corresponding to the parametric diffusion
coefficient \eqref{eq:defaxy} when the sum is truncated after $s$ terms.
As observed in \cite{KSS1}, it will be convenient for the regularity
analysis of \eqref{eq:PDE1} and for the QMC error analysis to introduce
\begin{equation} \label{eq:defbj}
 b_j \,:=\, \frac{\| \psi_j \|_{L^\infty(D)}}{a_{\min}} \;,\qquad
 j\ge 1\;.
\end{equation}
\begin{theorem}[{\cite[Theorem 5.1]{KSS1}}] \label{thm:trunc}
Under Assumptions \textnormal{(\textbf{A1})} and
\textnormal{(\textbf{A2})}, for every $f\in V^*$, every $G\in V^*$, every
$\bsy\in U$ and every $s\in\bbN$, the solution
$u^s(\cdot,\bsy)=u(\cdot,(\bsy_{\{1:s\}};0))$ of the truncated parametric
weak problem \eqref{eq:paramweakprob} satisfies, with $b_j$ as defined in
\eqref{eq:defbj},
\[ %\begin{equation}\label{eq:Vdimtrunc}
  \| u(\cdot,\bsy) - u^s(\cdot,\bsy) \|_V
  \,\le\, C\,\frac{\|f\|_{V^*}}{a_{\min}}\sum_{j\ge s+1} b_j
\] %\end{equation}
and
\begin{equation}\label{eq:Idimtrunc}
  |I(G(u))-I_s(G(u))|
  \,\le\, \tilde{C}\,\frac{\|f\|_{V^*}\|G\|_{V^*}}{a_{\min}}
  \bigg(\sum_{j\ge s+1}b_j\bigg)^2
\end{equation}
for some constants $C,\tilde{C}>0$ independent of $s$, $f$ and $G$.
In addition, if Assumptions~\textnormal{(\textbf{A3})} and
\textnormal{(\textbf{A5})} hold, then
\begin{align} \label{eq:stechkin}
  \sum_{j\ge s+1} b_j
  \,\le\,
  \min\left(\frac{1}{1/p-1},1\right)
  \bigg(\sum_{j\ge1} b_j^p \bigg)^{1/p}
  s^{-(1/p-1)}\,.
\end{align}
\end{theorem}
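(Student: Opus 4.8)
The plan is to establish the three assertions in turn: the first by a direct energy estimate, the second by a duality argument that exploits a cancellation after integration over $U$, and the third by a Stechkin-type summability lemma.

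For the first inequality I would subtract the two weak formulations. Write $a(\bsx,\bsy)=a^s(\bsx,\bsy)+\sum_{j>s}y_j\psi_j(\bsx)$, where $a^s$ is the coefficient \eqref{eq:defaxy} truncated at $s$ terms, and let $b^s(\bsy;\cdot,\cdot)$ denote the bilinear form built from $a^s$. Since $(\bsy_{\{1:s\}};\bszero)\in U$, the form $b^s(\bsy;\cdot,\cdot)$ is coercive with the same constant $a_{\min}$. From the weak formulations for $u$ and $u^s$ we get, for every $v\in V$,
\[
  \int_D a^s(\bsx,\bsy)\,\nabla\big(u(\bsx,\bsy)-u^s(\bsx,\bsy)\big)\cdot\nabla v(\bsx)\,\rd\bsx
  \,=\, -\int_D\Big(\sum_{j>s}y_j\psi_j(\bsx)\Big)\nabla u(\bsx,\bsy)\cdot\nabla v(\bsx)\,\rd\bsx\;.
\]
Choosing $v=u(\cdot,\bsy)-u^s(\cdot,\bsy)$, bounding the left side from below by $a_{\min}\|u-u^s\|_V^2$, bounding $\|\sum_{j>s}y_j\psi_j\|_{L^\infty(D)}\le\tfrac12\sum_{j>s}\|\psi_j\|_{L^\infty(D)}$ on the right, applying Cauchy--Schwarz and the a priori estimate \eqref{eq:apriori_V} for $\|u(\cdot,\bsy)\|_V$, and dividing by $\|u-u^s\|_V$, yields the first displayed inequality with an explicit constant.

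For the integral bound \eqref{eq:Idimtrunc} I would, for each fixed $\bsy\in U$, introduce the solution $z^s(\cdot,\bsy)\in V$ of the truncated adjoint problem $b^s(\bsy;v,z^s)=G(v)$ for all $v\in V$, which is well posed by Lax--Milgram and satisfies $\|z^s(\cdot,\bsy)\|_V\le\|G\|_{V^*}/a_{\min}$. Then $G(u-u^s)=b^s(\bsy;u-u^s,z^s)$, and substituting $v=z^s$ in the identity above gives
\[
  G(u(\cdot,\bsy))-G(u^s(\cdot,\bsy))
  \,=\, -\int_D\Big(\sum_{j>s}y_j\psi_j\Big)\nabla u^s\cdot\nabla z^s\,\rd\bsx
  - \int_D\Big(\sum_{j>s}y_j\psi_j\Big)\nabla(u-u^s)\cdot\nabla z^s\,\rd\bsx\;.
\]
The crucial observation is that $u^s(\cdot,\bsy)$ and $z^s(\cdot,\bsy)$ depend only on $(y_1,\ldots,y_s)$, while $\sum_{j>s}y_j\psi_j$ involves only the mutually independent, mean-zero coordinates $y_j$, $j>s$; hence the first term integrates to $0$ over $U$. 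What remains is
\[
  I(G(u))-I_s(G(u))
  \,=\, -\int_U\int_D\Big(\sum_{j>s}y_j\psi_j\Big)\nabla(u-u^s)\cdot\nabla z^s\,\rd\bsx\,\rd\bsy\;,
\]
which I bound by $\tfrac12\sum_{j>s}\|\psi_j\|_{L^\infty(D)}=\tfrac12 a_{\min}\sum_{j>s}b_j$ times $\sup_{\bsy\in U}\|u-u^s\|_V$ times $\sup_{\bsy\in U}\|z^s\|_V$, and then invoking the first part of the theorem for $\|u-u^s\|_V$ and the bound on $\|z^s\|_V$; this produces the factor $\big(\sum_{j>s}b_j\big)^2$ and the claimed estimate. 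Getting the cancellation of this linear term correct is the main obstacle; everything else is Cauchy--Schwarz and the a priori bounds.

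Finally, for \eqref{eq:stechkin}, Assumption (\textbf{A5}) makes $(b_j)_{j\ge1}$ nonincreasing, so $s\,b_s^p\le\sum_{j=1}^s b_j^p\le\sum_{j\ge1}b_j^p$ and hence $b_j\le\big(\sum_{k\ge1}b_k^p\big)^{1/p}j^{-1/p}$ for every $j\ge1$. Summing over $j\ge s+1$ and comparing with $\int_s^\infty x^{-1/p}\,\rd x=\tfrac{1}{1/p-1}s^{-(1/p-1)}$ gives the bound with constant $1/(1/p-1)$. Alternatively, $\sum_{j>s}b_j\le b_{s+1}^{1-p}\sum_{j>s}b_j^p\le b_{s+1}^{1-p}\sum_{j\ge1}b_j^p$, and $b_{s+1}^{1-p}\le\big((\sum_k b_k^p)/(s+1)\big)^{(1-p)/p}$ together with $(1-p)/p=1/p-1$ gives the bound with constant $1$. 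Taking the smaller of the two constants yields the factor $\min\big(\tfrac{1}{1/p-1},1\big)$ and completes the proof.
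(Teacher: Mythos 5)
This theorem is imported from \cite[Theorem 5.1]{KSS1} and the present paper gives no proof of it, so there is nothing internal to compare against; judged on its own, your argument is correct and is essentially the standard one. The energy estimate for $\|u-u^s\|_V$ (with $C=\tfrac12$), the duality argument with the truncated adjoint solution $z^s$ in which the term $\int_D(\sum_{j>s}y_j\psi_j)\nabla u^s\cdot\nabla z^s\,\rd\bsx$ integrates to zero over $U$ because $u^s$ and $z^s$ depend only on $\bsy_{\{1:s\}}$ while the $y_j$, $j>s$, are independent and mean-zero, and the two complementary Stechkin estimates giving the constants $\tfrac{1}{1/p-1}$ and $1$, all check out and yield exactly the stated bounds.
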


\subsection{Finite Element Discretization} \label{ssec:FE}
Let us denote by $\{ V_h \}_h$ a one-parameter family of subspaces
$V_h\subset V$ of dimensions $M_h < \infty$. Under Assumption
(\textbf{A6}), we think of the spaces $V_h$ as spaces of continuous,
piecewise-linear finite elements on a sequence of regular, simplicial
meshes $\cT_h$ in $D$ obtained from an initial, regular triangulation
$\cT_0$ of $D$ by recursive, uniform bisection of simplices. Then it is
well known (see, e.g., \cite{Ciarlet}) that there exists a constant $C>0$
such that, as $h\to 0$, with the norm in $Z^t$ defined by
\eqref{eq:defnormZ},
\[
  \inf_{v_h \in V_h} \| v - v_h \|_V
  \,\le\, C\, h^t \,\| v \|_{Z^t}\quad
  \mbox{ for all } \; v\in Z^t \;,\quad 0\le t\le 1 \;.
\]
For any $\bsy\in U$, we define the {\em parametric FE approximation}
$u_h(\cdot,\bsy)$ as the FE solution of the parametric deterministic
problem: for $f\in V^*$ and $\bsy\in U$, find
\[
  u_h(\cdot,\bsy) \in V_h: \quad b(\bsy;u_h(\cdot,\bsy),v_h) \,=\,
  ( f, v_h ) \qquad \forall v_h \in V_h\;.
\]
Below we summarize the results from \cite{KSS1} regarding the FE error. We
remark that, by considering the error in approximating a bounded linear
functional, $\calO(h^2)$ convergence for $f, G\in L^2(D)$ follows from an
Aubin-Nitsche duality argument.
\begin{theorem}[{\cite[Theorems 7.1 and 7.2]{KSS1}}] \label{thm:FE}
Under Assumptions \textnormal{(\textbf{A1})}, \textnormal{(\textbf{A2})},
\textnormal{(\textbf{A4})}, and \textnormal{(\textbf{A6})}, for every
$f\in V^*$ and every $\bsy\in U$, the FE approximations $u_h(\cdot,\bsy)$
are stable in the sense that
\[
  \| u_h(\cdot,\bsy) \|_V \,\le\,
  \frac{\| f \|_{V^*}}{a_{\min}} \;.
\]
Moreover, for every $f\in H^{-1+t}(D)$ with $0\le t\le 1$, every $G\in
H^{-1+t'}$ with $0 \le t'\le 1$, and for every $\bsy\in U$, there hold the
asymptotic convergence estimates as $h\rightarrow 0$
\begin{equation} \label{eq:FE1}
  \| u(\cdot,\bsy) - u_h(\cdot,\bsy) \|_V
  \,\le\, C\, h^t\, \|u(\cdot,\bsy) \|_{Z^t}
  \,\le\, C\, h^t\, \|f\|_{H^{-1+t}(D)}
\end{equation}
and
\begin{equation} \label{eq:FE2}
  \left| G(u(\cdot,\bsy)) - G(u_h(\cdot,\bsy)) \right|
  \,\le\, \tilde{C}\, h^{\tau}\, \| f \|_{H^{-1+t}(D)}\, \| G \|_{H^{-1+t'}(D)}\;,
\end{equation}
where $0\le \tau := t + t'\le 2$, and where $C,\tilde{C}>0$ are
independent of $h$ and~$\bsy$.
\end{theorem}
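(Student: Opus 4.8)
The plan is to reconstruct the three assertions from the coercivity and continuity of $b(\bsy;\cdot,\cdot)$, the stated approximation property of $V_h$, and the a priori $Z^t$-regularity already recorded in Theorem~\ref{thm:weak}; throughout, the constants are tracked to be independent of $\bsy\in U$, which is guaranteed because Assumptions~\textnormal{(\textbf{A1})}, \textnormal{(\textbf{A2})}, \textnormal{(\textbf{A4})} bound the relevant norms of $a(\cdot,\bsy)$ uniformly in $\bsy$.

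\textbf{Stability and energy-norm estimate.} Since $V_h\subset V$, the discrete problem is well posed by Lax--Milgram with the same coercivity/continuity constants as the continuous one. Testing with $v_h=u_h(\cdot,\bsy)$ and using Assumption~\textnormal{(\textbf{A2})} gives $a_{\min}\|u_h(\cdot,\bsy)\|_V^2\le b(\bsy;u_h(\cdot,\bsy),u_h(\cdot,\bsy))=(f,u_h(\cdot,\bsy))\le\|f\|_{V^*}\|u_h(\cdot,\bsy)\|_V$, which yields the stability bound. For \eqref{eq:FE1}, Galerkin orthogonality $b(\bsy;u-u_h,v_h)=0$ for all $v_h\in V_h$ together with $a_{\min}\le a\le a_{\max}$ gives C\'ea's lemma $\|u(\cdot,\bsy)-u_h(\cdot,\bsy)\|_V\le (a_{\max}/a_{\min})\inf_{v_h\in V_h}\|u(\cdot,\bsy)-v_h\|_V$. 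For $f\in H^{-1+t}(D)$, Theorem~\ref{thm:weak} gives $u(\cdot,\bsy)\in Z^t$ with $\|u(\cdot,\bsy)\|_{Z^t}\le C\|f\|_{H^{-1+t}(D)}$, so the approximation property $\inf_{v_h\in V_h}\|v-v_h\|_V\le Ch^t\|v\|_{Z^t}$ bounds the infimum and proves \eqref{eq:FE1}.

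\textbf{Functional estimate via Aubin--Nitsche.} Since $b(\bsy;w,v)=\int_D a(\bsx,\bsy)\,\nabla w\cdot\nabla v\,\rd\bsx$ is symmetric, the adjoint problem coincides with the primal one: for $G\in H^{-1+t'}(D)$ let $w_G(\cdot,\bsy)\in V$ solve $b(\bsy;v,w_G(\cdot,\bsy))=G(v)$ for all $v\in V$. Applying Theorem~\ref{thm:weak} with data $G$ gives $w_G(\cdot,\bsy)\in Z^{t'}$ with $\|w_G(\cdot,\bsy)\|_{Z^{t'}}\le C\|G\|_{H^{-1+t'}(D)}$. Writing $e:=u(\cdot,\bsy)-u_h(\cdot,\bsy)$ and using Galerkin orthogonality of $e$, for any $v_h\in V_h$,
\[
  G(e)=b(\bsy;e,w_G(\cdot,\bsy))=b(\bsy;e,w_G(\cdot,\bsy)-v_h),
\]
hence $|G(e)|\le a_{\max}\|e\|_V\inf_{v_h\in V_h}\|w_G(\cdot,\bsy)-v_h\|_V\le a_{\max}\,C\,h^{t'}\|e\|_V\,\|w_G(\cdot,\bsy)\|_{Z^{t'}}$. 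Inserting the energy estimate \eqref{eq:FE1} for $\|e\|_V$ and the two a priori bounds gives $|G(e)|\le\tilde C\,h^{t+t'}\|f\|_{H^{-1+t}(D)}\|G\|_{H^{-1+t'}(D)}$, i.e.\ \eqref{eq:FE2} with $\tau=t+t'\in[0,2]$.

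\textbf{Main obstacle.} The only genuinely nontrivial ingredient is the shift-uniform $Z^t$-regularity for the rough-coefficient problem on a convex polyhedron across the full range $0\le t\le 1$ (the fractional range being obtained by interpolation between the trivial $t=0$ case and standard $H^2$-regularity at $t=1$, with the interpolation constant controlled through the $W^{1,\infty}$-bound on $a(\cdot,\bsy)$). This has, however, already been established in Theorem~\ref{thm:weak}, so here the remaining work reduces to the C\'ea plus Aubin--Nitsche chain above; the only point requiring care is that $a_{\max}/a_{\min}$ and the elliptic-regularity constant are $\bsy$-independent, which follows from Assumptions~\textnormal{(\textbf{A1})}, \textnormal{(\textbf{A2})}, \textnormal{(\textbf{A4})}.
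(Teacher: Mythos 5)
Your proposal is correct and follows exactly the route the paper intends: the theorem is quoted from \cite{KSS1}, where it is proved by precisely this chain of testing with $u_h$ for stability, C\'ea's lemma combined with the $V_h$-approximation property and the $\bsy$-uniform $Z^t$-regularity of Theorem~\ref{thm:weak} for \eqref{eq:FE1}, and an Aubin--Nitsche duality argument with the adjoint solution for data $G$ for \eqref{eq:FE2} --- the same duality device the present paper reuses (with $v^g$, $v^g_h$) in the proof of Theorem~\ref{thm:AN}. No gaps.
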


\subsection{QMC Approximation} \label{ssec:QMC}

As in \cite{KSS1}, in this paper we will focus on a family of QMC rules
known as \emph{randomly shifted lattice rules}. For an integral over the
$s$-dimensional unit cube $[-\frac{1}{2},\frac{1}{2}]^s$,
\[
  I_s(F) \,:=\, \int_{[-\frac{1}{2},\frac{1}{2}]^s} F(\bsy)\,\rd\bsy\;,
\]
a realization of an $N$-point randomly shifted lattice rule takes the form
\[
 Q_{s,N}(\bsDelta;F) \,:=\, \frac{1}{N} \sum_{i=1}^N F
  \left(\mathrm{frac}\left(\frac{i\bsz}{N} + \bsDelta\right)
  - \left(\tfrac{1}{2},\ldots,\tfrac{1}{2}\right)
  \right)\;,
\]
where $\bsz\in\bbZ^s$ is known as the \emph{generating vector}, which is
deterministic, while $\bsDelta$ is the \emph{random shift} to be drawn
from the uniform distribution on $[0,1]^s$, and $\mathrm{frac}(\cdot)$
means to take the fractional part of each component in the vector. The
subtraction by the vector $(\frac{1}{2},\ldots,\frac{1}{2})$ describes the
translation from the usual unit cube $[0,1]^s$ to
$[-\frac{1}{2},\frac{1}{2}]^s$. For the weighted Sobolev space
$\calW_{s,\bsgamma}$ with POD weights, good generating vectors $\bsz$ can
be constructed, using a \emph{component-by-component algorithm} at the
cost of $\calO(s\,N\,\log N + s^2N)$ operations, such that the ``shift
averaged" \emph{worst case error} achieves a dimension-independent
convergence rate close to $\calO(N^{-1})$. Moreover, the implied constant
in the big-$\calO$ bound can be independent of $s$ under appropriate
conditions on the weights $\gamma_\setu$. A short summary of these
results, together with references, can be found in \cite[Section~2]{KSS1}.
More detailed surveys can be found in \cite{DKS13} or \cite{KSS-survey}.
For the purpose of this paper, we only need the following bound on the
root-mean-square error.

\begin{theorem}[{\cite[Theorem 2.1]{KSS1}}] \label{thm:QMC}
Let $s,N\in \bbN$ be given, and assume $F\in \calW_{s,\bsgamma}$ for a
particular choice of weights $\bsgamma = (\gamma_\setu)$. Then a randomly
shifted lattice rule can be constructed using a component-by-component
algorithm such that the root-mean-square error satisfies, for all
$\lambda\in (1/2,1]$,
\[
  \sqrt{\bbE\,\left[ |I_s(F) - Q_{s,N}(\cdot;F)|^2 \right]}
  \,\le\,
  \left(
  \sum_{\emptyset\ne\setu\subseteq\{1:s\}} \gamma_\setu^\lambda\,
  [\rho(\lambda)]^{|\setu|}
  \right)^{1/(2\lambda)}
  [\varphi(N)]^{-1/(2\lambda)}\,\|F\|_{\calW_{s,\bsgamma}}\;,
\]
where $\bbE[\cdot]$ denotes the expectation with respect to the random
shift which is uniformly distributed over $[0,1]^s$, $\varphi(N) =
|\{1\le z\le N-1: \gcd(z,N)=1\}|$ denotes the Euler totient function,
\begin{equation} \label{eq:rho}
  \rho(\lambda) \,:=\,
  \frac{2\zeta(2\lambda)}{(2\pi^2)^\lambda}\;,
\end{equation}
and $\zeta(x) = \sum_{k=1}^\infty k^{-x}$ denotes the Riemann zeta
function.
\end{theorem}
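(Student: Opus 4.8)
The plan is the classical two-part argument: (i) bound the shift-averaged mean-square error by the \emph{shift-averaged worst-case error} of the lattice rule in the reproducing kernel Hilbert space $\calW_{s,\bsgamma}$, and (ii) construct a generating vector $\bsz$ by the component-by-component (CBC) algorithm and estimate its shift-averaged worst-case error by an averaging argument. (This is exactly \cite[Theorem~2.1]{KSS1}, which rests on the CBC analyses of \cite{K03,NC06a,NC06b}; we use it here as a black box, the $\calO(sN\log N + s^2N)$ construction cost for POD weights being supplied by \cite{DKS13,KSS-survey}.)

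\textbf{Step 1.} The norm \eqref{eq:norm} makes $\calW_{s,\bsgamma}$ a reproducing kernel Hilbert space with kernel $K_{s,\bsgamma}(\bsx,\bsy)=\sum_{\setu\subseteq\{1:s\}}\gamma_\setu\prod_{j\in\setu}\eta(x_j,y_j)$, with $\eta$ the one-dimensional unanchored kernel. Writing the squared worst-case error of the randomly shifted lattice rule via the usual formula in $K_{s,\bsgamma}$ and averaging over the shift $\bsDelta\in[0,1]^s$ replaces each factor $\eta(\{t+\Delta\},\{t'+\Delta\})$, after integration in $\Delta$, by $B_2(\{t-t'\})$; evaluating at the lattice points yields, by Cauchy--Schwarz in $\calW_{s,\bsgamma}$,
\[
  \bbE\!\left[|I_s(F)-Q_{s,N}(\cdot\,;F)|^2\right]
  \,\le\, [e^{\mathrm{sh}}_{s,N}(\bsz)]^2\,\|F\|_{\calW_{s,\bsgamma}}^2,
  \qquad
  [e^{\mathrm{sh}}_{s,N}(\bsz)]^2
  =\!\!\sum_{\emptyset\ne\setu\subseteq\{1:s\}}\!\!\gamma_\setu\,\theta_\setu(\bsz_\setu),
\]
with $\theta_\setu(\bsz_\setu)=\frac1N\sum_{k=0}^{N-1}\prod_{j\in\setu}B_2(\{kz_j/N\})$. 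Inserting the Fourier series $B_2(\{x\})=\frac{1}{2\pi^2}\sum_{h\ne0}h^{-2}e^{2\pi\mathrm{i}hx}$ and using $\frac1N\sum_{k=0}^{N-1}e^{2\pi\mathrm{i}km/N}=\mathbf 1[N\mid m]$ gives
\[
  \theta_\setu(\bsz_\setu)=\frac{1}{(2\pi^2)^{|\setu|}}
  \sum_{\substack{\boldsymbol h\in(\bbZ\setminus\{0\})^\setu\\ \boldsymbol h\cdot\bsz_\setu\equiv0\ (\mathrm{mod}\ N)}}\ \prod_{j\in\setu}h_j^{-2}\ \ge\ 0,
\]
so $[e^{\mathrm{sh}}_{s,N}(\bsz)]^2$ is a sum of non-negative terms.

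\textbf{Step 2.} Construct $z_1,\dots,z_s\in U_N:=\{1\le z\le N-1:\gcd(z,N)=1\}$ greedily, choosing $z_d$ to minimise $[e^{\mathrm{sh}}_{d,N}(z_1,\dots,z_d)]^2$. I would then show by induction on $d$ that, for every $\lambda\in(\tfrac12,1]$,
\[
  [e^{\mathrm{sh}}_{d,N}(z_1,\dots,z_d)]^{2\lambda}
  \,\le\,\frac{1}{\varphi(N)}\sum_{\emptyset\ne\setu\subseteq\{1:d\}}\gamma_\setu^\lambda\,[\rho(\lambda)]^{|\setu|}.
\]
Since $t\mapsto t^\lambda$ is increasing, the minimiser satisfies $[e^{\mathrm{sh}}_{d,N}]^{2\lambda}\le$ (average over $z_d\in U_N$); then the elementary inequality $(\sum_k a_k)^\lambda\le\sum_k a_k^\lambda$ for $a_k\ge0$, $0<\lambda\le1$ (Jensen), applied first to the sum over $\setu$ and then to the frequency sum, gives $[e^{\mathrm{sh}}_{d,N}]^{2\lambda}\le\sum_{\setu}\gamma_\setu^\lambda\theta_\setu^\lambda$ and $\theta_\setu^\lambda\le(2\pi^2)^{-\lambda|\setu|}\sum_{\boldsymbol h\cdot\bsz_\setu\equiv0}\prod_{j\in\setu}|h_j|^{-2\lambda}$. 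One splits the subsets of $\{1:d\}$ into those not containing $d$ (bounded by the induction hypothesis) and those containing $d$; for the latter, averaging over $z_d\in U_N$ and using that (for $N\nmid h_d$) the congruence $h_dz_d\equiv c\ (\mathrm{mod}\ N)$ has at most one solution in $U_N$ removes the linear constraint at the price of a factor $1/\varphi(N)$ and frees the frequency sum to $(2\zeta(2\lambda))^{|\setu|}$; combined with the prefactor $(2\pi^2)^{-\lambda|\setu|}$ this yields precisely $\gamma_\setu^\lambda[\rho(\lambda)]^{|\setu|}/\varphi(N)$ with $\rho(\lambda)=2\zeta(2\lambda)/(2\pi^2)^\lambda$ as in \eqref{eq:rho}. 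Taking $2\lambda$-th roots and inserting into Step~1 gives the theorem.

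\textbf{Main obstacle.} The crux is the averaging in the induction step, in two respects. First, for general (non-product) weights the bookkeeping across all $\setu\ni d$ must be carried out carefully. Second, and more seriously, composite $N$ forces one to (a) dispose of the ``resonant'' frequencies with $N\mid h_d$, whose contribution is not reduced by averaging over $z_d$ and instead forms a lower-order tail tied to a $(d-1)$-dimensional projection that must be reabsorbed into the induction, and (b) control $|\{z\in U_N:h_dz\equiv c\ (\mathrm{mod}\ N)\}|$ when $\gcd(h_d,N)>1$, so that the clean denominator $\varphi(N)$ --- rather than $N-1$, which is only immediate when $N$ is prime --- genuinely appears uniformly in $\boldsymbol h$ and in the partial dot product $c$. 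Everything else (the RKHS computation, the shift-averaging, and the two Jensen steps) is routine.
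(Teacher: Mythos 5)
The paper offers no proof of this statement: Theorem~\ref{thm:QMC} is imported verbatim from \cite[Theorem~2.1]{KSS1}, which in turn rests on the standard CBC analyses you cite. Your sketch is a faithful and correct outline of exactly that underlying argument --- the shift-averaged worst-case error identity in the unanchored RKHS, the Fourier representation of $B_2$, the two applications of Jensen's inequality, and the averaging over $z_d\in U_N$ --- and you correctly flag the only genuinely delicate points (resonant frequencies $N\mid h_d$ and the solution count of $h_d z\equiv c \pmod N$ for composite $N$), so there is nothing to compare against or correct here.
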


For example, when $N$ is prime, $\varphi(N) = N-1$ and a rate of
convergence arbitrarily close to $\calO(N^{-1})$ comes from taking
$\lambda$ in the theorem close to $1/2$. However, note that $\rho(\lambda)
\to\infty$ as $\lambda\to (1/2)+$, making the convergence of the sum over
$\setu$ more and more problematic as $\lambda$ comes closer to $1/2$. For
that reason we shall leave $\lambda$ as a free parameter in the subsequent
discussion.

\section{Multi-level QMC FE Algorithm} \label{sec:main}

\subsection{Formulation of the Multi-level QMC FE Algorithm} \label{ssec:algo}

We are now ready to formulate our multi-level QMC FE algorithm for
approximating the integral \eqref{eq:integral}. Let
\[ %\begin{equation} \label{eq:choice-h}
  h_\ell \,=\, 2^{-\ell}\,h_0 \qquad\mbox{for}\qquad\ell=0,1,2,\ldots \;.
\] %\end{equation}
We suppose that we are given a nested sequence $\{V_{h_\ell}\}_{\ell\ge
0}$ of finite-dimensional subspaces of $V$ of increasing dimension,
\[
  M_{h_0} < M_{h_1} < \cdots < M_{h_\ell} := \dim(V_{h_\ell})
  \,\asymp 2^{d\ell}\, \to \infty \quad\mbox{as}\quad \ell\to\infty
  \;,
\]
where $a_n\asymp b_n$ means there exist $c_1,c_2>0$ such that $c_1
b_n\le a_n\le c_2 b_n$.
In the multi-level method we specify a maximum
level $L$, and with each level~$\ell=0,\ldots,L$ of (uniform) mesh
refinement $\cT_{h_\ell}$ we associate a randomly shifted lattice rule
$Q_{s_\ell,N_\ell}$ which uses $N_\ell$ points in $s_\ell$ dimensions. We
assume moreover that the sequence $\{ s_\ell \}_{\ell=0,\ldots,L}$ of
active dimensions is nondecreasing, i.e.,
\begin{equation}\label{eq:sellincr}
  s_0 \le s_1 \le \cdots \le s_\ell \le s_L\;,
\end{equation}
which implies that the corresponding sets of active coordinates are
nested. To simplify the ensuing presentation, we write
(with slight abuse of notation)
\[ %\begin{equation} \label{eq:MLQMCnotation}
  V_\ell \equiv V_{h_\ell}\;,\quad
  \cT_\ell \equiv \cT_{h_\ell}\;,\quad
  Q_\ell \equiv Q_{s_\ell,N_\ell}\;,\quad
  I_\ell \equiv I_{s_\ell}\;,\quad
  u_\ell \equiv u_{h_\ell}^{s_\ell}\;,\quad
  M_\ell \equiv M_{h_\ell} \;.
\] %\end{equation}
Here by $u_{h_\ell}^{s_\ell}$ we mean the FE solution of the truncated
problem with $s_\ell$ terms in the expansion, which is the same as
$u_{h_\ell}(\bsy_{\{1:s_\ell\}};0)$. For convenience we define $u_{-1}
:=0$. Each lattice rule $Q_\ell$ depends on a deterministic generating
vector $\bsz_\ell\in \bbZ^{s_\ell}$, but we shall suppress this dependence
in our notation. A realization of the lattice rule $Q_\ell$ for a draw of
the shift $\bsDelta_\ell\in [0,1]^{s_\ell}$ applied to a function $F$ will
be denoted by $Q_\ell(\bsDelta_\ell;F)$. The random shifts
$\bsDelta_0,\ldots,\bsDelta_L$ are drawn independently from the uniform
distribution on unit cubes of the appropriate dimension. With these
notations, a single realization of our multi-level QMC FE approximation of
$I(G(u))$ is given by
\begin{equation} \label{eq:MLQMCFE}
  Q_*^L(\bsDelta_*;G(u))
  \,:=\, \sum_{\ell=0}^L Q_\ell (\bsDelta_\ell;G(u_\ell-u_{\ell-1}))\;,
\end{equation}
where $\bsDelta_* := (\bsDelta_0,\ldots,\bsDelta_L)$ will be referred to
as the ``compound shift": it comprises all $s_*:=\sum_{\ell=0}^L s_\ell$
components of the random shifts $\bsDelta_\ell$. Equivalently,
$\bsDelta_*$ is drawn from the uniform distribution over $[0,1]^{s_*}$.

The randomly shifted version of \eqref{eq:MLQMCFE} that we use in practice
makes use of $m_\ell$ i.i.d. realizations of the level-$\ell$ shift
$\bsDelta_\ell$, thus takes the form
\begin{equation}\label{eq:prac}
  Q^L(G(u)) \,:=\,
  \sum_{\ell=0}^L \frac{1}{m_\ell}\sum_{i=1}^{m_\ell}
  Q_\ell (\bsDelta_{\ell}^{(i)};G(u_\ell-u_{\ell-1}))\;.
\end{equation}
In the subsequent analysis we work with exact expectations of
\eqref{eq:MLQMCFE}, but in the final section we return to \eqref{eq:prac},
and there justify choosing $m_\ell$ to be a fixed number independent of
$\ell$.

\subsection{Error Analysis of the Multi-level QMC FE Algorithm}
\label{ssec:err1}

Using linearity of $I$, $I_\ell$, $Q_\ell$ and $G$, we can express the
error as
\[
  I(G(u)) - Q_*^L(\bsDelta_*;G(u))
  \,=\, I(G(u)) - \sum_{\ell=0}^L Q_\ell(\bsDelta_\ell;G(u_\ell-u_{\ell-1}))
  \,=\, T_1 + T_2(\bsDelta_*)\;,
\]
where
\begin{align}
  T_1
  &\,:=\, I(G(u)) - \sum_{\ell=0}^L I_\ell (G(u_\ell-u_{\ell-1}))\;, \label{eq:T1} \\
  T_2(\bsDelta_*)
  &\,:=\,
  \sum_{\ell=0}^L (I_\ell- Q_\ell(\bsDelta_\ell)) (G(u_\ell-u_{\ell-1}))\;, \nonumber
\end{align}
where we introduced the operator notation $Q(\Delta)(F):=Q(\Delta;F)$.
Since a randomly shifted lattice rule is an unbiased estimator of the
original integral, it follows that the mean-square error for our
multi-level QMC FE method, i.e., the expectation of the square error with
respect to $\bsDelta_*\in [0,1]^{s_*}$, simplifies to
\begin{align} \label{eq:error-bound}
  \bbE[|I(G(u)) - Q_*^L( \cdot;G(u))|^2]
  &\,=\, T_1^2 + \bbE [T_2^2]\;,
\end{align}
where the cross term vanishes due to $\bbE [T_2] = 0$,
and we have
\begin{align} \label{eq:T2}
  \bbE [T_2^2]
  &\,=\, \sum_{\ell=0}^L \bbE [|(I_\ell-Q_\ell(\cdot)) (G(u_\ell-u_{\ell-1}))|^2]\;,
\end{align}
where the expectation inside the sum over index $\ell$ is with respect to
the random shift $\bsDelta_\ell\in [0,1]^{s_\ell}$.

First we estimate $T_1$ given by \eqref{eq:T1}. Since $u_\ell -
u_{\ell-1}$ only depends on the first $s_\ell$ dimensions, we can replace
$I_\ell(G(u_\ell - u_{\ell-1}))$ by $I(G(u_\ell - u_{\ell-1}))$, and hence
the expression \eqref{eq:T1} simplifies to
\begin{align*}
  T_1
  \,=\,I(G(u-u_L))
  \,=\, I(G(u-u_{h_L})) + I(G(u_{h_L}-u_{h_L}^{s_L}))\;.
\end{align*}
Here $u_{h_L}-u_{h_L}^{s_L}$ is the error that we incur in the FE
approximation by omitting in the coefficient expansion \eqref{eq:defaxy}
all terms with indices $j>s_L$. As we will show in
Theorem~\ref{thm:orthprop} below, {\em this dimension truncation error
vanishes for certain types of \textnormal{(}multiresolution\textnormal{)}
coefficient expansion \eqref{eq:defaxy}}. To allow for this, we introduce
a parameter $\theta_L \in \{0,1\}$, with $\theta_L=1$ in general and
$\theta_L = 0$ indicating that there is no truncation error, and arrive at
the estimate
\begin{align} \label{eq:T1-bound}
  |T_1|
  &\,\le\,
  \sup_{\bsy\in U} |G(u(\cdot,\bsy)-u_{h_L}(\cdot,\bsy))|
  \,+\, \theta_L\, |I(G(u_{h_L}-u_{h_L}^{s_L}))| \nonumber
  \\
  &\,\le\,
  C\,h_L^{\tau}\, \|f\|_{H^{-1+t}(D)}\,\|G\|_{H^{-1+t'}(D)}
  \,+\,
  \theta_L\, \tilde{C}\,
  \frac{\|f\|_{V^*}\,\|G\|_{V^*}}{a_{\min}}
  \bigg(\sum_{j\ge s_L+1} b_j \bigg)^2\;,
\end{align}
where for the first term we applied \eqref{eq:FE2} from
Theorem~\ref{thm:FE}, and for the second term we used \eqref{eq:Idimtrunc}
from Theorem~\ref{thm:trunc} but adapted to the FE solution $u_{h_L}$
instead of $u$.

Next we estimate $\bbE[T_2^2]$ given by \eqref{eq:T2}. We have from
Theorem~\ref{thm:QMC} that
\begin{align} \label{eq:T2-bound}
  \bbE[T_2^2]
  &\,\le\, \sum_{\ell=0}^L
  \left(\sum_{\emptyset\ne\setu\subseteq{\{1:s_\ell\}}}
  \gamma_\setu^\lambda\, [\rho(\lambda)]^{|\setu|}\right)^{1/\lambda}
  [\varphi(N_\ell)]^{-1/\lambda}\,
  \|G(u_{h_\ell}^{s_\ell} - u_{h_{\ell-1}}^{s_{\ell-1}})\|_{\calW_{s_\ell,\bsgamma}}^2
  \;.
\end{align}
To estimate each term in \eqref{eq:T2-bound} for $\ell\ne 0$,
we write
\begin{equation} \label{eq:key}
  \|G(u_{h_\ell}^{s_\ell} - u_{h_{\ell-1}}^{s_{\ell-1}})\|_{\calW_{s_\ell,\bsgamma}}
  \,\le\,
  \|G(u_{h_\ell}^{s_\ell} - u_{h_{\ell-1}}^{s_\ell})\|_{\calW_{s_\ell,\bsgamma}}
  + \|G(u_{h_{\ell-1}}^{s_\ell} - u_{h_{\ell-1}}^{s_{\ell-1}})\|_{\calW_{s_\ell,\bsgamma}}\;.
\end{equation}
In \S\ref{ssec:key} ahead, we bound the two terms in \eqref{eq:key}
separately, and then return to complete the error analysis in
\S\ref{ssec:err2}. Note that the second term in \eqref{eq:key} vanishes if
$s_\ell = s_{\ell-1}$. It also vanishes in the special case when, for all
$\ell\ge 1$ and an appropriately chosen increasing sequence ${s_\ell}$, we
have
$u_{h_{\ell-1}}^{s_{\ell-1}}=u_{h_{\ell-1}}^{s_{\ell}}=u_{h_{\ell-1}}$.
This can happen when there is a special orthogonality property between the
functions $\psi_j$ in the representation~\eqref{eq:defaxy} and the FE
spaces~$V_\ell$. We discuss this very important special case in the next
subsection.

\subsection{A Special Case with an Orthogonality Property}
\label{ssec:ortho}
In this subsection we suppose that the sequence $\psi_j$ has
properties usually associated with a multiresolution analysis of
$L^2(D)$, as shown in the Haar wavelet example below. For this purpose it
is useful to relabel the basis set with a double index, as
\begin{equation} \label{eq:relabel}
  \{\psi_j: j\ge 1\}
  \,=\,
  \{\psi_m^n: n\ge 0,\; m\in J_n\}\;,
\end{equation}
where the first index $n$ indicates the (multiresolution) level, and the
second index $m\in J_n$ indicates the location of a level-$n$ basis
function within $D$, with $J_n$ denoting the set of all location indices
at level $n$. We suppose that all basis functions $\psi_m^n$ at level~$n$
are piecewise polynomial functions on the triangulation $\cT_n$, and have
isotropic support whose diameter is of exact order $h_n$,
implying $|J_n|\asymp 2^{dn}$.
\begin{definition} \label{def:ortho}
Let $S^0(D,\cT)$ and $S^1(D,\cT)$ be the subspaces defined by
\begin{align*}
S^0(D,\cT)&\,:=\, \{v\in L^2(D) \;:\; v|_{K}\in P^0(K) \mbox{ for all } K\in \cT\}\;,
\\
S^1(D,\cT) &\,:=\, \{v\in H^1_0(D) \;:\; v|_K\in P^1(K) \mbox{ for all } K\in \cT \}\;,
\end{align*}
where $P^r(K)$ denotes the space of polynomials of degree less than or
equal to $r$ on the element $K$.
We say that the set
$\{\psi_m^n\}_{n\ge 0,m\in J_n}$ has the {\em $k$-orthogonality property},
for $k\in\{1,2\}$, with respect to the triangulations $\{\cT_\ell:\ell\ge
0\}$ if for all $\ell\ge 0$ we have
\begin{align}\label{eq:orthprop}
  & \int_D\psi_m^n(\bsx)z_\ell(\bsx) \,\rd\bsx \,=\, 0
  \quad \mbox{for all}\; n\ge\ell+k\;,\;
  m\in J_n\;,
  \;\mbox{and}\;
  z_\ell\in S^0 (D,\cT_\ell) \;,
\end{align}
and $\psi_m^n \in S^{k-1}(D,\cT_{\ell+k-1})$ for all $n\le \ell+k-1$,
$m\in J_n$, and ${\rm diam}(\supp(\psi^n_m)) \asymp h_n$.
\end{definition}

A necessary condition for \eqref{eq:orthprop} to hold is that the
functions $\psi^n_m$ for $n\ge k$ have the {\em vanishing mean property},
that is
\[
  \int_D\psi_m^n(\bsx) \,\rd\bsx \,=\, 0
  \quad\mbox{for all}\; n\ge k
  \;\mbox{and all}\; m\in J_n\;.
\]
\begin{example}[Haar Wavelets] \label{ex:Haar}
We describe here the simplest case, of Haar wavelets for a one-dimensional
domain $D=[0,a]$, with $a$ some positive integer greater than or equal to
$2$. In the Haar wavelet case we may take, for $m=0,\ldots,a-1$,
\[
  \psi_m^0(x) \,:=\,
  \begin{cases}
  1 & \mbox{for } x\in [m,m+1)\;,
  \\
  0 & \mbox{otherwise}\;,
  \end{cases}
\]
and for $n\ge 1$,
\[
  \psi_m^n(x):= d^n_m\, \psi(2^n x-2m),\quad m=0,\ldots,2^{n-1}a-1,
\]
where $d^n_m$ is a sequence of nonnegative scaling parameters, $\psi(x)$
is $1$ for $x\in[0,1)$, $-1$ for $x\in[1,2)$, and $0$ otherwise.
The family $\{\psi^n_m\}$ forms an orthogonal basis of $L^2([0,a])$ if
$d^n_m > 0$. We remark that the choice $d^n_m = 2^{(n-1)/2}$
which is
well-known to imply orthonormality of the $\psi^n_m$ in $L^2([0,a])$ is
inconsistent with (\textbf{A1}), and is therefore excluded.

For the finite element space $V_0$ we take the piecewise-linear functions
vanishing at $0$ and $a$. This space is spanned by the hat functions
centered at $1,2,\ldots,a-1$. The spaces $V_\ell$ are then the
piecewise-linear functions on $[0,a]$ vanishing at $0$ and~$a$, spanned by
the hat functions centered at multiples of $2^{-\ell}$. Correspondingly,
$\cT_\ell$ is the mesh consisting of the multiples of $2^{-\ell}$, and the
elements $K_\ell$ are the intervals of length $2^{-\ell}$ between the mesh
points.

With this definition of $\cT_\ell$, the multiresolution sequence
$\{\psi_m^n\}$ has the $k$-orthogonality property with respect to
$\cT_\ell$ with $k=1$, for all $\ell\ge 0$. For example, for $\ell=0$ and
$n=1, m=0$ we have, with $z_0\in S^0([0,a],\cT_0)$ and $c:=z_0|_{[0,1]}$,
\[
  \int_0^a\psi_0^1(x)z_0(x)\,\rd x
  \,=\,c\int_0^1\psi_0^1(x)\,\rd x
  \,=\,c\, d^1_0 \int_0^1\psi(2x)\,\rd x
  \,=\,0 \;.
\]
\end{example}
Haar wavelets do not satisfy Assumption~(\textbf{A4}), since for
(\textbf{A4}) to hold the basis functions $\psi_m^n$ need to be
Lipschitz continuous. A piecewise-linear $k$-orthogonal basis set with
$k=2$ in dimension $d=1$ is constructed, for example, in \cite{DKU}.
For detailed constructions of $k$-orthogonality basis sets with $k=2$
and $d>1$, see \cite{DKU,NguyenDiss}; for the case $k=1$ and $d>1$ see
\cite[Section 5]{BSZ}.

In the following theorem, we show that there is \emph{no truncation error
at any level} for our multi-level algorithm under $k$-orthogonality if the
dimension for truncation $s_\ell$ is chosen appropriately at each level.
This result is intrinsically linked to the linear structure in
\eqref{eq:defaxy}. To achieve this, we employ a one-to-one mapping of the
indices between the functions $\psi_j$ and $\psi^n_m$ in
\eqref{eq:relabel}: instead of ordering the functions as in
Assumption~(\textbf{A5}), we index $j$ according to a level-wise grouping
so that the functions $\{\psi_m^0\}_{m\in J_0}$ come before the functions
$\{\psi_m^1\}_{m\in J_1}$, followed by the functions $\{\psi_m^2\}_{m\in
J_2}$, and so on. Correspondingly, we employ the same index mapping
between $y_j$ and $y^n_m$ for the components of $\bsy$.

\begin{theorem}\label{thm:orthprop}
Let $\{\psi^n_m : n\ge 0, m\in J_n\}$ be a multiresolution basis set for
the domain~$D$, with $|J_n|\asymp 2^{dn}$, which has the $k$-orthogonality
property with $k\in\{1,2\}$ with respect to the triangulations
$\{\cT_\ell:\ell\ge 0\}$.
Let $\{y_j:j\ge 1\} = \{y^n_m : n\ge 0, m\in J_n\}$
denote the corresponding parameters under the level-wise
relabelling \eqref{eq:relabel} so that the parametric coefficient in
\eqref{eq:defaxy} can be represented in the form
\[ %\begin{equation}\label{eq:WavExpa}
  a(\bsx,\bsy)
  \,=\, \bar{a}(\bsx)
  +\sum_{n=0}^\infty \sum_{m\in J_n} y^n_m \psi^n_m(\bsx)\;.
\] %\end{equation}
Let
\begin{equation} \label{eq:def_s_k}
  s_\ell \,:=\, \sum_{n=0}^{\ell+k-1}|J_n|\;.
\end{equation}
Then $s_\ell \asymp 2^{d\ell} \asymp M_{h_\ell}$, and for all $\ell \ge 0$
we have
\begin{equation}\label{eq:dimorth}
  u_{h_\ell} \,=\, u_{h_\ell}^{s_\ell}\;.
\end{equation}
As $\ell\to\infty$, the number of nonzero entries in the Finite Element
stiffness matrix for the parametric coefficient $a(\bsx,\bsy)$ at
meshlevel $\ell\ge 0$ for any given $\bsy\in U$ is $\calO(M_{h_\ell})$. We
assume that each of the nonzero entries can be computed in
$\calO(\log(M_{h_\ell}))$ operations, leading to a total cost of
$\calO(M_{h_\ell}\,\log(M_{h_\ell}))$ operations.
\end{theorem}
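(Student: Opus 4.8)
The plan is to prove the three assertions of Theorem~\ref{thm:orthprop} in turn. The counting claim $s_\ell \asymp 2^{d\ell} \asymp M_{h_\ell}$ is immediate: summing $|J_n|\asymp 2^{dn}$ over $n=0,\ldots,\ell+k-1$ gives a geometric sum dominated by its last term, so $s_\ell \asymp 2^{d(\ell+k-1)}\asymp 2^{d\ell}$, and by hypothesis $M_{h_\ell}\asymp 2^{d\ell}$. The constant $k$ is fixed, so it is absorbed into the implied constants.

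The heart of the matter is \eqref{eq:dimorth}. Here I would argue as follows. Fix $\ell\ge 0$ and write $\tilde{a}(\bsx,\bsy_{\{1:s_\ell\}}) := \bar a(\bsx)+\sum_{n=0}^{\ell+k-1}\sum_{m\in J_n}y^n_m\psi^n_m(\bsx)$ for the truncated coefficient, and $a(\bsx,\bsy)$ for the full one. The FE solutions $u_{h_\ell}$ and $u_{h_\ell}^{s_\ell}$ both lie in $V_\ell = S^1(D,\cT_\ell)$ and satisfy the Galerkin equations with coefficient $a$ and $\tilde a$ respectively, tested against all $v_h\in V_\ell$. Since both are solutions of a coercive problem in the same finite-dimensional space, it suffices to show that the two bilinear forms agree on $V_\ell\times V_\ell$, i.e.\ that
\[
  \int_D \big(a(\bsx,\bsy)-\tilde a(\bsx,\bsy_{\{1:s_\ell\}})\big)\,\nabla w_h(\bsx)\cdot\nabla v_h(\bsx)\,\rd\bsx \,=\, 0
  \qquad\text{for all }w_h,v_h\in V_\ell\;.
\]
The difference $a-\tilde a = \sum_{n\ge \ell+k}\sum_{m\in J_n}y^n_m\psi^n_m$ involves only the ``fine'' fluctuation functions with multiresolution level $n\ge\ell+k$. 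For $w_h,v_h\in S^1(D,\cT_\ell)$, the product $\nabla w_h\cdot\nabla v_h$ is piecewise constant on $\cT_\ell$, i.e.\ it belongs to $S^0(D,\cT_\ell)$; hence each integral $\int_D \psi^n_m(\bsx)\,(\nabla w_h\cdot\nabla v_h)(\bsx)\,\rd\bsx$ vanishes by the $k$-orthogonality property \eqref{eq:orthprop}. Summing over $n\ge\ell+k$ and $m\in J_n$ (the sum converges absolutely by Assumption~(\textbf{A1})) gives the claim, and therefore $u_{h_\ell}^{s_\ell}=u_{h_\ell}$. I would note that the second clause of Definition~\ref{def:ortho}, namely $\psi^n_m\in S^{k-1}(D,\cT_{\ell+k-1})$ for $n\le\ell+k-1$, is what guarantees that the \emph{retained} functions are already resolved on the mesh $\cT_{\ell+k-1}\subseteq\cT_\ell$, but for the identity \eqref{eq:dimorth} only the orthogonality of the \emph{discarded} functions against $S^0(D,\cT_\ell)$ is needed.

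For the complexity claim I would use the same locality. At mesh level $\ell$ the stiffness matrix entry for two nodal basis functions $\phi_p,\phi_q$ of $V_\ell$ is $\int_D a(\bsx,\bsy)\nabla\phi_p\cdot\nabla\phi_q\,\rd\bsx$, which is nonzero only when $\supp\phi_p\cap\supp\phi_q$ has positive measure; since $\cT_\ell$ is a regular simplicial mesh, each node has $\calO(1)$ neighbours, so the matrix has $\calO(M_{h_\ell})$ nonzero entries. For a fixed nonzero entry, by \eqref{eq:dimorth} we may replace $a$ by $\tilde a$, so only the $s_\ell\asymp M_{h_\ell}$ functions $\psi^n_m$ with $n\le\ell+k-1$ contribute; moreover, by the support condition $\mathrm{diam}(\supp(\psi^n_m))\asymp h_n$, each fixed element $K\in\cT_\ell$ meets only $\calO(1)$ supports $\supp(\psi^n_m)$ at each of the $\ell+k$ levels $n=0,\ldots,\ell+k-1$, hence $\calO(\ell)=\calO(\log M_{h_\ell})$ of the functions $\psi^n_m$ in total, and on each such support $\psi^n_m$ is a known polynomial whose integral against the (piecewise-polynomial) product $\nabla\phi_p\cdot\nabla\phi_q$ over $K$ is computed at unit cost. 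Summing over the $\calO(1)$ elements in $\supp\phi_p\cap\supp\phi_q$ gives $\calO(\log M_{h_\ell})$ work per entry, and multiplying by the $\calO(M_{h_\ell})$ nonzero entries gives the stated total of $\calO(M_{h_\ell}\log M_{h_\ell})$.

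The main obstacle is the bookkeeping in the last step: one must be careful that ``$\calO(1)$ supports per element at each level'' really holds, which relies on the \emph{isotropic} support of diameter of exact order $h_n$ (not merely $\lesssim h_n$) together with regularity of the mesh family, so that at a given level the supports overlap with only boundedly many elements — and symmetrically, a given element is covered by only boundedly many supports. The identity \eqref{eq:dimorth} itself is conceptually clean once one observes that Galerkin solutions depend on the bilinear form only through its restriction to the discrete space, so the truly delicate point is the cost estimate rather than the exactness statement.
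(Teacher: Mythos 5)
Your proposal is correct and follows essentially the same route as the paper: the identity \eqref{eq:dimorth} is obtained by noting that $\nabla w_h\cdot\nabla v_h\in S^0(D,\cT_\ell)$ for $w_h,v_h\in V_\ell$, so the $k$-orthogonality property annihilates all terms with $n\ge\ell+k$ in the bilinear form, and uniqueness of the Galerkin solution concludes; the cost estimate likewise matches the paper's counting of $\calO(1)$ overlapping $\psi^n_m$ per level per nonzero entry. Your added remarks on absolute convergence of the tail sum and on which clause of Definition~\ref{def:ortho} is actually used are harmless refinements of the same argument.
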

\begin{proof}
There holds $\nabla V_\ell \subseteq S^0(D,\cT_\ell)^d$ for all $\ell\geq
0$. Thus, for all $\ell\ge 0$ and for every $v_\ell,w_\ell \in V_\ell$, we
have $\nabla w_\ell \cdot \nabla v_\ell \in S^0(D,\cT_\ell)$. The
$k$-orthogonality property \eqref{eq:orthprop} therefore implies for all
$\ell \geq 0$ and for all $v_{\ell}, w_{\ell} \in V_{\ell}$
\begin{align} \label{eq:bMulti}
 b(\bsy;w_{\ell} , v_{\ell})
&\,=\,
 \int_D
 \left( \bar{a}(\bsx) + \sum_{n=0}^\infty\sum_{m\in\cI_n} y^n_m \psi^n_m(\bsx) \right)
 \nabla w_{\ell} \cdot \nabla v_{\ell} \,\rd\bsx
\nonumber
\\
 &\,=\,
 \int_D
 \left( \bar{a}(\bsx) + \sum_{n=0}^{\ell+k-1}\sum_{m\in\cI_n} y^n_m \psi^n_m(\bsx) \right)
 \nabla w_{\ell} \cdot \nabla v_{\ell} \,\rd\bsx \\
 &\,=\, b(\bsy_{\{1:{s_{\ell}}\}};w_{\ell} , v_{\ell})
 \;. \nonumber
\end{align}
The assertion \eqref{eq:dimorth} then follows from the uniqueness of the
FE solutions.

To show the assertion on the cost, for given $\bsy$ we denote by
$\matrB^{\ell}(\bsy)$ the $M_{\ell}\times M_{\ell}$ stiffness matrix of
the parametric bilinear form $b(\bsy;\cdot,\cdot)$, restricted to
$V_{\ell}\times V_{\ell}$, where $V_\ell = {\rm span}\{ \phi^\ell_i: 1\leq
i \leq M_\ell \}$, with $\phi^{\ell}_i$ denoting the nodal hat basis
functions of $S^1(D,\cT_L)$. By $k$-orthogonality of the $\psi^{n}_m$, we
have \eqref{eq:bMulti}, and for each $1\leq i,i' \leq M_{\ell} = {\rm
dim}(V_{\ell}) = \calO(2^{d\ell})$ there holds
\begin{equation}\label{eq:matrixentry}
  \matrB^{\ell}(\bsy)_{ii'}
  \,=\,
  b(\bsy_{\{1:{s_{\ell}}\}} ; \phi^\ell_i, \phi^\ell_{i'})
  \,=\,
  \int_D
  (P_{\ell+k-1} a(\bsx,\bsy))
  \nabla \phi^\ell_i \cdot \nabla \phi^\ell_{i'} \,\rd\bsx
  \;,
\end{equation}
where $P_{\ell+k-1} a(\bsx,\bsy)$ denotes the truncated expression for
$a(\bsx,\bsy)$ appearing in \eqref{eq:bMulti}.
The matrix $\matrB^{\ell}(\bsy)$ is sparse: it has, due to the local
support of the hat functions $\phi^\ell_i$ and due to the construction of
the sequence $\{ \cT_\ell \}_{\ell \geq 0}$ of meshes, at most
$\calO(M_{\ell})$ nonvanishing entries \eqref{eq:matrixentry}.

Now consider the cost for the {\em exact evaluation} of any matrix entry
$(\matrB^{\ell}(\bsy))_{ii'}\ne 0$. Given $\ell$, $i$, $i'$, and for a
given $n\le \ell+k-1$, it follows from the assumption on the support of
$\psi^n_m$ that there are only $\calO(1)$ many functions $\psi^n_m$ such
that $\int_D \psi^n_m(\bsx)\, \nabla \phi^\ell_i \cdot \nabla
\phi^\ell_{i'} \,\rd\bsx \ne 0$. Thus the cost for evaluating
$(\matrB^{\ell}(\bsy))_{ii'}\ne 0$ is $\calO(\ell+k-1)$, which yields that
the total cost for evaluating the sparse matrix is $\calO(M_\ell\,\ell) =
\calO(M_\ell\,\log(M_\ell))$ operations. \quad\qed
\end{proof}
\subsection{Key Results} \label{ssec:key}
In the error analysis of the (single level) QMC FE method, we established
in \cite{KSS1} regularity results for the parametric solutions. In the
present multi-level QMC FE error analysis, we first establish stronger
regularity of the PDE solution simultaneously with respect to both $\bsx$
and $\bsy$. The result shown is actually more general than required in
this paper: our result covers partial derivatives of arbitrary order. To
state the result, we introduce further notation: for $\bsnu =
(\nu_j)_{j\ge 1} \in \bbN_0^\bbN$, where $\bbN_0 = \bbN\cup\{0\}$, we
define $|\bsnu| := \nu_1 + \nu_2 + \cdots$, and we refer to $\bsnu$ as a
``multi-index'' and $|\bsnu|$ as the ``length'' of~$\bsnu$. By
\[
  \indx \,:=\, \{ \bsnu \in \bbN_0^\bbN \; : \; |\bsnu| < \infty \}
\]
we denote the (countable) set of all ``finitely supported'' multi-indices
(i.e., sequences of nonnegative integers for which only finitely many
entries are nonzero). For $\bsnu\in \indx$ we denote the partial
derivative of order $\bsnu\in \indx$ of $u$ with respect to $\bsy$
by
\[
  \partial^\bsnu_\bsy u \,:=\,
  \frac{\partial^{|\bsnu|}}{\partial^{\nu_1}_{y_1} \partial^{\nu_2}_{y_2} \cdots} u\;.
\]
\begin{theorem} \label{thm:reg}
Under Assumptions \textnormal{(\textbf{A1})} and
\textnormal{(\textbf{A2})}, for every $f\in V^*$, every $\bsy\in U$ and
every $\bsnu\in\indx$, the solution $u(\cdot,\bsy)$ of the parametric weak
problem \eqref{eq:paramweakprob} satisfies
\begin{equation} \label{eq:regV}
  \left\|\partial^{\bsnu}_\bsy u(\cdot,\bsy) \right\|_V
  \,\le\,
   |\bsnu|!\, \bigg(\prod_{j\ge 1} b_j^{\nu_j}\bigg) \; \frac{\|f\|_{V^*}}{a_{\min}}
   \;,
\end{equation}
where $b_j$ is as defined in \eqref{eq:defbj}.
If, in addition, $f\in H^{-1+t}(D)$ for some $0\le t\le 1$, and if
Assumption \textnormal{(\textbf{A4})} holds, then for every $\kappa
\in(0,1]$ there holds
\begin{equation} \label{eq:regZ}
  \left\|\partial^{\bsnu}_\bsy u(\cdot,\bsy)\right\|_{Z^t}
  \, \le \, C\,
  |\bsnu|!\,
  \bigg(\prod_{j\ge 1} \bar{b}_j^{\nu_j}\bigg) \|f\|_{H^{-1+t}(D)}\, \;,
\end{equation}
where
\begin{equation} \label{eq:defbarbj}
 \bar{b}_j
 \,:=\,
 b_j
 + \kappa\, C_t \left( \|\nabla \psi_j \|_{L^\infty(D)}
 + B \,\| \psi_j \|_{L^\infty(D)} \right)\;,\qquad
 j\ge 1\;,
\end{equation}
and the constants $B$ and $C_t$ are, for $0\leq t \leq 1$, defined by
\begin{equation}\label{eq:defB}
  B
  \,:=\, \frac{1}{a_{\min}}\,
  \sup_{\bsz \in U} \| \nabla a(\cdot,\bsz) \|_{L^\infty(D)}
  <\infty\;,
  \qquad
  C_t\,:=\,
  \sup_{w\in L^2(D)} \frac{\|w\|_{H^{-1+t}(D)}}{\|w\|_{L^2(D)}}
  <\infty\;.
\end{equation}
In \eqref{eq:regZ} we have $C \leq \bar{C} \kappa^{-1} $
with $\bar{C}>0$ independent of $\kappa$.
\end{theorem}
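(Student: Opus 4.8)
The plan is to prove \eqref{eq:regV} and \eqref{eq:regZ} by induction on the length $|\bsnu|$, starting from the observation that, because $a(\bsx,\cdot)$ is affine by \eqref{eq:defaxy}, differentiating the equation in $\bsy$ is especially clean: $\partial^{\bszero}_\bsy a = a$, $\partial^{\bse_j}_\bsy a = \psi_j$, and all $\bsy$-derivatives of $a$ of length $\ge 2$ vanish.

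\emph{The $V$-bound \eqref{eq:regV}.} Applying $\partial^\bsnu_\bsy$ to the weak form $b(\bsy;u(\cdot,\bsy),v)=(f,v)$ and using the affine structure, the Leibniz sum collapses to the identity
\[
  \int_D a(\bsx,\bsy)\,\nabla\big(\partial^\bsnu_\bsy u\big)\cdot\nabla v\,\rd\bsx
  \;=\; -\sum_{j:\,\nu_j\ge 1}\nu_j\int_D \psi_j(\bsx)\,\nabla\big(\partial^{\bsnu-\bse_j}_\bsy u\big)\cdot\nabla v\,\rd\bsx ,
  \qquad v\in V ,
\]
valid for $\bsnu\ne\bszero$. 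Testing with $v=\partial^\bsnu_\bsy u$, using coercivity ($a\ge a_{\min}$) on the left and $|\psi_j|\le\|\psi_j\|_{L^\infty(D)}$ on the right, and dividing through, yields the recursion $\|\partial^\bsnu_\bsy u\|_V\le\sum_{j:\nu_j\ge1}\nu_j\,b_j\,\|\partial^{\bsnu-\bse_j}_\bsy u\|_V$. The base case is \eqref{eq:apriori_V}, and the inductive step closes because the supplied factor $(|\bsnu|-1)!$ together with $\sum_{j:\nu_j\ge1}\nu_j=|\bsnu|$ produces exactly $|\bsnu|!$ and the desired product of $b_j$'s.

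\emph{The $Z^t$-bound \eqref{eq:regZ}.} The $L^2(D)$-part of the $Z^t$-norm is immediate: bound $\|\partial^\bsnu_\bsy u\|_{L^2(D)}$ by $\|\partial^\bsnu_\bsy u\|_V$ via Poincar\'e, apply \eqref{eq:regV}, use $\|f\|_{V^*}\lesssim\|f\|_{H^{-1+t}(D)}$, and note $b_j\le\bar b_j$. For the $\Delta$-part I would rewrite the differentiated equation in non-divergence form, using $\nabla\cdot(a\nabla\cdot)=a\,\Delta(\cdot)+\nabla a\cdot\nabla(\cdot)$, to obtain for $\bsnu\ne\bszero$
\[
  \Delta\big(\partial^\bsnu_\bsy u\big)
  \;=\; -\frac1a\,\nabla a\cdot\nabla\big(\partial^\bsnu_\bsy u\big)
  \;-\;\frac1a\sum_{j:\,\nu_j\ge1}\nu_j\Big(\psi_j\,\Delta\big(\partial^{\bsnu-\bse_j}_\bsy u\big)+\nabla\psi_j\cdot\nabla\big(\partial^{\bsnu-\bse_j}_\bsy u\big)\Big),
\]
with base case $\Delta u=-\tfrac1a(f+\nabla a\cdot\nabla u)$. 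Measuring both sides in $H^{-1+t}(D)$: all terms built from gradients of (lower-order) $\bsy$-derivatives of $u$ lie in $L^2(D)$ and are controlled using the embedding constant $C_t$ of \eqref{eq:defB}, producing contributions $\propto b_j$ and $\propto B\|\psi_j\|_{L^\infty(D)}$ (the latter from the $\nabla a/a$ factors, using $b_j\|\nabla a\|_{L^\infty(D)}=B\|\psi_j\|_{L^\infty(D)}$) and $\propto\|\nabla\psi_j\|_{L^\infty(D)}$; the only terms that genuinely sit in the negative-order space — ultimately traceable to $f$ — are handled by boundedness of multiplication by $W^{1,\infty}(D)$-functions on $H^{-1+t}(D)$ (interpolating $L^\infty$-multiplication on $L^2(D)$ with $W^{1,\infty}$-multiplication on $H^{-1}(D)$). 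To make the resulting recursion close with precisely the combination $\bar b_j$ of \eqref{eq:defbarbj}, at each step one re-expresses the offending term $\psi_j\,\Delta(\partial^{\bsnu-\bse_j}_\bsy u)$ by substituting the equation satisfied by $\partial^{\bsnu-\bse_j}_\bsy u$, thereby trading its $Z^t$-norm for a $V$-norm (bounded by \eqref{eq:regV}) plus a strictly lower-order $Z^t$-contribution; the parameter $\kappa\in(0,1]$ enters as a free scaling that measures how much of the bound is carried by the (already-established) $V$-estimates versus the $Z^t$-estimates, which accounts for the prefactor $C\le\bar C\kappa^{-1}$. The factorials are again produced by $\sum_{j:\nu_j\ge1}\nu_j=|\bsnu|$, and the base case is \eqref{eq:apriori_Z}.

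The main obstacle is precisely this last point: controlling products of merely Lipschitz coefficients with negative-order distributions in $H^{-1+t}(D)$ with the \emph{sharp} dependence $\|\psi_j\|_{L^\infty(D)}$ and $\|\nabla\psi_j\|_{L^\infty(D)}$, and organizing the recursion — including the re-substitution of the lower-order equations and the choice of $\kappa$ — so that the bound closes with the exact weights $\bar b_j$ and the stated factorial and $\kappa^{-1}$ behaviour. Once the correct inductive ansatz $\|\partial^\bsnu_\bsy u\|_{Z^t}\le C|\bsnu|!\,(\prod_j\bar b_j^{\nu_j})\,\|f\|_{H^{-1+t}(D)}$ is in place, the remaining bookkeeping (which norm each lower-order term is measured in, and the combinatorial collapse of the sums) is routine.
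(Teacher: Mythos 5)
Your proposal is correct and follows essentially the same route as the paper: the $V$-bound comes from differentiating the weak form, testing with $\partial^\bsnu_\bsy u$, and inducting on the resulting recursion, while the $Z^t$-bound is obtained by passing to the non-divergence form, re-substituting the equation for $\partial^{\bsnu-\bse_j}_\bsy u$ to trade the $\psi_j\,\Delta(\cdot)$ terms for $V$-norms plus lower-order contributions, and closing the induction on the $\kappa$-weighted combination of the $V$- and negative-order estimates --- exactly the paper's device of tracking $\|\partial^{\bsnu}_{\bsy}u\|_{V}+\kappa\|g_\bsnu\|_{H^{-1+t}(D)}$ with coefficients $\bar b_j$. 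The only cosmetic difference is that you split off the $L^2(D)$-part of the $Z^t$-norm via Poincar\'e, whereas the paper bounds the whole $Z^t$-norm at once through the a priori estimate \eqref{eq:apriori_Z} applied to the auxiliary right-hand side $g_\bsnu$.
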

\begin{proof}
Assertion \eqref{eq:regV} was proved in \cite[Theorem 4.3]{CDS}. The proof
there was based on the observation that, for every $v\in V$, $\bsy\in U$
and $\bsnu\in\indx$ with $|\bsnu|\ne 0$, \eqref{eq:paramweakprob} implies
the recurrence
\begin{align} \label{eq:recur}
  &
  \left(a(\cdot,\bsy)\, \nabla (\partial^\bsnu_{\bsy} u(\cdot,\bsy)) \, , \, \nabla v\, \right)
  + \sum_{j\in \supp(\bsnu)} \nu_j
  \left( \psi_j\, \nabla (\partial^{\bsnu-\bse_j}_{\bsy}u(\cdot,\bsy)) \, , \, \nabla v\, \right)
  \,=\,0\;,
\end{align}
where $\bse_j\in \indx$ denotes the multiindex with entry $1$ in
position $j$ and zeros elsewhere, and where $\supp(\bsnu) := \{ j\in
\bbN: \nu_j \ne 0 \}$ denotes the ``support'' of $\bsnu$. Taking
$v(\bsx) = \partial^\bsnu_\bsy u(\bsx,\bsy)\in V$ in \eqref{eq:recur}
leads to
\begin{equation} \label{eq:recurbound}
  \| \partial^\bsnu_\bsy u(\cdot,\bsy) \|_V \,
  \le\,
  \sum_{ j \in \supp(\bsnu)} \nu_j\, b_j\,
  \| \partial^{\bsnu -\bse_j}_{\bsy}u(\cdot,\bsy) \|_V\;,
\end{equation}
from which \eqref{eq:regV} follows by induction.

Assertion \eqref{eq:regZ} was proved in \cite[Theorem 8.2]{CDS}
for the case $t=1$. For completeness we provide a proof for general $t$
here. We proceed once more by induction. The case $|\bsnu|=0$ is
precisely \eqref{eq:apriori_Z} and is already proved in
\cite[Theorem~4.1]{KSS1}. To obtain the bounds for $|\bsnu| \ne 0$, we
observe that, trivially, for every $\bsnu \in \indx$ and for every
$\bsy\in U$, the function $\partial^\bsnu_\bsy u(\cdot,\bsy)$ is the
solution of the Dirichlet problem
\begin{equation}\label{eq:Dnuybvp}
  -\nabla\cdot\left( a(\cdot,\bsy)\nabla (\partial^\bsnu_\bsy u(\cdot,\bsy))\right)
  \,=\,
  -g_\bsnu(\cdot,\bsy)
  \quad\mbox{in}\quad D\;,
  \qquad
  \partial^\bsnu_\bsy u(\cdot,\bsy)|_{\partial D} = 0\;,
\end{equation}
with
\[
  g_\bsnu(\cdot,\bsy)
  \,:=\,
  \nabla\cdot\left( a(\cdot,\bsy)\nabla (\partial^\bsnu_\bsy u(\cdot,\bsy))\right)
  \,=\,
  \nabla a(\cdot,\bsy) \cdot \nabla (\partial^\bsnu_\bsy u(\cdot,\bsy))
  +
  a(\cdot,\bsy) \Delta (\partial^\bsnu_\bsy u(\cdot,\bsy)) \;.
\]
Here, we used the identity
\begin{equation} \label{eq:nice-id}
  \nabla \cdot (\alpha(\bsx) \nabla w(\bsx))
  \,=\, \alpha(\bsx)\,\Delta w(\bsx)
  + \nabla\alpha(\bsx) \cdot \nabla w(\bsx)\;,
\end{equation}
which is valid for $\alpha\in W^{1,\infty}(D)$ and for any $w\in V$ such
that $\Delta w \in L^2(D)$.

The assertion \eqref{eq:regZ} will follow from \eqref{eq:apriori_Z},
which implies for the solution of problem \eqref{eq:Dnuybvp} the bound
\begin{equation}\label{eq:Wubound}
  \| \partial^\bsnu_\bsy u (\cdot,\bsy) \|_{Z^t}
  \,\le\,
  C\, \| g_\bsnu(\cdot,\bsy)\|_{H^{-1+t}(D)} \;.
\end{equation}
It remains to establish bounds for $\| g_\bsnu(\cdot,\bsy)
\|_{H^{-1+t}(D)}$. We recast \eqref{eq:recur} in strong form and obtain
from \eqref{eq:Dnuybvp}, for every $\bsy\in U$ and for every $v\in
H^{1-t}(D)$,
\begin{align*}
 &\left| \left( g_\bsnu(\cdot ,\bsy)\,,\, v \,\right) \right|
 \,=\,
  \left| \left(
 \nabla \cdot \left(a(\cdot , \bsy) \nabla (\partial^\bsnu_\bsy u(\cdot,\bsy)) \right)
 \, , \,
 v \,\right)\right|
 \\
 &\,=\,
  \left|
  \sum_{j\in \supp(\bsnu)} \nu_j
 \left(
  \nabla \psi_j \cdot \nabla (\partial^{\bsnu-\bse_j}_{\bsy}u(\cdot,\bsy))
  +
  \psi_j \Delta (\partial^{\bsnu-\bse_j}_{\bsy}u(\cdot,\bsy))
  \, , \,
  v \,\right)
  \right|
  \\
 &
 \,\le\, \sum_{j\in \supp(\bsnu)} \nu_j
 \left\|
  \nabla \psi_j(\cdot) \cdot \nabla (\partial^{\bsnu-\bse_j}_{\bsy}u(\cdot,\bsy))
  +
  \psi_j(\cdot) \Delta (\partial^{\bsnu-\bse_j}_{\bsy}u(\cdot,\bsy))
  \right\|_{H^{-1+t}(D)}
  \|v\|_{H^{1-t}(D)} \;.
\end{align*}
Dividing by $ \|v\|_{H^{1-t}(D)}$ and taking the supremum over all $v\in
H^{1-t}(D)$ yields
\begin{align} \label{eq:L2recur}
 \| g_\bsnu(\cdot,\bsy) \|_{H^{-1+t}(D)}
 &\,\le\, \sum_{j\in \supp(\bsnu)} \nu_j
 \left( \| \nabla \psi_j \|_{L^\infty(D)}
  \left\|\nabla (\partial^{\bsnu-\bse_j}_{\bsy}u(\cdot,\bsy))\right\|_{H^{-1+t}(D)}
 \right. \nonumber\\
 &\qquad\qquad\qquad\quad \left. + \| \psi_j \|_{L^\infty(D)}
  \|\Delta (\partial^{\bsnu-\bse_j}_{\bsy}u(\cdot,\bsy))\|_{H^{-1+t}(D)} \right)\;.
\end{align}

To bound the second term on the right-hand side of \eqref{eq:L2recur}, we
write \eqref{eq:Dnuybvp} with $\bsnu - \bse_j$ in place of $\bsnu$, for
every $\bsy \in U$, in the form
\begin{equation}\label{eq:Deltaeq}
  - a(\cdot,\bsy)
  \Delta (\partial^{\bsnu-\bse_j}_\bsy u(\cdot,\bsy))
  \,=\,
  \nabla a(\cdot,\bsy)\cdot \nabla (\partial^{\bsnu-\bse_j}_\bsy u(\cdot,\bsy))
  -
  g_{\bsnu - \bse_j}(\cdot,\bsy)
  \;,
\end{equation}
using again \eqref{eq:nice-id}. This implies, for every $\bsy\in U$, the
estimate
\begin{align*}
  & \| \Delta (\partial^{\bsnu-\bse_j}_\bsy u(\cdot,\bsy)) \|_{H^{-1+t}(D)}
  \,\le\,
  \frac{1}{a_{\min}}
  \| \mbox{RHS of \eqref{eq:Deltaeq}}\|_{H^{-1+t}(D)}
  \\
  &\,\le\,
  \frac{1}{a_{\min}}
  \left[
  \left(\sup_{\bsz\in U}
  \|\nabla a(\cdot,\bsz)\|_{L^\infty(D)}\right)
  \| \nabla (\partial^{\bsnu-\bse_j}_\bsy u(\cdot,\bsy)) \|_{H^{-1+t}(D)}
  +\| g_{\bsnu - \bse_j}(\cdot,\bsy) \|_{H^{-1+t}(D)}
  \right]
  \\
  &\,\le\,
  B\,C_t\, \| \partial^{\bsnu-\bse_j}_\bsy u(\cdot,\bsy) \|_V +\frac{1}{a_{\min}}\,
  \| g_{\bsnu - \bse_j}(\cdot,\bsy) \|_{H^{-1+t}(D)}\;,
\end{align*}
where $B$ and $C_t$ are as in \eqref{eq:defB}.
We insert this bound into \eqref{eq:L2recur} to obtain
\begin{align} \label{eq:L2recur2}
  \| g_\bsnu(\cdot,\bsy) \|_{H^{-1+t}(D)}
  &\,\le\,
  \sum_{j\in \supp(\bsnu)}
  \nu_j
  \left[ C_t
  \left( \| \nabla \psi_j \|_{L^\infty(D)} + B\, \| \psi_j \|_{L^\infty(D)}\right)
  \| \partial^{\bsnu-\bse_j}_{\bsy}u(\cdot,\bsy) \|_V
  \right.
  \nonumber\\
  &\qquad\qquad\qquad\qquad
   \left.
   +
  b_j\,
  \| g_{\bsnu - \bse_j}(\cdot,\bsy) \|_{H^{-1+t}(D)}
  \right]
  \;.
\end{align}
This recursive estimate for $\|g_{\bsnu}(\cdot,\bsy)\|_{H^{-1+t}(D)}$
has structure which is similar to the bound \eqref{eq:recurbound} for
$\| \partial^{\bsnu}_\bsy u(\cdot,\bsy) \|_V$. We therefore multiply
\eqref{eq:L2recur2} by $\kappa >0$ and add it to \eqref{eq:recurbound}
to obtain
\begin{align} \label{eq:recurboundW}
  &\| \partial^{\bsnu}_{\bsy}u(\cdot,\bsy) \|_{V}
  +\kappa
  \| g_\bsnu(\cdot,\bsy) \|_{H^{-1+t}(D)} \nonumber\\
  &\,\le\,
  \sum_{j\in \supp(\bsnu)}
  \nu_j\,
  b_j\,
  \left[
  \| \partial^{\bsnu-\bse_j}_{\bsy}u(\cdot,\bsy) \|_{V}
  +\kappa
  \| g_{\bsnu-\bse_j}(\cdot,\bsy) \|_{H^{-1+t}(D)}
  \right]
  \nonumber\\
  &\qquad +
  \sum_{j\in \supp(\bsnu)}
  \nu_j\,\kappa\,C_t\,
  \left( \| \nabla \psi_j \|_{L^\infty(D)} + B \| \psi_j \|_{L^\infty(D)}\right)
  \| \partial^{\bsnu-\bse_j}_{\bsy}u(\cdot,\bsy) \|_{V}
  \nonumber\\
  &\,\le\,
  \sum_{j\in \supp(\bsnu)}
  \nu_j\,
  \bar{b}_j \,
  \left[
  \| \partial^{\bsnu-\bse_j}_{\bsy}u(\cdot,\bsy) \|_{V}
  +\kappa
  \| g_{\bsnu-\bse_j}(\cdot,\bsy) \|_{H^{-1+t}(D)}
  \right],
\end{align}
where $\bar{b}_j$ is as in \eqref{eq:defbarbj}. By Assumption
(\textbf{A4}), we have $\sum_{j\ge 1} \bar{b}_j < \infty$ for any choice
of $\kappa>0$ and for any $B$.

To establish \eqref{eq:regZ} it remains to observe that the estimate
\eqref{eq:recurboundW} has the same structure as \eqref{eq:recurbound},
with the sequence $\{\bar{b}_j\}$ in place of $\{b_j\}$. For $|\bsnu| =
0$, we find using \eqref{eq:apriori_V} of Theorem~\ref{thm:weak} and
$g_\bszero = -f$ that
$$
\| u(\cdot,\bsy) \|_V + \kappa\, \| g_\bszero \|_{H^{-1+t}(D)}
\,\leq\,
\frac{1}{a_{\min}} \| f \|_{V^*} + \kappa\, \| f \|_{H^{-1+t}(D)}
\;.
$$
The same induction argument used to establish \eqref{eq:regV} applied
to the recursive estimate \eqref{eq:recurboundW} implies for all $\bsnu\in
\indx$, for every $\bsy \in U$ and for every $\kappa\in (0,1]$
\begin{align*}
  \kappa\, \| g_\bsnu(\cdot,\bsy) \|_{H^{-1+t}(D)}
  &\,\le\,
  \| \partial^{\bsnu}_{\bsy}u(\cdot,\bsy) \|_{V} + \kappa\,
  \| g_\bsnu(\cdot,\bsy) \|_{H^{-1+t}(D)}
\\
  &\,\le\,
  |\bsnu|!\,
  \bigg(\prod_{j\ge 1} \bar{b}_j^{\nu_j}\bigg)\,
  \left( \frac{\tilde{C}_t}{a_{\min}} +
  \kappa \right) \| f \|_{H^{-1+t}(D)}\;,
\end{align*}
where $\tilde{C}_t:=
 \sup_{w\in H^{-1+t}(D)} (\|w \|_{H^{-1}(D)}/\|w\|_{H^{-1+t}(D)})
 <\infty$.
Now \eqref{eq:regZ} follows from \eqref{eq:Wubound}. \quad\qed
\end{proof}

%\smallskip

To bound the first term in \eqref{eq:key} we need Theorem~\ref{thm:AN}
below. We shall make use of the following lemma which can be proved by
induction. We use the convention that an empty product is $1$.
\begin{lemma} \label{lem:recur}
Given non-negative numbers $(\beta_j)_{j\in\bbN}$, let
$(\bbA_\setv)_{\setv\subset\bbN}$ and $(\bbB_\setv)_{\setv\subset\bbN}$ be
non-negative real numbers satisfying the inequality
\[
  \bbA_\setv
  \,\le\, \sum_{k\in\setv} \beta_k\, \bbA_{\setv\setminus\{k\}} + \bbB_\setv
  \quad\mbox{for any $\setv\subset\bbN$ (including $\setv=\emptyset$)}.
\]
Then we have
\[
  \bbA_\setv
  \,\le\, \sum_{\setw\subseteq\setv} |\setw|!\, \bigg(\prod_{j\in\setw} \beta_j\bigg)\, \bbB_{\setv\setminus\setw}\;.
\]
\end{lemma}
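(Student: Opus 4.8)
The plan is to argue by induction on the cardinality $|\setv|$ of the finite set $\setv\subset\bbN$, exactly as in the proof of \eqref{eq:regV} from the scalar recurrence \eqref{eq:recurbound}. The base case $\setv=\emptyset$ is immediate: the sum over $k\in\emptyset$ is empty, so the hypothesis reads $\bbA_\emptyset\le \bbB_\emptyset$, while the right-hand side of the claimed bound has only the term $\setw=\emptyset$, which (empty product, $0!=1$) also equals $\bbB_\emptyset$. So equality of the bounds holds there.

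For the inductive step, suppose the conclusion holds for all subsets of size $<n$, and let $|\setv|=n\ge 1$. Apply the hypothesis once to $\setv$, then apply the induction hypothesis to each $\bbA_{\setv\setminus\{k\}}$ (a set of size $n-1$):
\[
  \bbA_\setv
  \,\le\, \sum_{k\in\setv}\beta_k
     \sum_{\setw\subseteq\setv\setminus\{k\}} |\setw|!\,\Big(\prod_{j\in\setw}\beta_j\Big)\,
     \bbB_{(\setv\setminus\{k\})\setminus\setw}
  \,+\,\bbB_\setv\;.
\]
In the double sum, reindex by the set $\setw' := \setw\cup\{k\}$, which is a nonempty subset of $\setv$ containing $k$; conversely every nonempty $\setw'\subseteq\setv$ arises this way from exactly $|\setw'|$ choices of $k\in\setw'$. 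Since $\beta_k\prod_{j\in\setw}\beta_j = \prod_{j\in\setw'}\beta_j$, $|\setw|! = (|\setw'|-1)!$, and $(\setv\setminus\{k\})\setminus\setw = \setv\setminus\setw'$, summing over the $|\setw'|$ admissible $k$ contributes $|\setw'|\cdot(|\setw'|-1)! = |\setw'|!$ copies of the same term $\big(\prod_{j\in\setw'}\beta_j\big)\bbB_{\setv\setminus\setw'}$. Hence the double sum collapses to $\sum_{\emptyset\ne\setw'\subseteq\setv}|\setw'|!\,\big(\prod_{j\in\setw'}\beta_j\big)\bbB_{\setv\setminus\setw'}$, and adding the leftover term $\bbB_\setv$ (which is the $\setw'=\emptyset$ contribution) gives precisely $\sum_{\setw'\subseteq\setv}|\setw'|!\,\big(\prod_{j\in\setw'}\beta_j\big)\bbB_{\setv\setminus\setw'}$, as required.

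The one point that needs a little care — and is really the only obstacle — is the combinatorial bookkeeping in the reindexing: verifying that each nonempty $\setw'$ is counted with multiplicity exactly $|\setw'|$ when one sums over the "peeled-off" index $k\in\setv$, and that the factorial prefactor upgrades correctly from $(|\setw'|-1)!$ to $|\setw'|!$. Everything else is routine, and non-negativity of the $\beta_j$, the $\bbA_\setv$, and the $\bbB_\setv$ ensures all the inequalities propagate in the right direction. \qed
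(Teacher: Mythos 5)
Your proof is correct, and it is exactly the induction on $|\setv|$ that the paper has in mind (the paper only remarks that the lemma ``can be proved by induction'' and omits the details). The base case, the substitution of the inductive hypothesis, and the reindexing $\setw'=\setw\cup\{k\}$ with multiplicity $|\setw'|$ turning $(|\setw'|-1)!$ into $|\setw'|!$ are all handled correctly.
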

\begin{theorem} \label{thm:AN}
Under Assumptions \textnormal{(\textbf{A1})}, \textnormal{(\textbf{A2})},
\textnormal{(\textbf{A4})}, and \textnormal{(\textbf{A6})}, for every
$f\in H^{-1+t}(D)$ with $0\le t\le 1$, every $G\in H^{-1+t'}(D)$ with $0
\le t'\le 1$, every $\kappa\in (0,1]$, and every $s\in \bbN$, we have
\begin{align*}
  &\|G(u^s - u^s_h)\|_{\calW_{s,\bsgamma}} \\
  &\,\le\, C\, h^{\tau}\, a_{\max}\, \|f\|_{H^{-1+t}(D)}\, \|G\|_{H^{-1+t'}(D)}
  \left(\sum_{\setu\subseteq\{1:s\}}
  \frac{[(|\setu|+3)!]^2\prod_{j\in\setu} \bar{b}_j^2}{\gamma_\setu}
  \right)^{1/2}\;,
\end{align*}
where $0 \le \tau := t + t' \le 2$, $\bar{b}_j$ is
defined in \eqref{eq:defbarbj},
and where the constant $C>0$ is independent of $s$.
\end{theorem}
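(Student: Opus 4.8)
The plan is to reduce the $\calW_{s,\bsgamma}$-norm to a pointwise-in-$\bsy$ estimate and then obtain that estimate by an Aubin--Nitsche duality argument applied to the parametric derivatives of the Galerkin error. Since each of the ``inner'' and ``outer'' integrals in \eqref{eq:norm} is over a set of measure one, two applications of the Cauchy--Schwarz inequality give
\[
  \|G(u^s-u^s_h)\|_{\calW_{s,\bsgamma}}^2
  \,\le\, \sum_{\setu\subseteq\{1:s\}} \frac{1}{\gamma_\setu}\,
  \sup_{\bsy\in U} \big| G\big(\partial^{\bse_\setu}_\bsy (u^s-u^s_h)(\cdot,\bsy)\big) \big|^2\;,
\]
where $\bse_\setu\in\indx$ has entries $1$ in the positions in $\setu$ and $0$ elsewhere, so that $\partial^{\bse_\setu}_\bsy$ is the mixed first derivative in \eqref{eq:norm}. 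It therefore suffices to prove, for every $\setu\subseteq\{1:s\}$ and every $\bsy\in U$,
\[
  \big| G\big(\partial^{\bse_\setu}_\bsy (u^s-u^s_h)(\cdot,\bsy)\big) \big|
  \,\le\, C\, h^{\tau}\, a_{\max}\, \|f\|_{H^{-1+t}(D)}\,\|G\|_{H^{-1+t'}(D)}\,(|\setu|+3)!\,\prod_{j\in\setu}\bar{b}_j
\]
with $C$ independent of $s$. The truncated coefficient $a(\cdot,(\bsy_{\{1:s\}};\bszero))$ still satisfies $(\textbf{A1})$, $(\textbf{A2})$, $(\textbf{A4})$, $(\textbf{A6})$ with constants no larger than for the full coefficient, so Theorems~\ref{thm:weak}, \ref{thm:FE} and \ref{thm:reg} apply to $u^s$ with the $b_j$, $\bar{b}_j$ of the full coefficient serving as upper bounds; accordingly we suppress the superscript~$s$ and write $u,u_h$ below.

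\emph{Step 1 (energy-norm bound for the FE error of the parametric derivatives).} For $\bsnu\in\indx$ set $\eta^\bsnu := \partial^\bsnu_\bsy(u-u_h)(\cdot,\bsy)\in V$. Because $b(\bsy;\cdot,\cdot)$ depends on $\bsy$, $\partial^\bsnu_\bsy u_h$ is \emph{not} the Ritz projection of $\partial^\bsnu_\bsy u$, so we split $\eta^\bsnu = \big(\partial^\bsnu_\bsy u - R_h^{\bsy}\partial^\bsnu_\bsy u\big) + \delta^\bsnu$, where $R_h^{\bsy}$ is the Ritz projector onto $V_h$ for $b(\bsy;\cdot,\cdot)$ and $\delta^\bsnu := R_h^{\bsy}\partial^\bsnu_\bsy u - \partial^\bsnu_\bsy u_h \in V_h$. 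For the Ritz error, C\'ea's lemma, the approximation property of $V_h$, and the regularity bound \eqref{eq:regZ} give $\|\partial^\bsnu_\bsy u - R_h^{\bsy}\partial^\bsnu_\bsy u\|_V \le C\,h^t\,|\bsnu|!\,\big(\prod_{j\ge1}\bar{b}_j^{\nu_j}\big)\,\|f\|_{H^{-1+t}(D)}$. For $\delta^\bsnu$, differentiating the Galerkin equations and \eqref{eq:paramweakprob} with respect to $\bsy$ (using $\partial_{y_j}a=\psi_j$, as in \eqref{eq:recur}) gives, for all $v_h\in V_h$, $b(\bsy;\delta^\bsnu,v_h) = -\sum_{j\in\supp(\bsnu)}\nu_j\big(\psi_j\,\nabla\eta^{\bsnu-\bse_j},\,\nabla v_h\big)$; choosing $v_h=\delta^\bsnu$ and using coercivity yields $\|\delta^\bsnu\|_V \le \sum_{j\in\supp(\bsnu)}\nu_j\,b_j\,\|\eta^{\bsnu-\bse_j}\|_V$. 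Hence $\|\eta^\bsnu\|_V$ obeys a recursion of precisely the form treated in Lemma~\ref{lem:recur}, with $\beta_j=b_j$ and inhomogeneous term $C h^t|\bsnu|!\prod_{j\ge1}\bar{b}_j^{\nu_j}\|f\|_{H^{-1+t}(D)}$; applying that lemma, using $b_j\le\bar{b}_j$ and the identity $\sum_{\setw\subseteq\setu}|\setw|!\,|\setu\setminus\setw|! = (|\setu|+1)!$, we obtain for $\bsnu=\bse_\setu$ that $\|\partial^{\bse_\setu}_\bsy(u-u_h)(\cdot,\bsy)\|_V \le C\,h^t\,(|\setu|+1)!\,\big(\prod_{j\in\setu}\bar{b}_j\big)\,\|f\|_{H^{-1+t}(D)}$. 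Since $b(\bsy;\cdot,\cdot)$ is symmetric, the adjoint solution $v_G$ (given by $b(\bsy;w,v_G)=(G,w)$ for all $w\in V$) solves a problem of the same type with $f$ replaced by $G$, so the identical argument gives $\|\partial^{\bse_\setw}_\bsy(v_G-v_{G,h})(\cdot,\bsy)\|_V \le C\,h^{t'}(|\setw|+1)!\,\big(\prod_{j\in\setw}\bar{b}_j\big)\|G\|_{H^{-1+t'}(D)}$, with $v_{G,h}$ the Galerkin FE approximation of $v_G$ (one may take the $\bar{b}_j$ with $C_t$ replaced by $\sup_{0\le t\le1}C_t<\infty$ if $t\ne t'$).

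\emph{Step 2 (duality and summation).} For fixed $\bsy$, Galerkin orthogonality of $u-u_h$ against $V_h$ and the definition of $v_G$ yield the Aubin--Nitsche identity $G(u-u_h) = b(\bsy;u-u_h,v_G-v_{G,h})$. All quantities depend analytically on $\bsy$; differentiating this scalar identity by $\partial^{\bse_\setu}_\bsy$ and using that $a(\bsx,\bsy)$ is affine in $\bsy$ --- so the Leibniz expansion of $\partial^{\bse_\setu}_\bsy b(\bsy;\cdot,\cdot)$ produces only the coefficient $a$ (undifferentiated) and the coefficients $\psi_j$ (each with a single $\partial_{y_j}$) --- gives
\[
  G\big(\partial^{\bse_\setu}_\bsy(u-u_h)\big)
  = \sum_{\setv\subseteq\setu}\big(a\,\nabla\partial^{\bse_\setv}_\bsy(u-u_h),\,\nabla\partial^{\bse_{\setu\setminus\setv}}_\bsy(v_G-v_{G,h})\big)
  + \sum_{j\in\setu}\;\sum_{\setv\subseteq\setu\setminus\{j\}}\big(\psi_j\,\nabla\partial^{\bse_\setv}_\bsy(u-u_h),\,\nabla\partial^{\bse_{(\setu\setminus\{j\})\setminus\setv}}_\bsy(v_G-v_{G,h})\big)\;,
\]
where every multinomial factor equals $1$ since the multi-indices are $\{0,1\}$-valued. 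Bounding $\|a\|_{L^\infty(D)}\le a_{\max}$ and $\|\psi_j\|_{L^\infty(D)}=a_{\min}b_j\le a_{\max}\bar{b}_j$, inserting the two bounds of Step~1, and using $\sum_{\setv\subseteq\setu}(|\setv|+1)!\,(|\setu\setminus\setv|+1)! = \tfrac16(|\setu|+3)!$ together with (for each $j\in\setu$) $\sum_{\setv\subseteq\setu\setminus\{j\}}(|\setv|+1)!\,(|(\setu\setminus\{j\})\setminus\setv|+1)! = \tfrac16(|\setu|+2)!$, we get $\big|G\big(\partial^{\bse_\setu}_\bsy(u-u_h)\big)\big| \le C\,h^{\tau}a_{\max}\|f\|_{H^{-1+t}(D)}\|G\|_{H^{-1+t'}(D)}\big(\prod_{j\in\setu}\bar{b}_j\big)\tfrac16\big[(|\setu|+3)!+|\setu|\,(|\setu|+2)!\big]$; since $\tfrac16[(|\setu|+3)!+|\setu|(|\setu|+2)!]\le(|\setu|+3)!$, this is the required pointwise bound. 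Squaring, dividing by $\gamma_\setu$, summing over $\setu\subseteq\{1:s\}$ and taking square roots gives the theorem.

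\emph{Expected main obstacle.} The crux is Step~1: because $b(\bsy;\cdot,\cdot)$ is $\bsy$-dependent, C\'ea's lemma does not apply directly to $\partial^\bsnu_\bsy(u-u_h)$, and one must isolate a genuine Ritz error (controllable by Theorem~\ref{thm:reg}) and bound the residual discrete defect $\delta^\bsnu\in V_h$ recursively through the lower-order errors $\eta^{\bsnu-\bse_j}$. It is exactly this recursion, resolved by Lemma~\ref{lem:recur}, that replaces the sequence $\{b_j\}$ by the larger sequence $\{\bar{b}_j\}$ --- whence the essential use of Assumption~$(\textbf{A4})$ here and of $(\textbf{A7})$ later --- and that produces the factorial weights. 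A secondary but essential point is the combinatorial bookkeeping: a crude estimate would replace $\sum_{\setw\subseteq\setu}|\setw|!\,|\setu\setminus\setw|!$ and the corresponding duality sum by $2^{|\setu|}$-inflated quantities, destroying the POD structure; the sharp identities $\sum_{\setw\subseteq\setu}|\setw|!\,|\setu\setminus\setw|!=(|\setu|+1)!$ and $\sum_{\setv\subseteq\setu}(|\setv|+1)!\,(|\setu\setminus\setv|+1)!=\tfrac16(|\setu|+3)!$ are what yield precisely the $[(|\setu|+3)!]^2$ appearing in the statement.
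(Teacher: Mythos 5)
Your proof is correct and follows essentially the same route as the paper: your Ritz projector $R_h^{\bsy}$ is exactly the paper's parametric FE projection $\calP_h(\bsy)$ of \eqref{eq:Ph}, your recursion for $\delta^\bsnu$ is the paper's \eqref{eq:new_3}, and the Aubin--Nitsche identity, the use of Lemma~\ref{lem:recur}, and the combinatorial identities $\sum_{\setw\subseteq\setv}|\setw|!\,|\setv\setminus\setw|!=(|\setv|+1)!$ and $\sum_{\setv\subseteq\setu}(|\setv|+1)!\,(|\setu\setminus\setv|+1)!=(|\setu|+3)!/6$ all coincide with the paper's argument. The only cosmetic difference is that you first reduce the $\calW_{s,\bsgamma}$-norm to a pointwise-in-$\bsy$ bound, whereas the paper bounds the integrand $r_\setu(\bsy)$ pointwise inside the integral representation \eqref{eq:want_1}; these are equivalent.
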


\begin{proof}
Let $g\in H^{-1+t'}(D)$ denote the representer of $G\in H^{-1+t'}(D)$.
Here, for $0<t'<1$, we have $H^{-1+t'}(D) = (H^{1-t'}_0(D))^*$ with
duality taken with respect to the ``pivot'' space $L^2(D)\simeq
(L^2(D))^*$, and with $H^{1-t'}_0(D) := (H^1_0(D),L^2(D))_{1-t'}$ defined
by interpolation. Then, with $(\cdot,\cdot)$ denoting the $H^{-1+t'}(D)
\times H^{1-t'}_0(D)$ duality pairing, we have that $G(w)=(g,w)$ for $w\in
H^{1-t^\prime}_0(D)$.

For all $\bsy\in U$, we then define $v^g(\cdot,\bsy)\in V$ and
$v_h^g(\cdot,\bsy)\in V_h$ by
\begin{align*}
  b(\bsy; w, v^g(\cdot,\bsy)) &\,=\, ( g,w )
  \;\qquad\forall w\in V\;,
  %\label{eq:nitsche_1}
  \\
  b(\bsy; w_h, v_h^g(\cdot,\bsy)) &\,=\, ( g,w_h )
  \qquad\forall w_h\in V_h\;,
  %\label{eq:nitsche_2}
\end{align*}
so that $v^g$ and $v^g_h$ are the exact and FE solutions if $f$ is
replaced by $g$.
Taking $w = u(\cdot,\bsy) - u_h(\cdot,\bsy)$, we have
\begin{align*}
  G(u(\cdot,\bsy)-u_h(\cdot,\bsy))
  &\,=\, ( g, u(\cdot,\bsy)-u_h(\cdot,\bsy) ) \\
  &\,=\, b(\bsy; u(\cdot,\bsy)-u_h(\cdot,\bsy), v^g(\cdot,\bsy))\\
  &\,=\, b(\bsy; u(\cdot,\bsy)-u_h(\cdot,\bsy), v^g(\cdot,\bsy) - v_h^g(\cdot,\bsy))\;,
\end{align*}
where we used Galerkin orthogonality
$b(\bsy;u(\cdot,\bsy)-u_h(\cdot,\bsy), v_h^g(\cdot,\bsy))= 0$.

Using the definitions of the bilinear form $b(\bsy;\cdot,\cdot)$ and the
norm $\|\cdot\|_{\calW_{s,\bsgamma}}$, we obtain
\begin{align} \label{eq:want_1}
  &\| G(u^s-u_h^s)\|_{\calW_{s,\bsgamma}} \nonumber\\
  &\,=\,
  \left(\sum_{\setu\subseteq\{1:s\}} \frac{1}{\gamma_\setu}
  \int_{[-\frac{1}{2},\frac{1}{2}]^{|\setu|}}
  \left|
  \int_{[-\frac{1}{2},\frac{1}{2}]^{s-|\setu|}}
  r_\setu(\bsy_\setu;\bsy_{-\setu};\bszero)
  \,\rd\bsy_{-\setu}
  \right|^2
  \rd\bsy_\setu\right)^{1/2}\;,
\end{align}
where we define for all $\bsy\in U$
\[
  r_\setu(\bsy)
  \,:=\, \int_D \frac{\partial^{|\setu|}}{\partial \bsy_\setu}
  \Big( a(\bsx,\bsy)\,\nabla (u-u_h)(\bsx,\bsy)\cdot\nabla (v^g - v_h^g)(\bsx,\bsy)\Big)\,\rd\bsx\;.
\]
For the remainder of this proof, we will use the short-hand notation
$\partial_\setu$ for the mixed first partial derivatives with respect to
the variables $y_j$ for $j\in\setu$. From the definition of $a(\bsx,\bsy)$
we see that
\begin{align*}
  r_\setu(\bsy)
  &\,=\, \int_D a(\bsx,\bsy)\, \partial_\setu
  \Big(\nabla (u-u_h)(\bsx,\bsy)\cdot\nabla (v^g - v_h^g)(\bsx,\bsy)\Big)\,\rd\bsx \\
  &\qquad
  + \sum_{k\in\setu} \int_D \psi_{k}(\bsx)\, \partial_{\setu\setminus\{k\}}
  \Big(\nabla (u-u_h)(\bsx,\bsy)\cdot\nabla (v^g - v_h^g)(\bsx,\bsy)\Big) \,\rd\bsx\\
 &\,=\, \int_D a(\bsx,\bsy) \sum_{\setv\subseteq\setu}
  \nabla \partial_\setv (u-u_h)(\bsx,\bsy) \cdot
  \nabla \partial_{\setu\setminus\setv} (v^g - v_h^g)(\bsx,\bsy) \,\rd\bsx\\
  &\qquad + \sum_{{k}\in\setu} \int_D \psi_{{k}}(\bsx)\,
  \sum_{\setv\subseteq\setu\setminus\{{k}\}}
  \nabla \partial_\setv (u-u_h)(\bsx,\bsy) \cdot
  \nabla \partial_{(\setu\setminus\{{k}\})\setminus\setv} (v^g - v_h^g)(\bsx,\bsy) \,\rd\bsx\;,
\end{align*}
where in both terms we used the product rule $\partial_\setu (AB) =
\sum_{\setv\subseteq\setu} (\partial_\setv
A)(\partial_{\setu\setminus\setv} B)$.
Thus
\begin{align} \label{eq:int_Ru}
  \left| r_\setu(\bsy) \right|
  &\,\le\, a_{\max}\, \sum_{\setv\subseteq\setu}
  \|\partial_\setv (u-u_h)(\cdot,\bsy)\|_V \,
  \|\partial_{\setu\setminus\setv} (v^g - v_h^g)(\cdot,\bsy)\|_V
  \\
  &\qquad + \sum_{k \in\setu} \|\psi_{k}\|_{L^\infty(D)}\,
  \sum_{\setv\subseteq\setu\setminus\{k\}}
  \|\partial_\setv (u-u_h)(\cdot,\bsy)\|_V \,
  \|\partial_{(\setu\setminus\{k\})\setminus\setv} (v^g - v_h^g)(\cdot,\bsy)\|_V\;. \nonumber
\end{align}

To continue, we need to obtain an estimate for $\|\partial_\setv
(u-u_h)(\cdot,\bsy)\|_V$. Let $\calI:V\to V$ denote the identity operator,
and for $\bsy\in U$ let $\calP_h = \calP_h(\bsy) :V\to V_h$ denote the
parametric FE projection defined by
\begin{equation} \label{eq:Ph}
b(\bsy; \calP_h w, z_h) = b(\bsy;w,z_h) \qquad\forall\, w\in V, \; z_h\in V_h\; .
\end{equation}
Then we have $u_h= \calP_h u\in V_h$ and $\partial_\setv u_h\in V_h$, and
hence $(\calI - \calP_h)\partial_\setv u_h = 0$. Thus
\begin{align} \label{eq:new_1}
  \|\partial_\setv (u-u_h)(\cdot,\bsy)\|_V
  & \,=\, \| \calP_h \partial_\setv (u-u_h)(\cdot,\bsy) + (\calI - \calP_h) \partial_\setv u(\cdot,\bsy)\|_V \nonumber\\
  & \,\le\, \| \calP_h \partial_\setv (u-u_h)(\cdot,\bsy)\|_V \,+\, \|(\calI - \calP_h) \partial_\setv u(\cdot,\bsy)\|_V\;.
\end{align}
Recall that Galerkin orthogonality gives
$b(\bsy;u(\cdot,\bsy)-u_h(\cdot,\bsy), z_h)= 0$ for all $z_h\in V_h$. Upon
differentiating with respect to $\bsy_\setv$, we obtain for all $z_h\in
V_h$ that
\begin{align} \label{eq:new_2}
 &\int_D a(\bsx,\bsy)\, \nabla \big(\partial_\setv (u-u_h)(\bsx,\bsy)\big)\cdot\nabla z_h(\bsx)\,\rd\bsx
 \nonumber \\
 &\,=\,
 - \sum_{k\in\setv} \int_D \psi_{k}(\bsx)\,
  \nabla \partial_{\setv\setminus\{k\}} (u-u_h)(\bsx,\bsy)\cdot\nabla z_h(\bsx)\,\rd \bsx\;.
\end{align}
Using again the definition \eqref{eq:Ph} of $\calP_h$, we may replace
$\partial_\setv (u-u_h)$ on the left-hand side of \eqref{eq:new_2} by
$\calP_h
\partial_\setv (u-u_h)$. Taking $z_h = \calP_h
\partial_\setv (u-u_h)(\cdot,\bsy)$, we then obtain
\begin{align*}
 & a_{\min}\, \|\calP_h \partial_\setv (u-u_h)(\cdot,\bsy)\|_V^2 \\
 &\,\le\,
 \sum_{k\in\setv} \| \psi_{k}\|_{L^\infty(D)}\, \|\partial_{\setv\setminus\{k\}} (u-u_h)(\cdot,\bsy)\|_V\,
 \|\calP_h \partial_\setv (u-u_h)(\cdot,\bsy)\|_V,
\end{align*}
which in turn yields
\begin{align} \label{eq:new_3}
 \|\calP_h \partial_\setv (u-u_h)(\cdot,\bsy)\|_V
  \,\le\, \sum_{k\in\setv} b_k\, \|\partial_{\setv\setminus\{k\}} (u-u_h)(\cdot,\bsy)\|_V\;.
\end{align}
Substituting \eqref{eq:new_3} into \eqref{eq:new_1} gives
\begin{align*} %\label{eq:nice_1}
  \|\partial_\setv (u-u_h)(\cdot,\bsy)\|_V
  &\,\le\, \sum_{k\in\setv} b_k\, \|\partial_{\setv\setminus\{k\}} (u-u_h)(\cdot,\bsy)\|_V
  + \|(\calI - \calP_h) \partial_{\setv} u(\cdot,\bsy)\|_V\;,
\end{align*}
from which we conclude using Lemma~\ref{lem:recur} that
\begin{align*} %\label{eq:nice_1}
  \|\partial_\setv (u-u_h)(\cdot,\bsy)\|_V
  &\,\le\, \sum_{\setw\subseteq\setv}
  |\setw|!\, \bigg(\prod_{k\in\setw} b_k\bigg)\,
  \|(\calI - \calP_h) \partial_{\setv\setminus\setw} u(\cdot,\bsy)\|_V\;.
\end{align*}
Next we use the FE estimate that for all $\bsy\in U$ and $w\in V$ we have
$\|(\calI-\calP_h(\bsy))w\|_V \le C\, h^t\, \|w\|_{Z^t}$ (in particular,
this implies \eqref{eq:FE1} in Theorem~\ref{thm:FE}). This yields
\begin{align} \label{eq:nice_1}
  \|\partial_\setv (u-u_h)(\cdot,\bsy)\|_V
  &\,\le\, C\, h^t \sum_{\setw\subseteq\setv}
  |\setw|!\, \bigg(\prod_{k\in\setw} b_k\bigg)\,
  \|\partial_{\setv\setminus\setw} u(\cdot,\bsy)\|_{Z^t} \nonumber\\
  &\,\le\, C\, h^t\,\|f\|_{H^{-1+t}(D)} \sum_{\setw\subseteq\setv}
  |\setw|!\, \bigg(\prod_{k\in\setw} b_k\bigg)\,
  |\setv\setminus\setw|!\,\bigg(\prod_{j\in\setv\setminus\setw} \bar{b}_j\bigg) \nonumber\\
  %&\,\le\, C\, h^t\,\|f\|_{H^{-1+t}(D)} \bigg(\prod_{j\in\setv}\bar{b}_j\bigg)
  %\sum_{\setw\subseteq\setv} |\setw|!\, |\setv\setminus\setw|! \\
  &\,\le\, C\, h^t\,\|f\|_{H^{-1+t}(D)} (|\setv|+1)!\,
  \prod_{j\in\setv}\bar{b}_j\;,
\end{align}
where the second inequality follows from \eqref{eq:regZ} in
Theorem~\ref{thm:reg}, and the final step follows from $b_k\le \bar{b}_k$
and the identity $\sum_{\setw\subseteq\setv}
|\setw|!\,|\setv\setminus\setw|! = (|\setv|+1)!$. Throughout, $C>0$
denotes a generic constant.

Similarly, %it follows from \eqref{eq:nitsche_1} and \eqref{eq:nitsche_2},
with $f$ replaced by $g$, $u$ replaced by $v^g$, $u_h$ replaced by
$v^g_h$, $t$ replaced by~$t'$, and $\setv$ replaced by
$\setu\setminus\setv$, we obtain
\begin{align} \label{eq:nice_2}
  \|\partial_{\setu\setminus\setv} (v^g-v_h^g)(\cdot,\bsy)\|_V
  &\,\le\, C\,h^{t'}\,\|g\|_{H^{-1+t'}(D)}\, (|\setu\setminus\setv|+1)!\,
  \prod_{j\in\setu\setminus\setv} \bar{b}_j\;.
\end{align}
Using \eqref{eq:nice_1} and \eqref{eq:nice_2} and the identity
$\sum_{\setv\subseteq\setu} (|\setv|+1)!\,(|\setu\setminus\setv|+1)! =
(|\setu|+3)!/6$, we obtain from \eqref{eq:int_Ru}
\begin{align*}
  \left| r_\setu(\bsy) \right|
  &\,\le\, C\,h^{t+t'}\,a_{\max}\,\|f\|_{H^{-1+t}(D)}\,\|g\|_{H^{-1+t'}(D)}\,
  \tfrac{1}{6}(|\setu|+3)!\,
  \prod_{j\in\setu} \bar{b}_j \\
  &\qquad + C\,h^{t+t'}\,\|f\|_{H^{-1+t}(D)}\,\|g\|_{H^{-1+t'}(D)}
  \sum_{k\in\setu} \|\psi_{k}\|_{L^\infty(D)}\, \tfrac{1}{6}(|\setu|+2)!\,\prod_{j\in\setu\setminus\{k\}}\,\bar{b}_{j} \\
  &\,\le\, C\,h^{t+t'}\,a_{\max}\,\|f\|_{H^{-1+t}(D)}\,\|G\|_{H^{-1+t'}(D)}\,
  (|\setu|+3)!\, \prod_{j\in\setu} \bar{b}_j\;,
\end{align*}
where we used the estimate $\|\psi_{k}\|_{L^\infty(D)} = a_{\min}\,
b_{k}\le a_{\max}\, \bar{b}_{k}$. Substituting this estimate into
\eqref{eq:want_1} completes the proof. \quad\qed
%\todo{Check: let $|\setu| = n$,
%\begin{align*}
% \sum_{\setv\subseteq\setu} (|\setv|+1)!\, (|\setu\setminus\setv|+1)!
% &\,=\, \sum_{i=0}^n \sum_{\satop{\setv\subseteq\setu}{|\setv|=i}} (i+1)!\, (n-i+1)! \\
% &\,=\, \sum_{i=0}^n \binom{n}{i} (i+1)!\, (n-i+1)! \\
% &\,=\, n! \sum_{i=0}^n (i+1)\, (n-i+1) \\
% &\,=\, n! \sum_{j=1}^{n+1} j\, (n-j+2) \\
% &\,=\, n! \left((n+2)\sum_{j=1}^{n+1} j - \sum_{j=1}^{n+1} j^2\right) \\
% &\,=\, n! \left((n+2) \frac{(n+1)(n+2)}{2} - \frac{(n+1)(n+2)(2n+3)}{6}\right) \\
% &\,=\, n! \frac{(n+1)(n+2)[3(n+2)-(2n+3)]}{6} \\
% &\,=\, n! \frac{(n+1)(n+2)(n+3)}{6} \\
% &\,=\, \frac{(n+3)!}{6}.
%\end{align*}
%}
\end{proof}

As we remarked earlier, if $k$-orthogonality \eqref{eq:orthprop} does not
hold and if $s_\ell > s_{\ell-1}$, the second term in \eqref{eq:key} is
generally nonzero. We estimate it in the following result.

\begin{theorem} \label{thm:trunc-ML-new}
Under Assumptions \textnormal{(\textbf{A1})} and
\textnormal{(\textbf{A2})}, for every $f\in V^*$, every $G\in V^*$, every
$h>0$, and every $\ell\ge 1$,
\begin{align} \label{eq:Thm8a-1}
  &\|G(u_h^{s_\ell}-u_h^{s_{\ell-1}})\|_{\calW_{s_\ell,\bsgamma}} \nonumber\\
  &\,\le\,
  \frac{\|f\|_{V^*}\,\|G\|_{V^*}}{a_{\min}}
  \Bigg[
  \bigg(\frac{1}{2} \sum_{j=s_{\ell-1}+1}^{s_\ell} b_j\bigg)^2
  \sum_{\setu\subseteq\{1:s_{\ell-1}\}} \!\!
  \frac{[(|\setu|+1)!]^2\,\prod_{j\in\setu} b_j^2}{\gamma_\setu} \nonumber
  \\
  &\qquad\qquad\qquad\qquad\qquad\qquad\qquad
  + \sum_{\satop{\setu\subseteq\{1:s_{\ell}\}}{\setu\cap\{s_{\ell-1}+1:s_\ell\}\ne\emptyset}}\!\!
  \frac{[(|\setu|)!]^2\,\prod_{j\in\setu} b_j^2}{\gamma_\setu}
  \Bigg]^{1/2},
\end{align}
where $b_j$ is defined in \eqref{eq:defbj}. In addition, if $s_{\ell-1}\ne
s_\ell$, and Assumptions \textnormal{(\textbf{A3})} and
\textnormal{(\textbf{A5})} hold, and the weights $\gamma_\setu$ are such
that
\begin{equation} \label{eq:nasty}
  \sum_{\satop{\setu\subseteq\{1:s_{\ell}\}}{\setu\cap\{s_{\ell-1}+1:s_\ell\}\ne\emptyset}}\!\!
  \frac{[(|\setu|)!]^2\,\prod_{j\in\setu} b_j^2}{\gamma_\setu}
  %\,=\, \calO \left(
  \,\le\, C\,
  s_{\ell-1}^{-2\alpha}
  \sum_{\setu\subseteq\{1:s_{\ell}\}} \!\!
  \frac{[(|\setu|+n)!]^2\,\prod_{j\in\setu} b_j^2}{\gamma_\setu} %\right)
\end{equation}
for some $\alpha>0$ and integer $n\ge 1$, then
\begin{align} \label{eq:Thm8a-2}
  &\|G(u_h^{s_\ell}-u_h^{s_{\ell-1}})\|_{\calW_{s_\ell,\bsgamma}} \nonumber \\
  &\,\le\, \tilde{C}\, \|f\|_{V^*}\,\|G\|_{V^*}\,
  s_{\ell-1}^{-\min(1/p-1,\alpha)}
  \Bigg(\sum_{\setu\subseteq\{1:s_{\ell}\}} \!\!
  \frac{[(|\setu|+n)!]^2\,\prod_{j\in\setu} b_j^2}{\gamma_\setu}
  \Bigg)^{1/2}.
\end{align}
Both $C,\tilde{C}>0$ are generic constants which are independent of
$s_\ell$ and $s_{\ell-1}$.
\end{theorem}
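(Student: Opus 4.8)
The plan is to estimate, for each subset $\setu\subseteq\{1:s_\ell\}$ and uniformly in $\bsy\in U$, the mixed first parametric derivative $\partial_\setu w(\cdot,\bsy)$ of the FE difference $w:=u_h^{s_\ell}-u_h^{s_{\ell-1}}$ in the $V$-norm, and then to insert these bounds into the definition \eqref{eq:norm} of $\|\cdot\|_{\calW_{s_\ell,\bsgamma}}$. Here $\partial_\setu$ denotes the mixed first derivative with respect to $\{y_j:j\in\setu\}$; since $u_h^{s}(\cdot,\bsy)$ solves a linear system whose matrix depends affinely on $\bsy$ and is uniformly positive definite on $U$ by coercivity, $u_h^{s}$ is smooth in $\bsy$ on $U$ and these derivatives exist. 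Using $\partial_\setu\big(G(w)\big)=G(\partial_\setu w)$, $|G(\partial_\setu w)|\le\|G\|_{V^*}\|\partial_\setu w\|_V$, and the fact that the inner integral in \eqref{eq:norm} is over a unit-volume cube, one reduces the claim to the bound
\[
  \|G(w)\|_{\calW_{s_\ell,\bsgamma}}^2
  \,\le\,
  \|G\|_{V^*}^2
  \sum_{\setu\subseteq\{1:s_\ell\}}\frac{1}{\gamma_\setu}
  \Big(\sup_{\bsy\in U}\|\partial_\setu w(\cdot,\bsy)\|_V\Big)^2\;,
\]
so only $\sup_{\bsy\in U}\|\partial_\setu w(\cdot,\bsy)\|_V$ needs to be controlled, after which the sum over $\setu$ is split according to whether $\setu$ meets the ``new'' coordinates $\{s_{\ell-1}+1:s_\ell\}$ (these two cases exhaust $\{\setu\subseteq\{1:s_\ell\}\}$).

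Writing $a^{s_\ell}$ for the coefficient \eqref{eq:defaxy} truncated at $s_\ell$ terms and $\delta a(\bsx,\bsy):=\sum_{j=s_{\ell-1}+1}^{s_\ell}y_j\psi_j(\bsx)$, subtracting the Galerkin equations for $u_h^{s_\ell}$ and $u_h^{s_{\ell-1}}$ yields
\[
  \int_D a^{s_\ell}(\bsx,\bsy)\,\nabla w(\bsx,\bsy)\cdot\nabla v_h(\bsx)\,\rd\bsx
  \,=\,
  -\int_D \delta a(\bsx,\bsy)\,\nabla u_h^{s_{\ell-1}}(\bsx,\bsy)\cdot\nabla v_h(\bsx)\,\rd\bsx
  \qquad\forall\, v_h\in V_h\;.
\]
If $\setu\cap\{s_{\ell-1}+1:s_\ell\}\ne\emptyset$, then $u_h^{s_{\ell-1}}$ is independent of those coordinates, hence $\partial_\setu w=\partial_\setu u_h^{s_\ell}$; the induction behind \eqref{eq:regV} in Theorem~\ref{thm:reg}, applied to $u_h^{s_\ell}$ and tested with $\partial_\setu u_h^{s_\ell}\in V_h$ (the FE right-hand side is $\bsy$-independent), gives $\|\partial_\setu u_h^{s_\ell}(\cdot,\bsy)\|_V\le|\setu|!\,\big(\prod_{j\in\setu}b_j\big)\|f\|_{V^*}/a_{\min}$ — the factor $[(|\setu|)!]$ of the second sum in \eqref{eq:Thm8a-1}.

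If instead $\setu\subseteq\{1:s_{\ell-1}\}$, I differentiate the displayed equation for $w$ with respect to $\bsy_\setu$; since $a^{s_\ell}$ is affine and $\delta a$ is independent of the coordinates in $\setu$, testing with $v_h=\partial_\setu w\in V_h$ and using $a^{s_\ell}\ge a_{\min}$ gives
\[
  \|\partial_\setu w(\cdot,\bsy)\|_V
  \,\le\,
  \sum_{k\in\setu}b_k\,\|\partial_{\setu\setminus\{k\}} w(\cdot,\bsy)\|_V
  \,+\,\frac{\|\delta a(\cdot,\bsy)\|_{L^\infty(D)}}{a_{\min}}\,\|\partial_\setu u_h^{s_{\ell-1}}(\cdot,\bsy)\|_V\;.
\]
Since $|y_j|\le\tfrac12$ gives $\|\delta a(\cdot,\bsy)\|_{L^\infty(D)}/a_{\min}\le\tfrac12\sum_{j=s_{\ell-1}+1}^{s_\ell}b_j$, and the FE regularity bound controls $\|\partial_\setu u_h^{s_{\ell-1}}(\cdot,\bsy)\|_V$ by $|\setu|!\,\big(\prod_{j\in\setu}b_j\big)\|f\|_{V^*}/a_{\min}$, Lemma~\ref{lem:recur} (with $\beta_k=b_k$) together with $\sum_{\setw\subseteq\setu}|\setw|!\,|\setu\setminus\setw|!=(|\setu|+1)!$ produces $\|\partial_\setu w(\cdot,\bsy)\|_V\le\tfrac12\big(\sum_{j=s_{\ell-1}+1}^{s_\ell}b_j\big)(|\setu|+1)!\,\big(\prod_{j\in\setu}b_j\big)\|f\|_{V^*}/a_{\min}$ — the factor $\big(\tfrac12\sum b_j\big)\,[(|\setu|+1)!]$ of the first sum in \eqref{eq:Thm8a-1}. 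Inserting the two cases into the displayed bound for $\|G(w)\|_{\calW_{s_\ell,\bsgamma}}^2$ and splitting the sum over $\setu\subseteq\{1:s_\ell\}$ into $\{\setu\subseteq\{1:s_{\ell-1}\}\}$ and $\{\setu:\setu\cap\{s_{\ell-1}+1:s_\ell\}\ne\emptyset\}$ yields exactly \eqref{eq:Thm8a-1}.

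For \eqref{eq:Thm8a-2}, the Stechkin-type estimate \eqref{eq:stechkin} (using (\textbf{A3}) and (\textbf{A5})) gives $\tfrac12\sum_{j=s_{\ell-1}+1}^{s_\ell}b_j\le\tfrac12\sum_{j\ge s_{\ell-1}+1}b_j\le C\,s_{\ell-1}^{-(1/p-1)}$; in the first sum of \eqref{eq:Thm8a-1} I enlarge $\{1:s_{\ell-1}\}$ to $\{1:s_\ell\}$ and $(|\setu|+1)!$ to $(|\setu|+n)!$, and for the second sum I invoke hypothesis \eqref{eq:nasty}, so the bracket in \eqref{eq:Thm8a-1} is at most $\big(C^2 s_{\ell-1}^{-2(1/p-1)}+C\,s_{\ell-1}^{-2\alpha}\big)$ times $\sum_{\setu\subseteq\{1:s_\ell\}}[(|\setu|+n)!]^2\prod_{j\in\setu}b_j^2/\gamma_\setu$; since $s_{\ell-1}^{-2(1/p-1)}+s_{\ell-1}^{-2\alpha}\le 2\,s_{\ell-1}^{-2\min(1/p-1,\alpha)}$, taking square roots and absorbing $1/a_{\min}$ and the constants into $\tilde C$ gives \eqref{eq:Thm8a-2}. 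The one genuinely delicate point is the case $\setu\subseteq\{1:s_{\ell-1}\}$: one must differentiate the Galerkin identity for the \emph{difference} $w$ (not for $u_h^{s_\ell}$ and $u_h^{s_{\ell-1}}$ separately) and verify that the source term $\delta a\,\nabla\partial_\setu u_h^{s_{\ell-1}}$ feeds the uniform factor $\tfrac12\sum_{j=s_{\ell-1}+1}^{s_\ell}b_j$ into Lemma~\ref{lem:recur}; the remainder is bookkeeping already seen in the proofs of Theorems~\ref{thm:trunc}, \ref{thm:reg} and \ref{thm:AN}.
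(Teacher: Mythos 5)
Your proposal is correct and follows essentially the same route as the paper's proof: the same case split on whether $\setu$ meets $\{s_{\ell-1}+1:s_\ell\}$, the same differentiated Galerkin identity for the difference $u_h^{s_\ell}-u_h^{s_{\ell-1}}$ tested with $\partial_\setu(u_h^{s_\ell}-u_h^{s_{\ell-1}})$, the same application of Lemma~\ref{lem:recur} with the factorial identity to produce $(|\setu|+1)!$, and the same use of \eqref{eq:stechkin} and \eqref{eq:nasty} for the second assertion. No gaps.
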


\begin{proof}
As in the proof of Theorem~\ref{thm:AN}, we will use the short-hand
notation $\partial_\setu$ for the mixed first partial derivatives with
respect to the variables $y_j$ for $j\in\setu$. For any $\bsy\in U$,
$u_h^{s_\ell}(\cdot,\bsy)$ and $u_h^{s_{\ell-1}}(\cdot,\bsy)$ are the
solutions of the variational problems:
\begin{align}
  ( a^{s_\ell}(\cdot,\bsy) \nabla u_h^{s_\ell}(\cdot,\bsy) , \nabla z_h )
  &\,=\, (f,z_h) \qquad \forall\, z_h \in V_h\;, \label{eq:one} \\
  ( a^{s_{\ell-1}}(\cdot,\bsy) \nabla u_h^{s_{\ell-1}}(\cdot,\bsy),\nabla z_h )
  &\,=\, (f,z_h) \qquad\forall\, z_h \in V_h\;. \label{eq:two}
\end{align}
To estimate
$\|G(u_h^{s_\ell}-u_h^{s_{\ell-1}})\|_{\calW_{s_\ell,\bsgamma}}$, we make
use of the inequality
\[
  |\partial_\setu (G(u_h^{s_\ell}-u_h^{s_{\ell-1}}))(\bsy)|
  \,\le\, \|G\|_{V^*}\, \|\partial_\setu (u_h^{s_\ell}-u_h^{s_{\ell-1}})(\cdot,\bsy)\|_V.
\]

If $\setu\cap\{s_{\ell-1}+1:s_\ell\}\ne\emptyset$, then it follows from
\eqref{eq:regV} of Theorem~\ref{thm:reg} that
\begin{equation} \label{eq:piece1a}
  \|\partial_\setu (u_h^{s_\ell}-u_h^{s_{\ell-1}})(\cdot,\bsy)\|_V
  \,=\, \|\partial_\setu u_h^{s_\ell}(\cdot,\bsy)\|_V
  \,\le\, |\setu|!\bigg(\prod_{j\in\setu} b_j\bigg) \frac{\|f\|_{V^*}}{a_{\min}}\;.
\end{equation}
On the other hand, if $\setu\subseteq\{1:s_{\ell-1}\}$ then we subtract
\eqref{eq:two} from \eqref{eq:one} to obtain the equation $(
a^{s_\ell}(\cdot,\bsy)\nabla u_h^{s_\ell}(\cdot,\bsy)-
a^{s_{\ell-1}}(\cdot,\bsy) \nabla u_h^{s_{\ell-1}}(\cdot,\bsy), \nabla
z_h) \,=\, 0$ for all $z_h \in V_h$, or equivalently,
\begin{align*}
  &( a^{s_\ell}(\cdot,\bsy) \nabla(u_h^{s_\ell}(\cdot,\bsy)-u_h^{s_{\ell-1}}(\cdot,\bsy)),\nabla z_h )
  \\
  &\,=\, - ( (a^{s_\ell}(\cdot,\bsy)-a^{s_{\ell-1}}(\cdot,\bsy))\nabla u_h^{s_{\ell-1}}(\cdot,\bsy),\nabla z_h )
  \qquad\forall\, z_h\in V_h\;.
\end{align*}
Upon differentiating with respect to $\bsy_\setu$ for
$\setu\subseteq\{1:s_{\ell-1}\}$, we obtain
\begin{align*}
  &\int_D a^{s_\ell}(\bsx,\bsy)\,\nabla \partial_\setu (u_h^{s_\ell}-u_h^{s_{\ell-1}})(\bsx,\bsy)
  \cdot\nabla z_h(\bsx)\,\rd\bsx \\
  &\,=\, - \sum_{k\in\setu} \int_D \psi_k(\bsx)\,\nabla \partial_{\setu\setminus\{k\}} (u_h^{s_\ell}-u_h^{s_{\ell-1}})(\bsx,\bsy)
  \cdot\nabla z_h(\bsx)\,\rd\bsx \\
  &\qquad
  - \int_D \bigg(\sum_{j=s_{\ell-1}+1}^{s_\ell} \psi_j(\bsx)\,y_j\bigg)
  \nabla \partial_{\setu} u_h^{s_{\ell-1}}(\bsx,\bsy) \cdot\nabla z_h(\bsx)\,\rd\bsx\;.
\end{align*}
Taking $z_h = \partial_\setu (u_h^{s_\ell}-u_h^{s_{\ell-1}})(\cdot,\bsy)$,
we get using similar steps to those for obtaining~\eqref{eq:new_3},
\begin{align*}
  &\|\partial_\setu (u_h^{s_\ell}-u_h^{s_{\ell-1}})(\cdot,\bsy)\|_V \\
  &\,\le\, \sum_{k\in\setu} b_k\, \|\partial_{\setu\setminus\{k\}} (u_h^{s_\ell}-u_h^{s_{\ell-1}})(\cdot,\bsy)\|_V
  + \bigg(\frac{1}{2} \sum_{j=s_{\ell-1}+1}^{s_\ell} b_j\bigg)
  \|\partial_\setu u^{s_{\ell-1}}_h(\cdot,\bsy)\|_{V}\;.
\end{align*}
It then follows from Lemma~\ref{lem:recur} that
\begin{align} \label{eq:piece1b}
  &\|\partial_\setu (u_h^{s_\ell}-u_h^{s_{\ell-1}})(\cdot,\bsy)\|_V
  \,\le\, \bigg(\frac{1}{2} \sum_{j=s_{\ell-1}+1}^{s_\ell} b_j\bigg)
  \sum_{\setv\subseteq\setu} |\setv|! \bigg(\prod_{j\in\setv} b_j\bigg)\,
  \|\partial_{\setu\setminus\setv} u^{s_{\ell-1}}_h(\cdot,\bsy)\|_{V} \nonumber\\
  &\,\le\, \bigg(\frac{1}{2} \sum_{j=s_{\ell-1}+1}^{s_\ell} b_j\bigg)
  \sum_{\setv\subseteq\setu} |\setv|! \bigg(\prod_{j\in\setv} b_j\bigg)\,
  |\setu\setminus\setv|! \bigg(\prod_{j\in\setu\setminus\setv} b_j\bigg)\,
  \frac{\|f\|_{V^*}}{a_{\min}} \nonumber\\
  &\,\le\, \bigg(\frac{1}{2} \sum_{j=s_{\ell-1}+1}^{s_\ell} b_j\bigg)
  (|\setu|+1)! \bigg(\prod_{j\in\setu} b_j\bigg)\,
  \frac{\|f\|_{V^*}}{a_{\min}}\;,
\end{align}
where we used again \eqref{eq:regV} of Theorem~\ref{thm:reg} and the
identity $\sum_{\setv\subseteq\setu} |\setv|!\,|\setu\setminus\setv|! =
(|\setu|+1)!$.

Combining \eqref{eq:piece1a} and \eqref{eq:piece1b}, we conclude that
\begin{align*} %\label{eq:badnews}
  &\|G(u_h^{s_\ell}-u_h^{s_{\ell-1}})\|_{\calW_{s_\ell,\bsgamma}}^2 \nonumber\\
  &\,\le\, \sum_{\setu\subseteq\{1:s_{\ell-1}\}} \frac{1}{\gamma_\setu}
  \bigg[ \|G\|_{V^*}\,
  \bigg(\frac{1}{2} \sum_{j=s_{\ell-1}+1}^{s_\ell} b_j\bigg)
  (|\setu|+1)!\, \bigg(\prod_{j\in\setu} b_j\bigg)\,
  \frac{\|f\|_{V^*}}{a_{\min}}
  \bigg]^2 \nonumber\\
  &\qquad
  + \sum_{\satop{\setu\subseteq\{1:s_{\ell}\}}{\setu\cap\{s_{\ell-1}+1:s_\ell\}\ne\emptyset}}
  \frac{1}{\gamma_\setu}
  \bigg[ \|G\|_{V^*}\,
  |\setu|!\, \bigg(\prod_{j\in\setu} b_j\bigg)\,
  \frac{\|f\|_{V^*}}{a_{\min}}
  \bigg]^2,
\end{align*}
which yields the estimate \eqref{eq:Thm8a-1}. The estimate
\eqref{eq:Thm8a-2} then follows directly from \eqref{eq:stechkin} and the
condition \eqref{eq:nasty}. \qed
\end{proof}

\subsection{Error Analysis of the Multi-level QMC FE Algorithm (Continued)}
\label{ssec:err2}

We are now ready to estimate the two terms in \eqref{eq:key} for $\ell\ne
0$. To bound the first term, we use the triangle inequality
\begin{align*}
  \|G(u_{h_\ell}^{s_\ell}-u_{h_{\ell-1}}^{s_\ell})\|_{\calW_{s_\ell,\bsgamma}}
  \,\le\, \| G(u^{s_\ell} - u_{h_\ell}^{s_\ell})
    \|_{\calW_{s_\ell,\bsgamma}} + \| G(u^{s_\ell} -
    u_{h_\ell-1}^{s_\ell}) \|_{\calW_{s_\ell,\bsgamma}}\;,
\end{align*}
and then apply Theorem~\ref{thm:AN} to both terms on the right-hand side.
If $k$-orthogonality \eqref{eq:orthprop} does not hold and if $s_\ell\ne
s_{\ell-1}$, we assume \eqref{eq:nasty} holds and bound the second term in
\eqref{eq:key} using \eqref{eq:Thm8a-2} of Theorem~\ref{thm:trunc-ML-new}.
For the $\ell = 0$ term in \eqref{eq:T2-bound}, we use the estimate
\begin{align*}
  &\|G(u_{h_0}^{s_0})\|_{\calW_{s_0,\bsgamma}} \\
  &\,\le\, \left(\sum_{\setu\subseteq\{1:s_0\}} \frac{1}{\gamma_\setu}
  \int_{[-\frac{1}{2},\frac{1}{2}]^{|\setu|}}
  \bigg|
  \int_{[-\frac{1}{2},\frac{1}{2}]^{s_0-|\setu|}}
  \|G\|_{V^*}
  \bigg\|\frac{\partial^{|\setu|}u_{h_0}^{s_0}}{\partial \bsy_\setu}
  (\cdot,(\bsy_\setu;\bsy_{-\setu};\bszero))
  \bigg\|_V
  \rd\bsy_{-\setu}
  \bigg|^2
  \rd\bsy_\setu \right)^{1/2}
  \\
  &\,\le\,
  \frac{\|f\|_{V^*}\,\|G\|_{V^*}}{a_{\min}}
  \Bigg(\sum_{\setu\subseteq\{1:s_0\}}
  \frac{(|\setu|!)^2\,\prod_{j\in\setu} b_j^2}{\gamma_\setu}
  \Bigg)^{1/2}\;,
\end{align*}
which follows from an adaptation of \eqref{eq:regV} from
Theorem~\ref{thm:reg}.
Combining these estimates with \eqref{eq:error-bound},
\eqref{eq:T1-bound}, \eqref{eq:T2-bound}, \eqref{eq:key}, and
\eqref{eq:stechkin}, we obtain
\begin{align*} %\label{eq:error-combined}
  &\bbE[|I(G(u)) - Q_*^L(\cdot;G(u))|^2] \nonumber\\
  &\le C \Biggr(
  \left[
  h_L^{\tau}\, \|f\|_{H^{-1+t}(D)}\,\|G\|_{H^{-1+t'}(D)}
  \,+\, \theta_L\, s_L^{-2(1/p-1)}\, \|f\|_{V^*}\,\|G\|_{V^*}
  \right]^2
  \nonumber\\
  &\quad + \Bigg(\sum_{\emptyset\ne\setu\subseteq\{1:s_0\}}
  \!\!\!\!\gamma_\setu^\lambda\, [\rho(\lambda)]^{|\setu|}\Bigg)^{1/\lambda}\,
  [\varphi(N_0)]^{-1/\lambda}\,
  \|f\|_{V^*}^2\, \|G\|_{V^*}^2
  \sum_{\setu\subseteq\{1:s_0\}}\!\!\!\!\!\!
  \frac{(|\setu|!)^2\prod_{j\in\setu} b_j^2}{\gamma_\setu} %\\
\end{align*} %---- WARNING -- WARNING -- WARNING -- WARNING -- WARNING -- WARNING -- WARNING -- WARNING -- WARNING -- WARNING -- WARNING --
\begin{align*} %---- WARNING -- WARNING -- WARNING -- WARNING -- WARNING -- WARNING -- WARNING -- WARNING -- WARNING -- WARNING -- WARNING --
  &\quad + \sum_{\ell=1}^L
  \Bigg(\sum_{\emptyset\ne\setu\subseteq\{1:s_\ell\}}
  \!\!\!\!\gamma_\setu^\lambda\, [\rho(\lambda)]^{|\setu|}\Bigg)^{1/\lambda}\,
  [\varphi(N_\ell)]^{-1/\lambda}\, \nonumber\\
  &\qquad \cdot \Biggr[
  h_{\ell-1}^{\tau}\,
  \|f\|_{H^{-1+t}(D)}\, \|G\|_{H^{-1+t'}(D)}
  \left(
  \sum_{\setu\subseteq\{1:s_\ell\}}\!\!\!\!\!\!
  \frac{[(|\setu|+3)!]^2\prod_{j\in\setu} \bar{b}_j^2}{\gamma_\setu}
  \right)^{1/2}
  \nonumber\\
  &\qquad
  + \theta_{\ell-1}\,s_{\ell-1}^{-\min(1/p-1,\alpha)}
  \|f\|_{V^*}\,\|G\|_{V^*}
  \left(
  \sum_{\setu\subseteq\{1:s_\ell\}}\!\!\!\!\!\!
  \frac{[(|\setu|+n)!]^2\prod_{j\in\setu} b_j^2}{\gamma_\setu}
  \right)^{1/2}
  \Biggr]^2 \Biggr)\,,
\end{align*}
where we introduced the parameters $\theta_{\ell-1}\in \{0,1\}$ for
each level, analogously to \eqref{eq:T1-bound}, to handle the case where
$k$-orthogonality \eqref{eq:orthprop} holds or when $s_\ell = s_{\ell-1}$.
%When $k$-orthogonality \eqref{eq:orthprop} does not hold, we further
%impose Assumptions \textnormal{(\textbf{A3})} and
%\textnormal{(\textbf{A5})} to make use of \eqref{eq:stechkin} for
%estimating the tail sums of $b_j$.

These together with some further estimations lead to the following
simplified mean-square error bound.
\begin{theorem} \label{thm:error-simp}
Under Assumptions \textnormal{(\textbf{A1})}--\textnormal{(\textbf{A6})}
and the condition \eqref{eq:nasty} with $n=3$, for every $f\in
H^{-1+t}(D)$ with $0\le t\le 1$ and every $G\in H^{-1+t'}(D)$ with $0 \le
t'\le 1$, the mean-square error of the multi-level QMC FE algorithm
defined by \eqref{eq:MLQMCFE} can be estimated as follows
\begin{align} \label{eq:error-simp}
  &\bbE[|I(G(u)) - Q_*^L(\cdot;G(u))|^2]
  \;\le\; C\, D_{\bsgamma}(\lambda)\,
  \|f\|_{H^{-1+t}(D)}^2\,\|G\|_{H^{-1+t'}(D)}^2 \nonumber\\
  &\;\cdot
  \left[
  \left( h_L^{\tau} + \theta_L\, s_L^{-2(1/p-1)} \right)^2
  + \sum_{\ell=0}^L [\varphi(N_\ell)]^{-1/\lambda}
    \left(h_{\ell-1}^{\tau} + \theta_{\ell-1}\,s_{\ell-1}^{-\min(1/p-1,\alpha)}\right)^2
    \right]\;,
\end{align}
where
\begin{align} \label{eq:Dgamma}
  D_\bsgamma(\lambda)
  \,:=\,
  \Bigg(\sum_{|\setu|<\infty}
  \gamma_\setu^\lambda\, [\rho(\lambda)]^{|\setu|}\Bigg)^{1/\lambda}\,
  \Bigg(\sum_{|\setu|<\infty}
  \frac{[(|\setu|+3)!]^2\prod_{j\in\setu} \bar{b}_j^2}{\gamma_\setu}
  \Bigg)\;,
\end{align}
with $0\le\tau:=t + t'\le 2$, $h_{-1} :=1$, $s_{-1} := 1$, $\theta_{-1} :=
0$, $\rho(\lambda)$ as in \eqref{eq:rho}, and $\bar{b}_j$ as in
\eqref{eq:defbarbj}.
In general we have $\theta_\ell = 1$ for all $\ell=0,\ldots,L$. If $s_\ell
= s_{\ell-1}$ for some $\ell\ge 1$ then $\theta_{\ell-1} = 0$. When
$k$-orthogonality \eqref{eq:orthprop} holds we have $\theta_\ell = 0$ for
all $\ell=0,\ldots,L$. Assumptions \textnormal{(\textbf{A3})} and
\textnormal{(\textbf{A5})} and the condition \eqref{eq:nasty} are not
required when $\theta_\ell= 0$ for all $\ell$.
The expectation $\bbE[\cdot]$ is with respect to the random compound shift
which is drawn from the uniform distribution over $[0,1]^{s_*}$.
The error bound \eqref{eq:error-simp} is meaningful only if
$D_\bsgamma(\lambda)$ is finite.
\end{theorem}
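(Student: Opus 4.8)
The starting point is the detailed mean-square error estimate displayed immediately before the theorem statement, obtained by substituting the $T_1$-bound \eqref{eq:T1-bound}, the QMC bound \eqref{eq:T2-bound} together with the splitting \eqref{eq:key}, Theorems~\ref{thm:AN} and~\ref{thm:trunc-ML-new}, and Stechkin's inequality \eqref{eq:stechkin} into the error decomposition \eqref{eq:error-bound}. The plan is to coarsen this estimate by three mechanical operations: (i)~unify the norms of $f$ and $G$ into a single factor $\|f\|_{H^{-1+t}(D)}^2\|G\|_{H^{-1+t'}(D)}^2$; (ii)~replace every sum over the finite index set $\{\setu\subseteq\{1:s_\ell\}\}$ by the sum over all finitely supported index sets; and (iii)~identify the resulting $\bsgamma$-dependent constant with $D_\bsgamma(\lambda)$ of \eqref{eq:Dgamma}.

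For (i), I would invoke the continuous embeddings $H^{-1+t}(D)\hookrightarrow H^{-1}(D)=V^*$ and $H^{-1+t'}(D)\hookrightarrow V^*$, valid for all $t,t'\ge 0$ (the same elementary spectral comparison underlying the constant $\tilde{C}_t$ in the proof of Theorem~\ref{thm:reg}), to replace every $\|f\|_{V^*}$ and $\|G\|_{V^*}$ in the detailed estimate by constant multiples of $\|f\|_{H^{-1+t}(D)}$ and $\|G\|_{H^{-1+t'}(D)}$, and then pull the common factor $\|f\|_{H^{-1+t}(D)}^2\|G\|_{H^{-1+t'}(D)}^2$ out of all terms.

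For (ii)--(iii), write $S:=\sum_{|\setu|<\infty}\frac{[(|\setu|+3)!]^2\prod_{j\in\setu}\bar b_j^2}{\gamma_\setu}$ and $W(\lambda):=\big(\sum_{|\setu|<\infty}\gamma_\setu^\lambda[\rho(\lambda)]^{|\setu|}\big)^{1/\lambda}$, so that $D_\bsgamma(\lambda)=W(\lambda)\,S$. In each $\ell$-summand of the QMC part one bounds $\big(\sum_{\emptyset\ne\setu\subseteq\{1:s_\ell\}}\gamma_\setu^\lambda[\rho(\lambda)]^{|\setu|}\big)^{1/\lambda}\le W(\lambda)$ (adding nonnegative terms, then monotonicity of $x\mapsto x^{1/\lambda}$), and bounds each of the finite sums $\sum_{\setu\subseteq\{1:s_\ell\}}\frac{[(|\setu|+3)!]^2\prod_{j\in\setu}\bar b_j^2}{\gamma_\setu}$, $\sum_{\setu\subseteq\{1:s_\ell\}}\frac{[(|\setu|+n)!]^2\prod_{j\in\setu}b_j^2}{\gamma_\setu}$, and (for $\ell=0$) $\sum_{\setu\subseteq\{1:s_0\}}\frac{(|\setu|!)^2\prod_{j\in\setu}b_j^2}{\gamma_\setu}$ from above by $S$, using $b_j\le\bar b_j$ (immediate from \eqref{eq:defbarbj}) together with $|\setu|!\le(|\setu|+3)!$ and $(|\setu|+n)!\le(|\setu|+3)!$, the latter precisely by the hypothesis $n=3$. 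Then for each $\ell\ge1$, with $A_\ell,B_\ell\le S$ the two sums inside the bracket, $\big[h_{\ell-1}^\tau A_\ell^{1/2}+\theta_{\ell-1}s_{\ell-1}^{-\min(1/p-1,\alpha)}B_\ell^{1/2}\big]^2\le S\,\big(h_{\ell-1}^\tau+\theta_{\ell-1}s_{\ell-1}^{-\min(1/p-1,\alpha)}\big)^2$; the $\ell=0$ QMC term is handled identically and recorded as the $\ell=0$ term of $\sum_{\ell=0}^L[\varphi(N_\ell)]^{-1/\lambda}(\cdots)^2$ under the conventions $h_{-1}:=1$, $s_{-1}:=1$, $\theta_{-1}:=0$, for which that prefactor equals $1$. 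Finally, the $T_1^2$ contribution $\big(h_L^\tau+\theta_Ls_L^{-2(1/p-1)}\big)^2\|f\|_{H^{-1+t}(D)}^2\|G\|_{H^{-1+t'}(D)}^2$ is absorbed into $C\,D_\bsgamma(\lambda)\,\|f\|_{H^{-1+t}(D)}^2\|G\|_{H^{-1+t'}(D)}^2\big(h_L^\tau+\theta_Ls_L^{-2(1/p-1)}\big)^2$, which is legitimate because the $\setu=\emptyset$ contributions force $W(\lambda)\ge1$ and $S\ge(3!)^2$, hence $D_\bsgamma(\lambda)\ge36>0$. Collecting the $T_1$ term with the $L+1$ QMC terms yields \eqref{eq:error-simp}, and the supplementary assertions are read off directly: $\theta_\ell=1$ in general, $\theta_{\ell-1}=0$ when $s_\ell=s_{\ell-1}$ or when $k$-orthogonality \eqref{eq:orthprop} holds (the latter through Theorem~\ref{thm:orthprop}, which then renders Theorem~\ref{thm:trunc-ML-new}, Assumptions~(\textbf{A3}),~(\textbf{A5}) and \eqref{eq:nasty} superfluous); the expectation is over the compound shift on $[0,1]^{s_*}$ by construction of \eqref{eq:MLQMCFE}; and the estimate is vacuous unless $D_\bsgamma(\lambda)<\infty$.

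The only point requiring care --- the ``hard part'', such as it is --- is step (ii): one must check that \emph{every} $\setu$-sum appearing in the detailed estimate, with its particular factorial power ($|\setu|!$ at the coarsest level, $(|\setu|+3)!$ from Theorem~\ref{thm:AN}, and $(|\setu|+n)!$ from Theorem~\ref{thm:trunc-ML-new}) and its particular sequence ($b_j$ or $\bar b_j$), is dominated by the single master sum $S$ built from $(|\setu|+3)!$ and $\bar b_j$. The value $n=3$ is exactly what lets the dimension-truncation sums fit under $S$, and $\bar b_j\ge b_j$ is what lets the pure-regularity sums fit; everything else is monotonicity of $t\mapsto t!$ and of $x\mapsto x^{1/\lambda}$, plus the elementary observation that if $\xi_1,\xi_2\in[0,\sqrt{S}]$ and $c_1,c_2\ge0$ then $(c_1\xi_1+c_2\xi_2)^2\le S\,(c_1+c_2)^2$.
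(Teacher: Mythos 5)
Your proposal is correct and follows essentially the same route as the paper: the paper derives the detailed level-by-level estimate in \S\ref{ssec:err2} and then passes to \eqref{eq:error-simp} by exactly the ``further estimations'' you spell out --- embedding $V^*$-norms into the $H^{-1+t}(D)$ and $H^{-1+t'}(D)$ norms, dominating every finite $\setu$-sum by the master sum built from $(|\setu|+3)!$ and $\bar b_j$ (using $b_j\le\bar b_j$ and $n=3$), and factoring out $D_\bsgamma(\lambda)$. The only addition you make is to record explicitly why the $T_1^2$ term can be absorbed (via $D_\bsgamma(\lambda)\ge 36$), which the paper leaves implicit in the generic constant $C$.
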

\subsection{Choosing the Parameter $\lambda$ and the Weights $\gamma_\setu$}
\label{ssec:weights}
Following \cite{KSS1}, we now choose the weights $\gamma_\setu$ to
minimize $D_\bsgamma(\lambda)$. We also specify the value of $\lambda$ to
get the best convergence rate possible. Note that our goal is to have
$\lambda$ as small as possible, since a smaller value of $\lambda$ yields
a better convergence rate with respect to the number of QMC points.

In the following theorem, the assumption \eqref{eq:assump-1} is implied by
Assumption (\textbf{A7}).
\begin{theorem} \label{thm:weights}
With $\bar{b}_j$ defined as in \eqref{eq:defbarbj} for fixed
$\kappa\in (0,1]$, suppose that
\begin{equation} \label{eq:assump-1}
  \sum_{j\ge 1} \bar{b}_j^q \,<\, \infty
  \qquad\mbox{for some}\quad
  0 < q \le 1\;,
\end{equation}
and when $q=1$ assume additionally that
\begin{equation} \label{eq:small-1}
  \sum_{j\ge 1} \bar{b}_j \,<\, \sqrt{6}\;.
\end{equation}
For a given $\lambda\in(1/2,1]$, the choice of weights
\begin{equation} \label{eq:choiceweight-1}
  \gamma_\setu \,=\,
  \gamma_\setu^*(\lambda) \,:=\,
  \Bigg(\frac{(|\setu|+3)!}{6}\,
  \prod_{j\in\setu} \frac{\bar{b}_j}{\sqrt{\rho(\lambda)}}
  \Bigg)^{2/(1+\lambda)}
\end{equation}
minimizes $D_\bsgamma(\lambda)$ given in \eqref{eq:Dgamma}, if
$D_{\bsgamma^*}(\lambda)<\infty$. Moreover, the choice of $\lambda$ given
by
\begin{equation} \label{eq:def_lam}
  \lambda \,=\, \lambda_q \,:=\,
  \begin{cases}
  \displaystyle\frac{1}{2-2\delta} \quad\mbox{for some}\quad
  \delta\in (0,1/2) & \mbox{when } q\in (0,2/3]\;,
  \\
  \displaystyle\frac{q}{2-q} & \mbox{when } q\in (2/3,1)\;,
  \\
  1 & \mbox{when } q = 1\;,
  \end{cases}
\end{equation}
together with $\gamma_\setu=\gamma_\setu^*(\lambda_q)$, ensures that
$D_{\bsgamma^*}(\lambda_q) < \infty$, and thus justifies the error
bound \eqref{eq:error-simp}.
%
%If we maintain the definition \eqref{eq:def_lam} of $\lambda$ but instead
%of \eqref{eq:choiceweight-1} define the weights by
%\begin{equation} \label{eq:choiceweight-2}
% \gamma_\setu \,:=\,
% \Bigg(\fk{(|\setu|+3)!} \prod_{j\in\setu} (2\,\bar{b}_j)\Bigg)^{2-q}\;,
%\end{equation}
%then $D_\bsgamma(\lambda)$ is no longer minimized by this choice of
%weights, but $D_\bsgamma(\lambda) < \infty$ still holds provided that
%$\delta<q/2$ when $q\in (0,2/3]$.
%
%\fk{In addition, if $q>p$ then the choice of weights
%\eqref{eq:choiceweight-1} satisfies the condition \eqref{eq:nasty} with
%$\beta = q/p-1$.}
\end{theorem}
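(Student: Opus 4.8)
The plan is to treat the two assertions of the theorem separately: first, that $\gamma_\setu^\ast(\lambda)$ minimizes $D_\bsgamma(\lambda)$ for any fixed $\lambda\in(1/2,1]$; second, that the choice $\lambda=\lambda_q$ makes the resulting minimal value finite. For the first part I would begin by noting that $D_\bsgamma(\lambda)$ is invariant under the rescaling $\gamma_\setu\mapsto c\,\gamma_\setu$ with $c>0$, since the first factor in \eqref{eq:Dgamma} then scales by $c$ and the second by $c^{-1}$; so the minimizer is only determined up to a positive constant. Writing $a_\setu:=[\rho(\lambda)]^{|\setu|}$ and $b_\setu:=[(|\setu|+3)!]^2\prod_{j\in\setu}\bar b_j^2$, one has $D_\bsgamma(\lambda)=A^{1/\lambda}B$ with $A:=\sum_\setu \gamma_\setu^\lambda a_\setu$ and $B:=\sum_\setu b_\setu/\gamma_\setu$.

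The key identity is
\[
  a_\setu^{1/(1+\lambda)}\,b_\setu^{\lambda/(1+\lambda)}
  \,=\,\bigl(\gamma_\setu^\lambda a_\setu\bigr)^{1/(1+\lambda)}\bigl(b_\setu/\gamma_\setu\bigr)^{\lambda/(1+\lambda)}\;,
\]
valid for every $\setu$ and every positive $\gamma_\setu$. Summing over $\setu$ and applying Hölder's inequality with conjugate exponents $1+\lambda$ and $(1+\lambda)/\lambda$ yields $S\le A^{1/(1+\lambda)}B^{\lambda/(1+\lambda)}$, where $S:=\sum_{|\setu|<\infty}a_\setu^{1/(1+\lambda)}b_\setu^{\lambda/(1+\lambda)}$; rearranging gives $D_\bsgamma(\lambda)=A^{1/\lambda}B\ge S^{(1+\lambda)/\lambda}$ for \emph{every} choice of positive weights (the bound being trivial if $A$ or $B$ is infinite). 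Equality in Hölder holds exactly when $\gamma_\setu^\lambda a_\setu$ is proportional to $b_\setu/\gamma_\setu$, i.e.\ $\gamma_\setu\propto(b_\setu/a_\setu)^{1/(1+\lambda)}$; a direct computation gives $(b_\setu/a_\setu)^{1/(1+\lambda)}=\bigl((|\setu|+3)!\prod_{j\in\setu}\bar b_j/\sqrt{\rho(\lambda)}\bigr)^{2/(1+\lambda)}$, which up to the constant factor $6^{-2/(1+\lambda)}$ (harmless by scale invariance) is precisely $\gamma_\setu^\ast(\lambda)$ from \eqref{eq:choiceweight-1}. Substituting $\gamma^\ast$ back one checks $A=B=S$ there, so $D_{\bsgamma^\ast}(\lambda)=S^{(1+\lambda)/\lambda}$, which proves $\gamma^\ast$ is a genuine minimizer; the qualifier ``if $D_{\bsgamma^\ast}(\lambda)<\infty$'' simply excludes the vacuous case $S=\infty$.

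For the second part it remains to show $S<\infty$ when $\lambda=\lambda_q$. Put $\beta:=2\lambda/(1+\lambda)$, so that
\[
  S=\sum_{|\setu|<\infty}[(|\setu|+3)!]^{\beta}\prod_{j\in\setu}\bigl(\bar b_j^{\,\beta}\,\rho(\lambda)^{1/(1+\lambda)}\bigr)
   =\sum_{\ell=0}^\infty [(\ell+3)!]^{\beta}\,e_\ell\;,
\]
where $e_\ell$ is the sum over $|\setu|=\ell$ of $\prod_{j\in\setu}(\bar b_j^{\,\beta}\rho(\lambda)^{1/(1+\lambda)})$ and is bounded by $e_\ell\le \tfrac{1}{\ell!}\bigl(\rho(\lambda)^{1/(1+\lambda)}\sum_{j\ge1}\bar b_j^{\,\beta}\bigr)^\ell=:C^\ell/\ell!$. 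A short calculation gives $\beta=\tfrac{2}{3-2\delta}\in(2/3,1)$ when $q\in(0,2/3]$ and $\lambda=\tfrac1{2-2\delta}$; $\beta=q\in(2/3,1)$ when $q\in(2/3,1)$ and $\lambda=\tfrac{q}{2-q}$; and $\beta=q=1$ when $q=1$, $\lambda=1$. In each case $\sum_{j\ge1}\bar b_j^{\,\beta}<\infty$: for $q<1$ because $\beta\ge q$ and $\bar b_j\to0$ (forced by \eqref{eq:assump-1}), so $\bar b_j^{\,\beta}\le\bar b_j^{\,q}$ for all large $j$; for $q=1$ this is \eqref{eq:assump-1} itself. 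Hence $C<\infty$.

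It remains to bound $\sum_\ell [(\ell+3)!]^{\beta}C^\ell/\ell!$. When $\beta<1$ (the cases $q<1$), Stirling gives $[(\ell+3)!]^{\beta}/\ell!\asymp \ell^{3\beta}(\ell!)^{\beta-1}$, which decays faster than any geometric sequence, so the series converges for arbitrary finite $C$. The only delicate case is $q=1$, hence $\lambda=1$ and $\beta=1$: then $[(\ell+3)!]/\ell!=(\ell+1)(\ell+2)(\ell+3)$ and the series becomes $\sum_\ell(\ell+1)(\ell+2)(\ell+3)\,C^\ell$, which converges if and only if $C<1$. Since $\rho(1)=2\zeta(2)/(2\pi^2)=1/6$, we have $C=\sqrt{\rho(1)}\sum_{j\ge1}\bar b_j=\tfrac1{\sqrt6}\sum_{j\ge1}\bar b_j$, so the extra hypothesis \eqref{eq:small-1}, $\sum_{j\ge1}\bar b_j<\sqrt6$, is exactly the condition $C<1$. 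Therefore $S<\infty$, $D_{\bsgamma^\ast}(\lambda_q)<\infty$, and the error bound \eqref{eq:error-simp} is justified. The step that needs the most care is this borderline accounting at $q=1$ — tracking the precise constant $\rho(1)=1/6$ so that the threshold $\sqrt6$ emerges — together with verifying that the Hölder estimate is actually \emph{attained} at $\gamma^\ast$ (not merely that $\gamma^\ast$ is a stationary point), which is what upgrades it from a candidate to the true minimizer.
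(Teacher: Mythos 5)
Your proof is correct, and it arrives at the same pivot quantity as the paper, namely $D_{\bsgamma^*}(\lambda)=S^{(1+\lambda)/\lambda}$ with $S$ equal to the paper's $A_\lambda$; but both halves are argued by a somewhat different route. For the optimality claim you give a self-contained H\"older argument, lower-bounding $D_\bsgamma(\lambda)\ge S^{(1+\lambda)/\lambda}$ for \emph{arbitrary} positive weights and exhibiting the equality case $\gamma_\setu\propto(b_\setu/a_\setu)^{1/(1+\lambda)}$, whereas the paper simply invokes Lemma~6.2 of \cite{KSS1}; your equality-case verification is precisely the content of that lemma, so nothing is missing, and you correctly note the scale invariance that makes the factor $6^{-2/(1+\lambda)}$ harmless. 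For the finiteness of $S$ the paper (for $\lambda<1$) performs a second H\"older split with exponents $(1+\lambda)/(2\lambda)$ and $(1+\lambda)/(1-\lambda)$, introduces auxiliary weights $\alpha_j=\bar b_j^q/\varpi$, and bounds the factorial sum via Lemma~6.3 of \cite{KSS1}; you instead group by cardinality $|\setu|=\ell$, use the multinomial bound $\sum_{|\setu|=\ell}\prod_{j\in\setu}c_j\le(\sum_j c_j)^\ell/\ell!$ (the same mechanism that underlies Lemma~6.3), and exploit that $[(\ell+3)!]^{\beta}/\ell!\asymp\ell^{3\beta}(\ell!)^{\beta-1}$ decays super-geometrically when $\beta=2\lambda/(1+\lambda)<1$. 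This is more direct, avoids the auxiliary $\alpha_j$ entirely, and requires only $\sum_j\bar b_j^{\beta}<\infty$, which you correctly verify (via $\beta\ge q$ and $\bar b_j\to 0$) is equivalent to the paper's derived condition $\lambda\ge q/(2-q)$. The delicate endpoint $q=\lambda=\beta=1$ is handled identically in both proofs: $\rho(1)=1/6$ reduces the series to $\sum_{\ell}(\ell+1)(\ell+2)(\ell+3)\bigl(\sum_j\bar b_j/\sqrt{6}\bigr)^{\ell}$, whose convergence is exactly the hypothesis \eqref{eq:small-1}.
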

%
%We remark that the weight \eqref{eq:choiceweight-2} has a practical
%advantage over \eqref{eq:choiceweight-1}, that with
%\eqref{eq:choiceweight-2} it is not necessary to make a prior choice of
%$\lambda$.
%
\begin{proof}
This proof follows closely the proof of \cite[Theorem~6.4]{KSS1}. Apart
from the simple replacement of $b_j$ by $\bar{b}_j$ and of $p$ by $q$, the
main difference is that we now have to handle a sum containing the factor
$(|\setu|+3)!$ instead of $|\setu|!$. For this we make use of
\cite[Lemma~6.3]{KSS1} with $n=3$ instead of $n=0$.

Using \cite[Lemma~6.2]{KSS1}, we see that $D_\bsgamma(\lambda)$ is
minimized by choosing $\gamma_\setu$ as in \eqref{eq:choiceweight-1} for
$|\setu|<\infty$, provided that $D_\bsgamma(\lambda) < \infty$. We add
that an overall rescaling of weights does not affect the minimization
argument. Our choice of scaling here is consistent with the convention
that $\gamma_\emptyset := 1$.

In the course of our derivation below we eventually choose the value of
$\lambda$ depending on the value of $q$, but until then $\lambda$ and $q$
will be independent.
For the weights given by \eqref{eq:choiceweight-1}, we have
\begin{align*}
  \sum_{|\setu|<\infty} (\gamma_\setu^*)^\lambda\, [\rho(\lambda)]^{|\setu|}
  \,=\, 6^{-2\lambda/(1+\lambda)}\, A_\lambda\;,
  \quad
  \sum_{|\setu|<\infty} \frac{[(|\setu|+3)!]^2 \prod_{j\in\setu} \bar{b}_j^2}{\gamma_\setu^*}
  &\,=\, 6^{2/(1+\lambda)} A_\lambda\;,
  \end{align*}
and thus $D_{\bsgamma^*}(\lambda) = A_\lambda^{1/\lambda+1}$, where
\[
  A_\lambda \,:=\,
  \sum_{|\setu|<\infty} [(|\setu|+3)!]^{2\lambda/(1+\lambda)}
  \prod_{j\in\setu} \left(\bar{b}_j^{2\lambda}\rho(\lambda)\right)^{1/(1+\lambda)}\;.
\]
%\begin{align*}
% \sum_{|\setu|<\infty} (\gamma_\setu^*)^\lambda\, [\rho(\lambda)]^{|\setu|}
% &\,=\, \sum_{|\setu|<\infty} [\fk{(|\setu|+3)!}]^{2\lambda/(1+\lambda)}
% \prod_{j\in\setu} \left(\bar{b}_j^{2\lambda}\rho(\lambda)\right)^{1/(1+\lambda)}
% \,=:\, A_\lambda
% \;,
%\end{align*}
%\begin{align*}
% \sum_{|\setu|<\infty} \frac{[\fk{(|\setu|+3)!}]^2 \prod_{j\in\setu} \bar{b}_j^2}{\gamma_\setu^*}
% &\,=\, A_\lambda\;,
%\end{align*}
%%
%and thus $D_{\bsgamma^*}(\lambda) = A_\lambda^{1/\lambda+1}$.

For $\lambda\in (1/2,1)$, we have $2\lambda/(1+\lambda)<1$ and we
further estimate $A_\lambda$ as follows:
we multiply and divide each term in the expression
by $\prod_{j\in\setu} \alpha_j^{2\lambda/(1+\lambda)}$, with $\alpha_j>0$
to be specified later, and then apply H\"older's inequality with
conjugate exponents $(1+\lambda)/(2\lambda)$ and
$(1+\lambda)/(1-\lambda)$, to obtain
\begin{align*}
  A_\lambda
  &\,=\, \sum_{|\setu|<\infty} [(|\setu|+3)!]^{2\lambda/(1+\lambda)}
  \prod_{j\in\setu} \alpha_j^{2\lambda/(1+\lambda)}
  \prod_{j\in\setu} \left(\frac{\bar{b}_j^{2\lambda}\rho(\lambda)}{\alpha_j^{2\lambda}}\right)^{1/(1+\lambda)} \\
  &\,\le\, \left(\sum_{|\setu|<\infty} (|\setu|+3)!
  \prod_{j\in\setu} \alpha_j \right)^{2\lambda/(1+\lambda)}
  \left(\sum_{|\setu|<\infty}
  \prod_{j\in\setu} \left(\frac{\bar{b}_j^{2\lambda}\rho(\lambda)}{\alpha_j^{2\lambda}}\right)^{1/(1-\lambda)}
  \right)^{(1-\lambda)/(1+\lambda)} \\
  &\,\le\,
  \left[6\left(\frac{1}{1-\sum_{j\ge 1}\alpha_j}\right)^{4}\right]^{2\lambda/(1+\lambda)}
  \exp\left(\frac{1-\lambda}{1+\lambda}\, [\rho(\lambda)]^{1/(1-\lambda)}
  \sum_{j\ge 1} \left(\frac{\bar{b}_j}{\alpha_j}\right)^{2\lambda/(1-\lambda)}
  \right)\;
\end{align*}
which holds and $A_\lambda$ is finite, see \cite[Lemma~6.3]{KSS1},
provided that
\begin{equation} \label{eq:cond_rj}
  \sum_{j\ge 1} \alpha_j \,<\, 1
  \qquad\mbox{and}\qquad
  \sum_{j\ge 1} \left(\frac{\bar{b}_j}{\alpha_j}\right)^{2\lambda/(1-\lambda)} \,<\, \infty\;.
\end{equation}
We now choose
\begin{equation*} %\label{eq:def_alpha}
  \alpha_j \,:=\, \frac{\bar{b}_j^q}{\varpi}
  \qquad\mbox{for some parameter}\quad \varpi > \sum_{j\ge1} \bar{b}_j^q
\;.
\end{equation*}
Then the first sum in \eqref{eq:cond_rj} is less than $1$ due to the
assumption \eqref{eq:assump-1}. Noting that \eqref{eq:assump-1}
implies that $\sum_{j\ge1} \bar{b}_j^{q'}<\infty$ for all $q'\ge q$,
we conclude that the second sum in \eqref{eq:cond_rj} converges for
\[
  \frac{2\lambda}{1-\lambda} (1-q) \,\ge\, q
  \qquad\iff\qquad
  q \,\le\, \frac{2\lambda}{1+\lambda}
  \qquad\iff\qquad
  \lambda \,\ge\,\frac{q}{2-q}\;.
\]
Since $\lambda$ must be strictly between $1/2$ and $1$, when $q\in(0,2/3]$
we choose $\lambda_q = 1/(2-2\delta)$ for some $\delta\in (0,1/2)$, and
when $q \in(2/3,1)$ we set $\lambda_q = q/(2-q)$.

For the case $q=1$ we take $\lambda_q = 1$, and we use $\rho(1)=1/6$. Then
using \cite[Lemma~6.3]{KSS1} and the assumption \eqref{eq:small-1} we
obtain
\[
  A_1
  \,=\, \sum_{|\setu|<\infty} (|\setu|+3)! \prod_{j\in\setu} \left(\frac{\bar{b}_j}{\sqrt{6}}\right)
  \,\le\, 6\left(\frac{1}{1-\sum_{j\ge 1} (\bar{b}_j/\sqrt{6})}\right)^{4}
  \,<\, \infty\;.
\]
This completes the proof. \quad\qed
\end{proof}

In the following theorem we verify that with a slightly modified choice of
weights the condition \eqref{eq:nasty}, which is required in
Theorem~\ref{thm:error-simp}, is indeed satisfied. The assumptions in the
theorem are consistent with Assumptions (\textbf{A3}), (\textbf{A5}), and
(\textbf{A7}), however, the requirement that $p$ be strictly smaller than
$q$ is new, and is essential for obtaining the decay we need.
%For simplicity of presentation, we observe that the two choices of weights
%\eqref{eq:choiceweight-1} and \eqref{eq:choiceweight-2} can be expressed
%in the common form \eqref{eq:weights-form} below, where $\beta =
%1/\sqrt{\rho(\lambda)}$ and $a = 2 - 2/(1+\lambda)\is{\in [q,1]}$ for the
%choice \eqref{eq:choiceweight-1}, and $\beta = 2$ and $a = q$ for the
%choice \eqref{eq:choiceweight-2}.
%
\begin{theorem} \label{thm:trunc-decay}
With $b_j$ and $\bar{b}_j$ defined as in \eqref{eq:defbj} and
\eqref{eq:defbarbj} for fixed $\kappa\in (0,1]$, suppose that the sequence
$\{b_j\}$ is non-increasing and
\[
  \sum_{j\ge 1} b_j^p \,<\, \infty
  \qquad\mbox{and}\qquad
  \sum_{j\ge 1} \bar{b}_j^q \,<\, \infty
  \qquad\mbox{for some}\quad 0 < p < q \le 1\;.
\]
Define a new sequence $\{\beta_j\}$ by
\begin{equation} \label{eq:beta}
  \beta_j \,:=\, \max(\bar{b}_j, b_j^{p/q})\;.
\end{equation}
Then, Theorems~\ref{thm:error-simp} and~\ref{thm:weights} hold \emph{a
fortiori} if $\bar{b}_j$ is replaced by $\beta_j$. Moreover, the choice of
weights \eqref{eq:choiceweight-1} with $\beta_j$ instead of $\bar{b}_j$
satisfies the condition \eqref{eq:nasty} with $n=3$ and
\begin{equation} \label{eq:exponent}
  \alpha \,=\, \frac{1}{p} - \frac{1}{q}\;.
\end{equation}
However, the constant $C$ in Theorem \ref{thm:error-simp} has now a
dependence on $\lambda$.
\end{theorem}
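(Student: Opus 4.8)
The plan is to derive the two \emph{a fortiori} claims from plain monotonicity, and then to verify the technical condition \eqref{eq:nasty} by reindexing the sums over $\setu$.

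\textbf{The \emph{a fortiori} claims.} First I would observe that, since $p<q$ and $b_j\to0$ (so that $b_j\le1$, hence $b_j^{p/q}\ge b_j$, for all but finitely many $j$), one has $\beta_j^q=\max(\bar b_j^q,b_j^p)$ and therefore $\sum_j\beta_j^q\le\sum_j\bar b_j^q+\sum_j b_j^p<\infty$ (with \eqref{eq:small-1}, when $q=1$, read for $\{\beta_j\}$). Thus the hypotheses of Theorem~\ref{thm:weights} hold for the sequence $\{\beta_j\}$, so the weights \eqref{eq:choiceweight-1} built from $\beta_j$ keep $D_{\bsgamma^*}(\lambda_q)$ --- now computed with $\beta_j$ --- finite. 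Since $\beta_j\ge\bar b_j\ge b_j$, the quantity $D_\bsgamma(\lambda)$ in \eqref{eq:Dgamma} and every estimate feeding Theorem~\ref{thm:error-simp} are monotone in the $\bar b_j$, so all of them stay valid, merely enlarged, when $\bar b_j$ is replaced everywhere by $\beta_j$, and Theorem~\ref{thm:error-simp}'s bound holds for any admissible weights; this gives the first assertion with essentially no computation.

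\textbf{Verifying \eqref{eq:nasty}.} Here I would fix $\lambda\in(1/2,1]$, put $\mu:=2/(1+\lambda)\in[1,4/3)$, and use the weights $\gamma_\setu^*$ of \eqref{eq:choiceweight-1} built from $\beta_j$. On substituting the weights, the summands on the two sides of \eqref{eq:nasty} with $n=3$ become $(|\setu|!)^2$ (left) and $((|\setu|+3)!)^2$ (right) times the common factor $\big(6/(|\setu|+3)!\big)^{\mu}\rho(\lambda)^{\mu|\setu|/2}\prod_{j\in\setu}(b_j^2/\beta_j^{\mu})$. Given $\setu$ that meets the block $\{s_{\ell-1}+1:s_\ell\}$, let $k_0$ be its smallest index in that block and $\setu^\circ:=\setu\setminus\{k_0\}$. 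The key step is to split $b_{k_0}^2=b_{k_0}^{2-2p/q}\cdot b_{k_0}^{2p/q}$: for the first factor the elementary Stechkin bound $b_{k_0}\le\big(\sum_j b_j^p\big)^{1/p}s_{\ell-1}^{-1/p}$ (available because $\{b_j\}$ is non-increasing), together with the identity $(2-2p/q)/p=2\alpha$ for $\alpha=1/p-1/q$, gives $b_{k_0}^{2-2p/q}\le C\,s_{\ell-1}^{-2\alpha}$; for the second, $b_{k_0}^{2p/q}\le\beta_{k_0}^2$ because $\beta_{k_0}\ge b_{k_0}^{p/q}$. Hence $b_{k_0}^2/\beta_{k_0}^{\mu}\le C\,s_{\ell-1}^{-2\alpha}\,\beta_{k_0}^{2-\mu}$, and comparing the left-hand summand at $\setu$ with the right-hand summand at $\setu^\circ$ --- the residual factorial and $\rho(\lambda)$ quotients being at most $\rho(\lambda)^{\mu/2}$ --- I obtain
\[
  \frac{(|\setu|!)^2\prod_{j\in\setu}b_j^2}{\gamma_\setu^*}\;\le\;C\,\rho(\lambda)^{\mu/2}\,s_{\ell-1}^{-2\alpha}\,\beta_{k_0}^{2-\mu}\;\frac{((|\setu^\circ|+3)!)^2\prod_{j\in\setu^\circ}b_j^2}{\gamma_{\setu^\circ}^*}\;.
\]
Summing over all such $\setu$: the assignment $\setu\mapsto(\setu^\circ,k_0)$ is injective, so on enlarging the index range to all pairs $(\setu^\circ\subseteq\{1:s_\ell\},\;k_0\in\{s_{\ell-1}+1:s_\ell\})$ the $k_0$-sum factors out as $\sum_{k\ge1}\beta_k^{2-\mu}$ and the $\setu^\circ$-sum becomes the full right-hand side of \eqref{eq:nasty}. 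For $\lambda=\lambda_q$ a direct check of the three cases of \eqref{eq:def_lam} shows $2-\mu=2\lambda_q/(1+\lambda_q)\ge q$, hence $\sum_k\beta_k^{2-\mu}<\infty$ (only finitely many $\beta_k$ exceed $1$); this establishes \eqref{eq:nasty} with $n=3$ and $\alpha=1/p-1/q$. The resulting constant carries the factor $\rho(\lambda)^{\mu/2}\sum_k\beta_k^{2-\mu}$, which is why the constant in Theorem~\ref{thm:error-simp} now depends on $\lambda$.

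\textbf{Where the difficulty lies.} The delicate point will be the calibration just described: one must peel off from $b_{k_0}^2$ precisely the exponent $2-2p/q$ --- enough for the Stechkin bound to deliver the full decay rate $s_{\ell-1}^{-2\alpha}$, but small enough that the leftover factor $b_{k_0}^{2p/q}/\beta_{k_0}^{\mu}\le\beta_{k_0}^{2-\mu}$ remains summable in $k_0$; that summability is exactly what controls the otherwise unbounded multiplicity created when the reindexing $\setu\mapsto(\setu^\circ,k_0)$ is relaxed to range over a full product set. Once this balance is right, the factorial and $\rho(\lambda)$ bookkeeping is routine, and the monotonicity arguments of the first part are immediate.
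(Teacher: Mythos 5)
Your proposal is correct and follows essentially the same route as the paper: the same reindexing of sets meeting the block $\{s_{\ell-1}+1:s_\ell\}$ via an extracted index $k$, the same use of $\beta_k\ge b_k^{p/q}$ and of the factorial monotonicity $(|\setv|+1)!\le(|\setv|+3)!$, and the same reliance on $\lambda\ge q/(2-q)$ to secure summability, yielding $\alpha=1/p-1/q$. The only (immaterial) difference is bookkeeping: the paper applies the Stechkin tail estimate to $\sum_{k\ge s_{\ell-1}+1}b_k^{2-2(p/q)/(1+\lambda)}$ as a whole, whereas you split $b_k^2=b_k^{2-2p/q}\cdot b_k^{2p/q}$, extract the decay $s_{\ell-1}^{-2\alpha}$ pointwise from the first factor, and sum $\beta_k^{2-\mu}$ from the second.
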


%\vspace{-0.9cm} % --- WARNING -- WARNING -- WARNING -- WARNING -- WARNING -- WARNING -- WARNING -- WARNING -- WARNING -- WARNING --
\begin{proof}
Note that $\sum_{j\ge 1} \beta_j^q < \infty$. Substituting
\eqref{eq:choiceweight-1}, with $\bar{b}_j$ replaced by $\beta_j$, into
the left-hand side of \eqref{eq:nasty}, we obtain
\begin{align*}
  &\sum_{\satop{\setu\subseteq\{1:s_\ell\}}{\setu\cap\{s_{\ell-1}+1:s_\ell\}\ne\emptyset}}
  \!\!\!\!\!\!
  \frac{(|\setu|!)^2\prod_{j\in\setu} b_j^2}{\gamma_\setu}
  \,=\,
  \sum_{\satop{\setu\subseteq\{1:s_\ell\}}{\setu\cap\{s_{\ell-1}+1:s_\ell\}\ne\emptyset}}
  \frac{(|\setu|!)^2\prod_{j\in\setu} b_j^2}
       {[\frac{1}{6}(|\setu|+3)! \prod_{j\in\setu} (\beta_j/\!\!\sqrt{\rho(\lambda)})]^{2/(1+\lambda)}} \\
  &
  \,\le\, \sum_{k=s_{\ell-1}+1}^{s_\ell} \sum_{k\in\setu\subseteq\{1:s_\ell\}}
  \frac{(|\setu|!)^2\prod_{j\in\setu} b_j^2}
       {[\frac{1}{6}(|\setu|+3)! \prod_{j\in\setu} (\beta_j/\!\!\sqrt{\rho(\lambda)})]^{2/(1+\lambda)}} \\
  &\,=\, \sum_{k=s_{\ell-1}+1}^{s_\ell} \sum_{\setv\subseteq\{1:s_\ell\}\setminus\{k\}}
  \frac{b_k^2}{[\beta_k/\!\!\sqrt{\rho(\lambda)}]^{2/(1+\lambda)}}
  \frac{[(|\setv|+1)!]^2\prod_{j\in\setv} b_j^2}
       {[\frac{1}{6}(|\setv|+1+3)! \prod_{j\in\setv} (\beta_j/\!\!\sqrt{\rho(\lambda)})]^{2/(1+\lambda)}} \\
  &\,\le\, [\rho(\lambda)]^{1/(1+\lambda)}
  \sum_{k=s_{\ell-1}+1}^{s_\ell} b_k^{2-2(p/q)/(1+\lambda)}
  \sum_{\setv\subseteq\{1:s_\ell\}}
  \frac{[(|\setv|+3)!]^2\prod_{j\in\setv} b_j^2}{\gamma_\setv}\;,
\end{align*}
where in the last step we allowed $\setv$ to also include the index $k$,
and used $\beta_k\ge b_k^{p/q}$ and $(|\setv|+1+3)! \ge (|\setv|+3)!$ in
the denominator, and $(|\setv|+1)! \le (|\setv|+3)!$ in the numerator.

To complete the proof, we estimate the tail sum $\sum_{k\ge s_{\ell-1}+1}
b_k^{2-2(p/q)/(1+\lambda)}$ using \eqref{eq:stechkin}, but with $b_j$
replaced by $b_j^{2-2(p/q)/(1+\lambda)}$ and $p$ replaced by
$p/[2-2(p/q)/(1+\lambda)]$. This is valid because
\[
   \frac{2-2(p/q)/(1+\lambda)}{p}
   \,\ge\, \frac{2}{p} - \frac{2}{q (1 + q/(2-q))}
   \,=\, \frac{2}{p}- \frac{2}{q}\,+\,1\,>\,1\;,
\]
where we used $\lambda\ge q/(2-q)$. The exponent of $s_{\ell-1}$ in
\eqref{eq:nasty} becomes $-(2/p-2/q)$, proving that \eqref{eq:nasty} holds
with $\alpha = 1/p-1/q$, but with a constant in front that now depends on
$\lambda$. \quad\qed
\end{proof}

%\vspace{-0.3cm} % --- WARNING -- WARNING -- WARNING -- WARNING -- WARNING -- WARNING -- WARNING -- WARNING -- WARNING -- WARNING --
\subsection{Summary of Overall Cost Versus Error}
\label{ssec:SumCostErr}

Recall that
\begin{equation} \label{eq:def_h}
  h_\ell \,\asymp\, 2^{-\ell}
  \quad\mbox{and}\quad
  M_{h_\ell} \,\asymp\, h_\ell^{-d} \,\asymp\, 2^{\ell d}
  \quad\mbox{for}\quad \ell=0,\ldots,L\;.
\end{equation}
Based on the mean square error bound \eqref{eq:error-simp}, we now specify
$s_\ell$ and $N_\ell$ for each level. We consider two scenarios depending
on whether or not $k$-orthogonality \eqref{eq:orthprop} holds.

For our cost model we assume the availability of a linear complexity FE
solver. We assume that in general the cost for assembling the stiffness
matrix at level $\ell$ is $\calO(s_\ell\,M_{h_\ell})$, and is
$\calO(M_{h_\ell}\,\log(M_{h_\ell}))$ if $k$-orthogonality
\eqref{eq:orthprop} holds (see the second part of
Theorem~\ref{thm:orthprop}). Moreover, we assume that the functions
$\psi_j$ are explicitly known, and that integration of any basis functions
in the FE method against any $\psi_j$ is available at unit cost. Thus
\[
  {\rm cost} \,=\, \calO\left(\sum_{\ell=0}^L N_\ell\, K_\ell\right)\;,
  \quad
  K_\ell \,:=\,
  \begin{cases}
  h_\ell^{-d}\,\log(h_\ell^{-d})
  & \mbox{if $k$-orthogonality \eqref{eq:orthprop} holds}\;, \\
  h_\ell^{-d}\,s_\ell & \mbox{otherwise}\;.
  \end{cases}
\]
Clearly, changing the cost model may change the definition of $K_\ell$.
(Some cost models in the literature do not include $s_\ell$ as part of
$K_\ell$.) Note that our cost model does not include the pre-computation
cost for the CBC construction of randomly shifted lattice rules, which
requires $\calO(s_\ell\,N_\ell\,\log N_\ell + s_\ell^2\,N_\ell)$
operations on level $\ell$.

\smallskip\noindent\emph{Scenario 1.} In the special case where
$k$-orthogonality \eqref{eq:orthprop} holds, the values of $s_\ell$ are
given by \eqref{eq:def_s_k}, and we have $\theta_\ell = 0$ for all $\ell$
in the error bound \eqref{eq:error-simp}, giving the mean square error
bound (denoted in this subsection by error$^2$ for simplicity)
\begin{equation} \label{eq:error-short}
  {\rm error}^2 \,=\,
  \calO \left(
  h_L^{2\tau}
  + \sum_{\ell=0}^L [\varphi(N_\ell)]^{-1/\lambda} h_{\ell-1}^{2\tau} \right)\;.
\end{equation}
\smallskip\noindent\emph{Scenario 2.} When $k$-orthogonality
\eqref{eq:orthprop} does not hold and $p<q\le 1$, we have $\theta_L = 1$
in the error bound \eqref{eq:error-simp}. We assume that the weights
$\gamma_\setu$ are chosen as in Theorem \ref{thm:trunc-decay}, so that
\eqref{eq:exponent} holds. To balance the error contribution within the
highest discretization level, we impose the condition $s_L^{-2(1/p-1)} =
\calO( h_L^{\tau} )$, which is equivalent to $s_L = \Omega( 2^{L\tau
p/(2-2p)})$. Then, to minimize the error within each level, one choice for
$s_\ell$ is to set $s_\ell = s_L$ for all $\ell<L$, leading to
$\theta_{\ell-1} = 0$ for all $\ell=1,\ldots,L$ in \eqref{eq:error-simp}.
Alternatively, since $s_\ell$ should be as small as possible from the
point of view of reducing the cost at each level, we can impose the
condition $s_{\ell-1}^{-(1/p-1/q)} = \calO(h_{\ell-1}^{\tau} )$ for
$\ell=1,\ldots,L$ (see \eqref{eq:error-simp} with $\alpha=1/p-1/q$), which
is equivalent to $s_\ell = \Omega( 2^{\ell\tau pq/(q-p)} )$ for
$\ell=0,\ldots,L-1$. Combining both approaches, while taking into account
the monotonicity condition \eqref{eq:sellincr}, we choose
\begin{align} \label{eq:def_s}
  s_\ell \,:=\,
  \min \Big( \big\lceil 2^{\ell\tau pq/(q-p)}\big\rceil ,
  \big\lceil 2^{L\tau p/(2-2p)}\big\rceil \Big)
  \quad\mbox{for}\quad \ell=0,\ldots,L\;.
\end{align}
Thus we have $s_\ell$ strictly increasing for $\ell=0,\ldots, \lfloor
L(q-p)/(q(2-2p))\rfloor$,
and the remaining $s_\ell$ are all identical. %Our choice of $s_\ell$ \fk{in
%\eqref{eq:def_s}}
This leads again to the error bound \eqref{eq:error-short}.

\smallskip\noindent\emph{Scenario 3.} When $k$-orthogonality
\eqref{eq:orthprop} does not hold and $p=q< 1$, we choose
\begin{equation} \label{eq:s3}
  s_\ell \,:=\, \big\lceil 2^{L\tau p/(2-2p)}\big\rceil
  \quad\mbox{for}\quad \ell=0,\ldots,L\;.
\end{equation}
This again yields the error bound \eqref{eq:error-short}.

\smallskip
We remark that for all $N\in \mathbb{N}$, the Euler totient function
$\varphi(N)$ takes values close to $N$. Specifically, if $N$ is prime then
$1/\varphi(N) = 1/(N-1)\le 2/N$. If $N$ is a power of $2$ then
$1/\varphi(N) = 2/N$. It is known from \cite[Theorem 8.8.7]{BS66} that
$1/\varphi(N) < (e^\Upsilon\log\log N + 3/\log\log N)/N$ for all $N\ge 3$,
where $e^\Upsilon = 1.781\ldots$. Thus it can be verified that for all
computationally realistic values of $N$, say, $N\le 10^{30}$, we have
$1/\varphi(N) < 9/N$.
%FK -- 8/N for 10^{19}
% 7/N for 10^{11}
% 6/N for 10^7
Treating this factor $9$ as a constant and using $h_{\ell-1} \asymp
h_{\ell}$, we obtain for all three scenarios the simpler mean square error
expression
\[
  {\rm error}^2 \,=\,
  \calO \left(
  h_L^{2\tau}
  + \sum_{\ell=0}^L N_\ell^{-1/\lambda} h_{\ell}^{2\tau} \right)\;.
\]

To \emph{minimize the mean square error for a fixed cost}, we consider the
Lagrange multiplier function
%\vspace{-0.3cm} % WARNING -- WARNING -- WARNING -- WARNING -- WARNING -- WARNING --
\[
  g(\mu) \,:=\, \underbrace{h_L^{2\tau}
  + \sum_{\ell=0}^L N_\ell^{-1/\lambda} h_{\ell}^{2\tau}}_{\mbox{\footnotesize{mean square error}}}
  \;+\; \mu\; \underbrace{\sum_{\ell=0}^L N_\ell\, K_\ell}_{\mbox{\footnotesize{cost}}} \;.
\]
We look for the stationary point of $g(\mu)$ with respect to $N_\ell$,
thus demanding that
\[
  \frac{\partial g(\mu)}{\partial N_\ell}
  \,=\,
  -\frac{1}{\lambda} N_\ell^{-1/\lambda-1} h_{\ell}^{2\tau} + \mu\,K_\ell \,=\, 0
  \qquad\mbox{for}\quad \ell=0,\ldots,L\;.
 \]
This prompts us to define
\begin{equation} \label{eq:def_N}
  N_\ell
  \,:=\,
  \Big\lceil N_0
  \left(h_0^{-2\tau}\,K_0\, h_\ell^{2\tau}\,K_\ell^{-1} \right)^{\lambda/(\lambda+1)}
  \Big\rceil
  \qquad\mbox{for}\quad \ell=1,\ldots,L\;.
\end{equation}
Leaving $N_0$ to be specified later and treating $h_0$ and $K_0$ as
constants, we conclude that
\begin{align} \label{eq:error_cost}
  {\rm error}^2 \,=\, \calO \left(
  h_L^{2\tau}
  \;+\; N_0^{-1/\lambda}
  \sum_{\ell=0}^L E_\ell \right)
  \quad\mbox{and}\quad
  {\rm cost} \,=\, \calO \left(
  N_0\,
  \sum_{\ell=0}^L E_\ell \right)\;,
\end{align}
where
\begin{align*}
  E_\ell
  := (h_\ell^{2\lambda\tau}\,K_\ell)^{1/(\lambda+1)}
  =\!
  \begin{cases}
  (h_\ell^{2\lambda\tau-d}\,\log(h_\ell^{-d}))^{1/(\lambda+1)}
  & \mbox{if $k$-orthogonality \eqref{eq:orthprop} holds}\,, \\
  (h_\ell^{2\lambda\tau-d}\,s_\ell)^{1/(\lambda+1)} & \mbox{otherwise}\;.
  \end{cases}
\end{align*}
We see that the mean square error is \emph{not} necessarily minimized by
balancing the error terms between the levels. For example, when
$k$-orthogonality \eqref{eq:orthprop} holds, we observe that
\begin{itemize}
\item For $d < 2\lambda\tau$, the quantity $E_\ell$ (and thus the mean
    square error and cost at level~$\ell$) decreases with
    increasing~$\ell$.
\item For $d > 2\lambda\tau$, the quantity $E_\ell$ increases with
    increasing $\ell$.
\end{itemize}

In the light of the error bound in \eqref{eq:error_cost}, we always choose
$N_0$ to satisfy
\begin{equation} \label{eq:cond_N0}
  N_0^{-1/\lambda}\, \sum_{\ell=0}^L E_\ell \,=\, \calO( h_L^{2\tau} )
  \quad\iff\quad
  N_0 \,=\, \Omega \bigg( h_L^{-2\tau\lambda} \bigg(\sum_{\ell=0}^L E_\ell\bigg)^\lambda\bigg)\;,
\end{equation}
%which is equivalent to $N_0 = \Omega ( h_L^{-2\tau\lambda}
%(\sum_{\ell=0}^L E_\ell)^\lambda)$,
leading to the simplified error bound ${\rm error}^2 = \calO \big(
h_L^{2\tau} \big)$.

\smallskip\noindent\emph{Scenario 1 (continued).}
Substituting $h_\ell\asymp 2^{-\ell}$, we obtain for the case where
$k$-orthogonality holds that
\begin{align*}
  \sum_{\ell=0}^L E_\ell
  &\,=\,
  \calO\left(
  \sum_{\ell=0}^L 2^{-\ell(2\lambda\tau-d)/(\lambda+1)} (\ell+1)^{1/(\lambda+1)}
  \right) \\
  &\,=\,
  \begin{cases}
  \calO\big( 1 \big)
  & \mbox{if } d < 2\lambda\tau\,,
  \\
  \calO\big( L^{(\lambda+2)/(\lambda+1)}\big)
  & \mbox{if } d = 2\lambda\tau\,,
  \\
  \calO\big(
  2^{-L(2\lambda\tau-d)/(\lambda+1)} L^{1/(\lambda+1)}
  \big)
  & \mbox{if } d > 2\lambda\tau \,.
  \end{cases}
\end{align*}
The choice \eqref{eq:cond_N0} for $N_0$ then yields
\begin{align}
\label{eq:def_N0_k}
  N_0
  &\,:=\,
  \begin{cases}
  \big\lceil 2^{L\tau (2\lambda)} \big\rceil
  & \mbox{if } d < 2\tau\lambda\,,
  \\
  \big\lceil 2^{L\tau (2\lambda)} L^{\lambda(\lambda+2)/(\lambda+1)} \big\rceil
  & \mbox{if } d = 2\tau\lambda\,,
  \\
  \big\lceil 2^{L\tau (d/\tau+2)\lambda/(\lambda+1)} L^{\lambda/(\lambda+1)} \big\rceil
  & \mbox{if } d > 2\tau\lambda \,.
  \end{cases}
\end{align}
Upon substituting \eqref{eq:cond_N0} into the cost bound in
\eqref{eq:error_cost} and using \eqref{eq:def_N0_k}, we obtain
\begin{align*}
  {\rm cost}
  \,=\, \calO \big(N_0^{(\lambda+1)/\lambda} h_L^{2\tau}\big)
  \,=\,
  \begin{cases}
  \calO\big( 2^{L\tau (2\lambda)} \big)
  & \mbox{if } d < 2\lambda\tau\,,
  \\
  \calO\big( 2^{L\tau (2\lambda)}L^{\lambda+2} \big)
  & \mbox{if } d = 2\lambda\tau\,,
  \\
  \calO\big(
  2^{L\tau(d/\tau)} L
  \big)
  & \mbox{if } d > 2\lambda\tau \,.
  \end{cases}
\end{align*}

\smallskip\noindent\emph{Scenario 2 (continued).}
When $k$-orthogonality does not hold and $p<q\le 1$, we use the definition
\eqref{eq:def_s} for $s_\ell$. We consider separately the two alternative
choices in \eqref{eq:def_s}: choice A takes $s_\ell = \lceil
2^{\ell\tau\eta} \rceil$ for all $\ell$, while choice B takes $s_\ell =
\lceil 2^{L\tau\xi} \rceil$ for all $\ell$, where for ease of notation we
have introduced
\begin{equation} \label{eq:etakappa}
  \eta \,:=\, \frac{pq}{q-p} \qquad\mbox{and}\qquad \xi \,:=\, \frac{p}{2-2p}\;,
\end {equation}
noting that $\eta \ge \xi$. Then we have $\sum_{\ell=0}^L E_\ell \le
\min (\sum_{\ell=0}^L E_\ell^{(A)},\sum_{\ell=0}^L E_\ell^{(B)})$,
%
%\begin{align*}
% \sum_{\ell=0}^L E_\ell &\,\le\, \min \left(\sum_{\ell=0}^L E_\ell^{(A)},\sum_{\ell=0}^L E_\ell^{(B)}\right)\;,
%\end{align*}
where
\begin{align} \label{eq:EA}
  \sum_{\ell=0}^L E_\ell^{(A)}
  &\,=\, \calO \Bigg( \sum_{\ell=0}^L 2^{\ell\tau(d/\tau-2\lambda + \eta)/(\lambda+1)} \Bigg) \nonumber\\
  &\,=\,
  \begin{cases}
  \calO \big(1 \big)
    & \mbox{if } d/\tau < 2\lambda-\eta\;, \\
  \calO \big(L\big)
    & \mbox{if } d/\tau = 2\lambda-\eta\;, \\
  \calO \big(2^{L\tau (d/\tau-2\lambda + \eta)/(\lambda+1)} \big)
    & \mbox{if } d/\tau > 2\lambda-\eta\;,
  \end{cases}
\end{align}
\begin{align}
  %\\
  \label{eq:EB}
  \sum_{\ell=0}^L E_\ell^{(B)}
  &\,=\, \calO \Bigg( 2^{L\tau\xi/(\lambda+1)} \sum_{\ell=0}^L 2^{\ell\tau (d/\tau-2\lambda)/(\lambda+1)} \Bigg) \nonumber\\
  &\,=\,
  \begin{cases}
  \calO \big(2^{L\tau\xi/(\lambda+1)} \big)
    & \mbox{if } d/\tau < 2\lambda \;, \\
  \calO \big(2^{L\tau\xi/(\lambda+1)} L \big)
    & \mbox{if } d/\tau = 2\lambda\;, \\
  \calO \big(2^{L\tau (d/\tau-2\lambda + \xi)/(\lambda+1)} \big)
    & \mbox{if } d/\tau > 2\lambda\;.
  \end{cases}
\end{align}
For the ``middle case'' $2\lambda - \eta < d/\tau < 2\lambda$, it is
beneficial to estimate directly
\begin{align*}
  \sum_{\ell=0}^L E_\ell
  &\,=\, \calO \Bigg( \sum_{\ell=0}^{\lfloor L\xi/\eta\rfloor} 2^{\ell\tau(d/\tau - 2\lambda + \eta)/(\lambda+1)}
  + 2^{L\tau\xi/(\lambda+1)} \sum_{\ell=\lfloor L\xi/\eta\rfloor+1}^L
  2^{\ell\tau(d/\tau - 2\lambda)/(\lambda+1)} \Bigg) \\
% &\,=\,
% \calO\big(
% 2^{L\tau(\xi/\eta)(d/\tau - 2\lambda + \eta)/(\lambda+1)}
% + 2^{L\tau\xi/(\lambda+1) + L\tau(\xi/\eta)(d/\tau - 2\lambda)/(\lambda+1)}
% \big) \\
  &\,=\,
  \calO\big(
  2^{L\tau(\xi/\eta)(d/\tau - 2\lambda + \eta)/(\lambda+1)} \big) \;.
\end{align*}
Comparing this with \eqref{eq:EA} and \eqref{eq:EB}, and taking the
appropriate minimum, we obtain
\begin{align*}
  \sum_{\ell=0}^L E_\ell
  &\,=\,
  \begin{cases}
  \calO \big(1 \big)
    & \mbox{if } d/\tau < 2\lambda-\eta\;, \\
  \calO \big(L\big)
    & \mbox{if } d/\tau = 2\lambda-\eta\;, \\
  \calO \big(2^{L\tau (\xi/\eta)(d/\tau-2\lambda + \eta)/(\lambda+1)} \big)
    & \mbox{if } 2\lambda-\eta < d/\tau < 2\lambda \;, \\
  \calO \big(2^{L\tau\xi/(\lambda+1)} L \big)
    & \mbox{if } d/\tau = 2\lambda\;, \\
  \calO \big(2^{L\tau (d/\tau-2\lambda + \xi)/(\lambda+1)} \big)
    & \mbox{if } d/\tau > 2\lambda\;.
  \end{cases}
\end{align*}
The choice \eqref{eq:cond_N0} for $N_0$ yields
\begin{align} \label{eq:def_N0}
  N_0
  &\,:=\,
  \begin{cases}
  \lceil 2^{L\tau(2\lambda)} \rceil
    & \mbox{if } d/\tau < 2\lambda - \eta\;, \\
  \lceil 2^{L\tau(2\lambda)}\,L^\lambda \big)
    & \mbox{if } d/\tau = 2\lambda - \eta\;, \\
  \lceil 2^{L\tau [2(\lambda+1) + (\xi/\eta)(d/\tau - 2\lambda+\eta)]\lambda/(\lambda+1)} \rceil
    & \mbox{if } 2\lambda - \eta < d/\tau < 2\lambda\;, \\
  \lceil 2^{L\tau [2(\lambda+1) +\xi]\lambda/(\lambda+1)} L^\lambda \rceil
    & \mbox{if } d/\tau = 2\lambda\;, \\
  \lceil 2^{L\tau [2 + d/\tau + \xi]\lambda/(\lambda+1)} \rceil
    & \mbox{if } d/\tau > 2\lambda\;.
  \end{cases}
\end{align}
Then we have ${\rm error}^2 = \calO ( h_L^{2\tau})$ as before, but now
\begin{align*}
  {\rm cost} &\,=\, \calO \big(
  N_0^{(\lambda+1)/\lambda} h_L^{2\tau}\big)
  \,=\,
  \begin{cases}
  \calO \big(2^{L\tau(2\lambda)} \big)
    & \mbox{if } d/\tau < 2\lambda - \eta\;, \\
  \calO \big(2^{L\tau(2\lambda)}\,L^{\lambda+1} \big)
    & \mbox{if } d/\tau = 2\lambda - \eta\;, \\
  \calO \big(2^{L\tau [2\lambda + (\xi/\eta)(d/\tau - 2\lambda+\eta)]} \big)
    & \mbox{if } 2\lambda -\eta < d/\tau < 2\lambda \;, \\
  \calO \big(2^{L\tau (2\lambda + \xi)} L^{\lambda+1} \big)
    & \mbox{if } d/\tau = 2\lambda\;, \\
  \calO \big(2^{L\tau (d/\tau +\xi)} \big)
    & \mbox{if } d/\tau > 2\lambda\;.
  \end{cases}
\end{align*}

\smallskip\noindent\emph{Scenario 3 (continued). When
$k$-orthogonality \eqref{eq:orthprop} does not hold and $p=q< 1$, we
proceed in a similar way, taking $s_\ell=\lceil2^{2L\tau\xi\rceil}$ with
$\xi$ given by \eqref{eq:etakappa}, to obtain
\begin{align} \label{eq:def_N03}
  N_0
  &\,:=\,
  \begin{cases}
  \lceil 2^{L\tau [2(\lambda+1) + \xi]\lambda/(\lambda+1)} \rceil
    & \mbox{if } d/\tau < 2\lambda\;, \\
  \lceil 2^{L\tau [2(\lambda+1) +\xi]\lambda/(\lambda+1)} L^\lambda \rceil
    & \mbox{if } d/\tau = 2\lambda\;, \\
  \lceil 2^{L\tau [2 + d/\tau + \xi]\lambda/(\lambda+1)} \rceil
    & \mbox{if } d/\tau > 2\lambda\;,
  \end{cases}
\end{align}
and
\begin{align*}
  {\rm cost} \,=\,
  \begin{cases}
  \calO \big(2^{L\tau (2\lambda + \xi)} \big)
    & \mbox{if } d/\tau < 2\lambda \;, \\
  \calO \big(2^{L\tau (2\lambda + \xi)} L^{\lambda+1} \big)
    & \mbox{if } d/\tau = 2\lambda\;, \\
  \calO \big(2^{L\tau (d/\tau +\xi)} \big)
    & \mbox{if } d/\tau > 2\lambda\;.
  \end{cases}
\end{align*}}

\smallskip
In all three scenarios, for given $\varepsilon > 0$, we choose $L$ such
that
\begin{equation} \label{eq:def_L}
  h_L^\tau \,\asymp\, 2^{-L\tau} \,\asymp\, \varepsilon\;.
\end{equation}
We can then express the total cost of the algorithm in terms of
$\varepsilon$. This is summarized in Theorem~\ref{thm:summary} below.
\begin{theorem} \label{thm:summary}
Under Assumptions \textnormal{(\textbf{A1})}--\textnormal{(\textbf{A7})},
leaving out \textnormal{(\textbf{A5})} if $k$-orthogonality
\eqref{eq:orthprop} holds, for $f\in H^{-1+t}(D)$ and $G\in H^{-1+t'}(D)$
with $0 \le t,t'\le 1$ and $\tau:=t+t'>0$, consider the multi-level QMC FE
algorithm defined by \eqref{eq:MLQMCFE}. Given $\varepsilon>0$, with $L$
given by \eqref{eq:def_L}, $h_\ell$ given by \eqref{eq:def_h}, $s_\ell$
given by \eqref{eq:def_s_k}, \eqref{eq:def_s} or \eqref{eq:s3} as
appropriate, $N_\ell$ given by \eqref{eq:def_N}, $N_0$ given by
\eqref{eq:def_N0_k}, \eqref{eq:def_N0} or \eqref{eq:def_N03} as
appropriate, and with randomly shifted lattice rules constructed based on
POD weights $\gamma_\setu$ given by \eqref{eq:choiceweight-1}, in which
$\bar{b}_j$ is replaced by $\beta_j$ from \eqref{eq:beta},
%or \eqref{eq:choiceweight-2},
we obtain
\[
  \sqrt{\bbE[|I(G(u)) - Q_*^L(\cdot;G(u))|^2]}
  \,=\, \calO \left(\varepsilon \right)
  \;,
\]
and
\[
  {\rm cost}(Q_*^L)
  \,=\, \calO \big( \varepsilon^{-a^{\rm ML}}\, (\log\varepsilon^{-1})^{b^{\rm ML}} \big)\;,
\]
with
\begin{align*}
  a^{\rm ML}
  &\,=\,
  \begin{cases}
  \displaystyle
  \max \bigg( 2\lambda_q, \frac{d}{\tau} \bigg)
  \qquad\qquad\qquad\qquad\qquad\qquad
  \mbox{if $k$-orthogonality \eqref{eq:orthprop} holds}\;, \\
  \displaystyle
  \max \bigg(2\lambda_q,\frac{d}{\tau}\bigg)+\frac{p}{2-2p}\bigg(1-\frac{q-p}{pq}\bigg(2\lambda_q-\frac{d}{\tau}\bigg)_+\bigg)_+
  \qquad\qquad \mbox{otherwise}\;.
  % 2\lambda_q
   % & \mbox{if } p<q\le 1 \mbox{ and }
   %   \frac{d}{\tau} \le 2\lambda-\frac{pq}{q-p} \;, \\
   %2\lambda_q + \frac{p}{2-2p} \Big(1 - \frac{q-p}{pq}\Big(2\lambda_q- \frac{d}{\tau}\Big)\Big)
   % & \mbox{if } p<q\le 1 \mbox{ and }
   %    2\lambda-\frac{pq}{q-p} < \frac{d}{\tau} < 2\lambda\;, \\
   %\frac{d}{\tau} + \frac{p}{2-2p}
   % & \mbox{if } p<q\le 1 \mbox{ and }
   %   \frac{d}{\tau} \ge 2\lambda-\frac{pq}{q-p} \;, \\
  %\max\Big(2\lambda_q,\frac{d}{\tau}\Big) + \frac{p}{2-2p}
  %  & \mbox{if } p=q < 1\,\;,
  \end{cases}
\end{align*}
where $\lambda_q$ is as defined in \eqref{eq:def_lam}. The value of
$b^{\rm ML}$ can be obtained from the cost bounds in Scenarios~1 and 2 in
a similar way.
\end{theorem}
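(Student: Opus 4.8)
The plan is to assemble ingredients that are already in place. First I would invoke Theorem~\ref{thm:weights}, in the sharpened form of Theorem~\ref{thm:trunc-decay} (i.e.\ with $\bar{b}_j$ replaced by $\beta_j$ from \eqref{eq:beta}), to pin down the POD weights \eqref{eq:choiceweight-1} and the value $\lambda=\lambda_q$ of \eqref{eq:def_lam}. This guarantees both $D_{\bsgamma^*}(\lambda_q)<\infty$ and, in Scenarios~2 and~3, the condition \eqref{eq:nasty} with $n=3$ and $\alpha=1/p-1/q$; in Scenario~1 (where $k$-orthogonality holds and $\theta_\ell\equiv 0$) that condition is not needed and Assumption~(\textbf{A5}) can be dropped. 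With these choices Theorem~\ref{thm:error-simp} is applicable and yields the mean-square error bound \eqref{eq:error-simp} with a constant depending only on the data and on $\lambda_q$.

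Next I would check that the prescriptions \eqref{eq:def_s_k}, \eqref{eq:def_s}, or \eqref{eq:s3} for $s_\ell$ render the dimension-truncation terms in \eqref{eq:error-simp} harmless: they are engineered so that $\theta_L\,s_L^{-2(1/p-1)}=\calO(h_L^\tau)$ and $\theta_{\ell-1}\,s_{\ell-1}^{-\min(1/p-1,\alpha)}=\calO(h_{\ell-1}^\tau)$ (and vanish identically when $k$-orthogonality holds). Using $\varphi(N_\ell)\asymp N_\ell$ and $h_{\ell-1}\asymp h_\ell$, the bound \eqref{eq:error-simp} then collapses to ${\rm error}^2=\calO(h_L^{2\tau}+\sum_{\ell=0}^L N_\ell^{-1/\lambda}h_\ell^{2\tau})$. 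Substituting the relative allocation \eqref{eq:def_N} of $N_\ell$ in terms of $N_0$ rewrites this, and the cost, as ${\rm error}^2=\calO(h_L^{2\tau}+N_0^{-1/\lambda}\sum_\ell E_\ell)$ and ${\rm cost}=\calO(N_0\sum_\ell E_\ell)$, with $E_\ell=(h_\ell^{2\lambda\tau}K_\ell)^{1/(\lambda+1)}$.

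The computational core is then to evaluate the geometric-type sum $\sum_{\ell=0}^L E_\ell$ scenario by scenario and regime by regime, splitting according to the sign of $d/\tau-2\lambda$ (and, in Scenario~2, also of $d/\tau-(2\lambda-\eta)$) as recorded in \eqref{eq:EA}--\eqref{eq:EB}; the most delicate case is the ``middle'' regime $2\lambda-\eta<d/\tau<2\lambda$ of Scenario~2, where the sum must be split at $\ell=\lfloor L\xi/\eta\rfloor$ and the two pieces balanced. Choosing $N_0$ by \eqref{eq:cond_N0} then forces the QMC contribution to be $\calO(h_L^{2\tau})$, so ${\rm error}^2=\calO(h_L^{2\tau})$ and, with $L$ as in \eqref{eq:def_L}, $\sqrt{\bbE[|I(G(u))-Q_*^L(\cdot;G(u))|^2]}=\calO(\varepsilon)$. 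Correspondingly ${\rm cost}=\calO(N_0^{(\lambda+1)/\lambda}h_L^{2\tau})$; inserting the explicit $N_0$ from \eqref{eq:def_N0_k}, \eqref{eq:def_N0}, or \eqref{eq:def_N03} and using $2^{-L\tau}\asymp\varepsilon$ converts this into $\calO(\varepsilon^{-a}(\log\varepsilon^{-1})^b)$.

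Finally I would verify that the regime-by-regime $\varepsilon$-exponents coalesce into the stated closed form. In Scenario~1 each regime gives exponent $\max(2\lambda,d/\tau)$. In Scenarios~2 and~3 a short manipulation (using $\xi/\eta=(q-p)/(q(2-2p))$ and $1/\eta=(q-p)/(pq)$, with $\xi,\eta$ as in \eqref{eq:etakappa}) shows the exponent equals $\max(2\lambda,d/\tau)+\frac{p}{2-2p}(1-\frac{q-p}{pq}(2\lambda-d/\tau)_+)_+$, the outer and inner $(\cdot)_+$ accounting respectively for the regimes $d/\tau\ge 2\lambda$ and $d/\tau\le 2\lambda-\eta$. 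Optimality forces $\lambda$ to be as small as possible subject to the constraint $\lambda\ge q/(2-q)$ under which both $D_{\bsgamma^*}(\lambda)<\infty$ and \eqref{eq:nasty} hold; this is precisely $\lambda=\lambda_q$ of \eqref{eq:def_lam}, which gives $a^{\rm ML}$ as claimed, while $b^{\rm ML}$ is collected from the boundary regimes (e.g.\ $d=2\lambda\tau$). The main obstacle I anticipate is exactly this last bookkeeping: tracking the Scenario~2 case split and confirming that the single formula for $a^{\rm ML}$ correctly interpolates all regimes, including the nested $(\cdot)_+$ truncations and the role of the constraint forcing $\lambda=\lambda_q$.
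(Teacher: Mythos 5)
Your proposal follows essentially the same route as the paper: Theorem~\ref{thm:summary} is proved in \S\ref{ssec:SumCostErr} exactly by combining Theorems~\ref{thm:error-simp}, \ref{thm:weights} and \ref{thm:trunc-decay}, simplifying via $\varphi(N_\ell)\asymp N_\ell$ and the allocation \eqref{eq:def_N}, evaluating $\sum_\ell E_\ell$ regime by regime (including the split at $\ell=\lfloor L\xi/\eta\rfloor$ in the middle case of Scenario~2), and choosing $N_0$ by \eqref{eq:cond_N0}, and your check that the closed form for $a^{\rm ML}$ interpolates all regimes via the nested $(\cdot)_+$ is the right bookkeeping. One small correction: in Scenario~3 ($p=q$) the condition \eqref{eq:nasty} is not actually invoked, since the paper takes all $s_\ell$ equal there so that $\theta_{\ell-1}=0$ for $\ell\ge 1$ (Theorem~\ref{thm:trunc-decay} requires $p<q$ strictly and would give $\alpha=0$ otherwise).
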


In comparison, for the single level QMC FE algorithm in \cite{KSS1} to
achieve $\calO(\varepsilon)$ error, its overall cost in the case of $p<1$
is $\calO(\varepsilon^{-a^{\rm SL}})$, with
\begin{align*}
  a^{\rm SL}
  \,=\, \frac{p}{2-2p} + 2\lambda_p + \frac{d}{\tau}\;,
\end{align*}
see \cite[Theorem 8.1]{KSS1}, where $\lambda_p$ is defined analogously to
$\lambda_q$ as follows
\[
  \lambda_p \,:=\,
  \begin{cases}
  \displaystyle\frac{1}{2-2\delta} \quad\mbox{for some}\quad \delta\in (0,1/2) & \mbox{when } p\in (0,2/3]\;,
  \\
  \displaystyle\frac{p}{2-p} & \mbox{when } p\in (2/3,1)\;.
  \end{cases}
\]
Note that $a^{\rm ML}$ is much smaller than $a^{\rm SL}$ in most cases.
This is clearly seen when $\lambda_q \approx \lambda_p$. However, in the
extreme case where $\lambda_q$ and $\lambda_p$ are furthest apart, i.e,
$\lambda_q = 1$ and $\lambda_p \approx 1/2$, it is possible to come up
with an example where $a^{\rm SL} < a^{\rm ML}$: indeed, we could take
$d=1$, $\tau = 2$, $q = 1$ and $p=1/3$, which yield $a^{\rm SL} \approx
1.75$ while $a^{\rm ML} = 2$ under $k$-orthogonality. In a number of
examples it can be shown that $q = p/(1-p)$, in which case the requirement
that $q\le 1$ implies $p\le 1/2$, which is stronger than just $p\le 1$ as
required in the single level algorithm.

Now we compare with some multi-level MC and QMC works in the literature.
Sometimes ``finite-dimensional noise" is assumed, a feature we can mimic
by setting $p=q=0$ in our analysis, leading to $a^{\rm ML} = \max
(1/(1-\delta), d/\tau)$. In \cite{CST,CGST,TSGU}, multi-level MC FE
methods for elliptic PDEs \eqref{eq:PDE1} were analyzed, however with the
random coefficient \eqref{eq:defaxy} being lognormal, i.e., the
exponential of a stationary, Gaussian process.

In \cite{HMNR11} a class of abstract multi-level QMC algorithms for
infinite-dimensional integration was introduced, with a general cost model
for the evaluation of the integrand function. The multi-level structure in
that paper is different from ours: the key difference is that our
multi-level scheme must also incorporate the multi-level structure of the
FE discretizations. Also new is the necessity of considering `mixed'
regularity (in weighted reproducing kernel Hilbert spaces with respect to
the parameter sequence $\bsy$ and in the smoothness scale $Z^t$ with
respect to the spatial variable~$\bsx$).

In \cite{BSZ} a multi-level MC FE method with finite dimensional noise was
analyzed. It was shown there that in domains $D\subset \bbR^2$, a FE
approximation of the expectation of the random solution with the
convergence rate $\calO(h_L)$ {\em in the norm of $V$} (rather than for
linear functionals of the solution) can be computed in $\calO(M_{h_L}) =
\calO(h_L^{-2})$ work and memory, i.e., with the same cost as one
multi-level solution of the deterministic problem.

%\vspace{-0.3cm} % WARNING -- WARNING -- WARNING -- WARNING -- WARNING -- WARNING --
\section{Conclusion}
\label{sec:concl}
This paper introduces a multi-level QMC FE method, applied to functionals
of the solution of the same PDE with random coefficient problem as
considered by~\cite{CDS}. The same problem was studied by the present
authors in \cite{KSS1}, where we developed a single level QMC analysis
which yielded the same error bounds as in \cite{CDS} within the range of
convergence rates relevant to QMC. The probability model in these papers,
namely, independent and uniformly distributed parameters $y_j$, is
particularly simple and lends itself naturally to an error analysis by
QMC. The aim of the present multi-level version of the QMC approach is to
outline the design of a multilevel QMC FE Method which significantly
reduces the costs, while maintaining the fast convergence (compared to MC)
associated with QMC. We emphasize that the multi-level version requires a
new analysis, and in particular leads to a new prescription for the POD
weights (different from that in \cite{KSS1}) that determine the QMC rule.
Another difference is that the regularity requirements on the functions
$\psi_j$ are also more stringent than in the single level case.

The principal results for dimension $d=2$ are as follows. In Scenario 1
where $k$-orthogonality \eqref{eq:orthprop} holds, if we can choose
$t=t'=1$ so that $\tau = 2$, and can choose $\lambda =1/(2-2\delta)$ for
some $\delta\in (0,1/2)$, then the cost of the multi-level QMC FE
algorithm for computing the expectation of $G(u)$ is
$\calO(2^{2L/(1-\delta)}) = \calO(h_L^{2/(1-\delta)})$, while the
convergence rate is the (best possible) second order $\calO(2^{-2L}) =
\calO(h_L^2)$. This corresponds to optimal accuracy versus work bounds for
the computation of solution functionals in first order FE methods applied
to deterministic, $H^2$ regular, second order elliptic problems (see, e.g.
\cite{Ciarlet}). In contrast, multi-level MC FE methods such as those
analyzed in \cite{CST,CGST} cannot achieve optimal complexity for output
functionals for general, sufficiently regular covariances of the random
field $a(\bsx,\bsy)$, due to the maximal convergence rate $1/2$ of
standard MC methods.

As noted earlier, our cost model does not include the pre-computation cost
for the CBC construction of lattice rules. This is justified because the
same lattice rules can be used for the PDE problem with different forcing
terms $f$. However, as we are tailoring the choice of weights to the
problem, the cost of the CBC construction may be a significant issue.

The present analysis was performed under Lipschitz assumptions on $\psi_j$
and $\bar{a}$ in (\textbf{A4}) and (\textbf{A7}) which, together with
(\textbf{A6}) and the assumption that $G\in L^2(D)$, ensure in
\eqref{eq:defZ} that $Z = (H^1_0\cap H^2)(D)$ and, in turn, implies
$\calO(h^2)$ convergence in \eqref{eq:FE2}. The present convergence
analysis extends directly to weaker assumptions: if in (\textbf{A4}) and
(\textbf{A7}) we have only H\"older continuity $C^{0,r}(\overline{D})$ for
some $0<r<1$ instead of $W^{1,\infty}(D)$ regularity, or if $D$ is not
convex, then $\bar{b}_j$ in \eqref{eq:defbarbj} and
\eqref{eq:choiceweight-1} %and \eqref{eq:choiceweight-2}
will depend on $\|\psi_j\|_{C^{0,r}(\overline{D})}$ rather than on
$\|\psi_j\|_{W^{1,\infty}(D)}$.

In Theorems~\ref{thm:AN} and~\ref{thm:trunc-ML-new} we considered only the
weighted Sobolev space norm involving mixed first derivatives with respect
to $\bsy$, but Theorem~\ref{thm:reg} holds for higher order mixed
derivatives. The results here can be extended by considering higher order
QMC methods, see e.g.\ \cite[Chapter~15]{DiPi10}.

Finally, in our multi-level scheme we assumed that {\em exact
expectations} $\mathbb{E}[\cdot]$ over all realizations of random shifts
$\bsDelta_\ell\in [0,1]^{s_\ell}$ are available. In practical
realizations, these expectations must be approximated by MC estimates
$E_{m_\ell}[\cdot]$ based on a finite number $m_\ell$ of i.i.d.
realizations of the shift $\bsDelta_\ell$ at discretization level $\ell =
0,1,...,L$. This leads to a further error $(\mathbb{E} -
E_{m_\ell})[\cdot]$ in term $\ell$ of \eqref{eq:T2} of order
$\calO(m_\ell^{-1})$. We can maintain our error-versus cost estimates in
\S\ref{ssec:SumCostErr}, with the same choices of parameters $s_\ell$ and
$N_\ell$, by taking $m_\ell=m^*$ independent of~$\ell$, that is, a
level-independent, fixed number of random shifts $\bsDelta_\ell$ for each
level $\ell$. To provide a reasonable error estimate, our experience
(stemming, in part, from Monte-Carlo simulations) is that the number $m^*$
of realizations of random shifts needs to be of the order of $10$ to $30$.

\begin{acknowledgements}
The authors thank Mike Giles and Robert Scheichl for valuable discussions.
Frances Kuo was supported by an Australian Research Council QEII
Fellowship, an Australian Research Council Discovery Project, and the
Vice-Chancellor's Childcare Support Fund for Women Researchers at the
University of New South Wales. Christoph Schwab was supported by the Swiss
National Science Foundation under Grant No. 200021-120290/1, and by the
European Research Council under FP7 grant AdG247277. Ian Sloan was
supported by the Australian Research Council. Part of this work was
completed during the Hausdorff Research Institute for Mathematics
Trimester Program on Analysis and Numerics for High Dimensional Problems
in 2011.
\end{acknowledgements}
%-------------------------------------------------------------------

\end{document}